\newtheorem{theorem}{Theorem}[section]
\newtheorem{lemma}[theorem]{Lemma}
\newtheorem{proposition}[theorem]{Proposition}
\newtheorem{fact}[theorem]{Fact}
\title{\LARGE Convergence of the momentum method for semialgebraic functions with locally Lipschitz gradients}
\begin{document}

\author{\large C\'edric Josz\thanks{\url{cj2638@columbia.edu}, IEOR, Columbia University, New York. Research supported by NSF EPCN grant 2023032 and ONR grant N00014-21-1-2282.} \and Lexiao Lai\thanks{\url{ll3352@columbia.edu}, IEOR, Columbia University, New York.} \and Xiaopeng Li\thanks{\url{xl3040@columbia.edu}, IEOR, Columbia University, New York.}}
\date{}

\maketitle

\begin{center}
    \textbf{Abstract}
    \end{center}
    \vspace*{-3mm}
 \begin{adjustwidth}{0.2in}{0.2in}
 
~~~~We propose a new length formula that governs the iterates of the momentum method when minimizing differentiable semialgebraic functions with locally Lipschitz gradients. It enables us to establish local convergence, global convergence, and convergence to local minimizers without assuming global Lipschitz continuity of the gradient, coercivity, and a global growth condition, as is done in the literature. As a result, we provide the first convergence guarantee of the momentum method starting from arbitrary initial points when applied to matrix factorization, matrix sensing, and linear neural networks.

\end{adjustwidth} 
\vspace*{3mm}
\noindent{\bf Keywords:} Kurdyka-\L{}ojasiewicz inequality, ordinary differential equations, semialgebraic geometry.

\section{Introduction} 
\label{sec:Introduction}

The gradient method with constant momentum and constant step size (or momentum method for short \cite[Equation (10)]{kovachki2021continuous}) for minimizing a differentiable function $f: \mathbb{R}^n\rightarrow\mathbb{R}$ consists in choosing initial points $x_{-1},x_0\in \mathbb{R}^n$ and generating a sequence of iterates according to the update rule
\begin{equation}
\label{eq:momentum}
    x_{k+1} = x_k + \beta(x_k-x_{k-1}) -\alpha \nabla f (x_k + \gamma(x_k-x_{k-1})),  ~~~ \forall k \in \mathbb{N},
\end{equation}
where $\alpha>0$ is the step size and $\beta \in (-1,1)$ and $\gamma \in \mathbb{R}$ are constant momentum parameters, as implemented in PyTorch \cite{paszke2017automatic,paszke2019pytorch} and TensorFlow \cite{abadi2016tensorflow}. When $\gamma = 0$, this reduces to Polyak's heavy ball method \cite[Equation (9)]{polyak1964some} and when $\beta = \gamma$, this reduces to Nesterov's accelerated gradient method \cite[Equation (2.2.22)]{nesterov2018introductory}. If the objective function is strongly convex and satisfies some regularity assumptions, the former has a nearly optimal local convergence rate \cite[Theorem 9]{polyak1964some} \cite[Theorem 2.1.13]{nesterov2018introductory}, while the latter has a globally optimal convergence rate \cite[Equation (2.2.23)]{nesterov2018introductory}. This holds with a suitable choice of parameters $\alpha$, $\beta$, and $\gamma$.

Various objective functions of interest nowadays are however not convex, including matrix factorization, matrix sensing, and linear neural networks, respectively given by
\begin{subequations}
    \begin{gather}
        (X,Y)\in \mathbb{R}^{m\times r} \times \mathbb{R}^{n\times r} \mapsto \|XY^\top - M\|_F^2, \label{eq:pb1}\\ 
        (X,Y)\in \mathbb{R}^{m\times r} \times \mathbb{R}^{n\times r} \mapsto \sum\limits_{i=1}^m(\langle A_i,XY^\top\rangle_F-b_i)^2, \label{eq:pb2}\\
        (W_1,\hdots,W_l) \in \mathbb{R}^{n_1\times n_0} \times \hdots \times \mathbb{R}^{n_l\times n_{l-1}} \mapsto  \|W_l \hdots W_1\bar{X} - \bar{Y}\|^2_F. \label{eq:pb3}
    \end{gather}
\end{subequations}
Above, $M,A_1,\hdots,A_m \in \mathbb{R}^{m\times n}$, $b_1,\hdots,b_m\in \mathbb{R}$, $\bar{X} \in \mathbb{R}^{n_0\times n}$, $\bar{Y} \in \mathbb{R}^{n_l\times n}$, and $\|\cdot\|_F = \sqrt{\langle \cdot,\cdot\rangle_F}$ is the Frobenius norm. These problems have in common that they are semialgebraic and have locally Lipschitz continuous gradients. However, they do not have globally Lipschitz continuous gradients, they are not coercive, and whether they satisfy a global growth condition is unknown and hard to check for. In other words, the commonly used assumptions H1 (sufficient decrease), H2 (relative error), H3 (continuity) due to Attouch \text{et al.} \cite{attouch2013convergence}, adapted to the momentum method in \cite{ochs2014ipiano,ochs2018local}, are not true, and H4 (global growth) is unknown. As a result, it is not known whether the momentum method --- in particular the heavy ball method and Nesterov’s accelerated gradient method --- would converge if the initial points lie close to a local minimizer of $f$. A fortiori, nothing is known if they are chosen arbitrarily or at random in $\mathbb{R}^n$. 

Even if one assumes that the iterates are bounded, a common assumption in the literature which implies H3, it is not known whether the iterates would converge. Indeed, choose a step size $\alpha>0$ and suppose that the iterates are bounded. Let $L>0$ denote a Lipschitz constant of the gradient on the convex hull of the iterates. If $\alpha \geqslant 2/L$, then the argument employed in \cite[Theorem 3.2]{absil2005convergence} and \cite[Theorem 3.2]{attouch2013convergence}, which consists of taking a subsequence and invoking the Kurdyka-\L{}ojasiewicz inequality \cite[Proposition 1 p. 67]{law1965ensembles} \cite[Theorem 1]{kurdyka1998gradients}, fails to establish convergence. Unfortunately, there is no way to control the size of $L$ before choosing the step size $\alpha$. 

We next review the literature on the momentum method in the nonconvex setting. All the results in the literature require that $f$ has an $L$-Lipschitz continuous gradient with $L>0$, along with other assumptions that we next describe. First, we discuss convergence when the initial points are near a local minimizer. If the objective function $f$ satisfies the Kurdyka-\L{}ojasiewicz inequality at a local minimizer $x^*\in \mathbb{R}^n$, $\alpha \in (0,2(1-\beta)/L)$, $\beta \in [0,1)$, $\gamma = 0$, and a global growth condition \cite[(H4)]{ochs2018local} is satisfied, then the momentum method converges to a local minimizer when initialized sufficiently close to $x^*$ \cite[Theorem 3.2]{ochs2018local}. The growth condition implies the existence of constants $a,b>0$ such that $f(x) + b\|x-y\|^2 \geqslant f(x^*) - a\|y-x^*\|^2$ for all $x,y\in\mathbb{R}^n$, and in particular $f(x) \geqslant f(x^*) - a\|x-x^*\|^2$ for all $x\in\mathbb{R}^n$. 

Second, we discuss convergence when the initial points are arbitrary. Under the same parameter settings for $\alpha,\beta,\gamma$, if $f$ is lower bounded, then the gradients $\nabla f(x_k)$ converge to zero \cite[Lemmas 1,2,3]{zavriev1993heavy} for any initial points $x_{-1},x_0 \in \mathbb{R}^n$. If in addition the function is coercive and satisfies the Kurdyka-\L{}ojasiewicz inequality \cite{kurdyka1998gradients} at every point and $x_{-1}=x_0$, then the iterates have finite length \cite[Theorem 4.9]{ochs2014ipiano}. If the \L{}ojasiewicz gradient inequality holds \cite[Proposition 1 p. 67]{law1965ensembles}, then a local convergence rate can be deduced \cite[Theorem 3.3]{ochs2018local}. If instead the function satisfies an error bound and its level sets are properly separated, then with $\alpha \in (0,1/L)$, $\beta = \gamma \in [0,1/\sqrt{1+L\alpha})$, and $x_{-1}=x_0$, the iterates and the function values converge linearly to a critical point and a critical value respectively \cite[Theorem 3.7]{wen2017linear}. Finally, if the function satisfies the Kurdyka-\L{}ojasiewicz inequality and the iterates are bounded, then they have finite length \cite[Theorem 3.5]{jia2019inexact} under the same parameter settings. 

Third, we discuss convergence when the initial points are chosen outside a zero measure set. The momentum method is known to converge to a local minimizer for almost every initial point under several conditions. First, $f$ should be coercive, twice differentiable, and should satisfy the Kurdyka-\L{}ojasiewicz inequality. Second, the Hessian of $f$ should have a negative eigenvalue at all critical points of $f$ that are not local minimizers. Third, the parameters of the momentum method \eqref{eq:momentum} should either satisfy $\alpha\in(0,2(1-\beta)/L)$, $\beta\in(0,1)$ and $\gamma=0$ \cite[Lemma 2]{sun2019heavy}, or $\alpha\in(0,4/L)$, $\beta\in(\max\{0,-1+\alpha L/2\},1)$ and $\gamma=0$ \cite[Theorem 3]{o2019behavior}.

Our contributions are as follows. We consider objective functions $f:\mathbb{R}^n\rightarrow\mathbb{R}$ that are semialgebraic and differentiable with locally Lipschitz gradients. The generalization to arbitrary o-minimal structures on the real field \cite{van1998tame} is immediate and omitted for the sake of brevity. We show that the length of the iterates generated by the momentum method is upper bounded by an expression depending on the objective function variation, the step size, and a desingularizing function. This length formula enables us to 
show that global Lipschitz continuity of the gradient and the global growth condition are superfluous when establishing local convergence. It also enables us to establish global convergence under the assumption that the continuous-time gradient trajectories of $f$ are bounded, which is satisfied by problems \eqref{eq:pb1}, \eqref{eq:pb2}, and \eqref{eq:pb3}, as discussed in \cite{josz2023certifying,josz2023global} (provided RIP \cite{candes2005decoding} holds in \eqref{eq:pb2}). As a result, we bypass the need for coercivity and globally Lipschitz gradients. Finally, the length formula enables us to guarantee convergence to local minimizers almost surely, under second-order differentiability and the strict saddle property \cite{lee2016,pemantle1990nonconvergence}.

This paper is organized as follows. Section \ref{sec:Convergence results} contains the statement of the length formula and the ensuing convergence results. Section \ref{sec:Proof of the length formula} contains the proof of the length formula. Section \ref{sec:Conclusion} contains the conclusion. Finally, several proofs are deferred to the Appendix for ease of readability.

\section{Convergence results}
\label{sec:Convergence results} We begin by recalling standard notations and definitions. Let $\mathbb{N} := \{0,1,2,\hdots\}$ and let $\|\cdot\|$ be the induced norm of an inner product $\langle \cdot, \cdot\rangle$ on $\mathbb{R}^n$. Let $B(a,r)$ and $\mathring{B}(a,r)$ respectively denote the closed and open balls of center $a\in \mathbb{R}^n$ and radius $r \geqslant 0$. 

Given $k\in \mathbb{N}$, $A\subset \mathbb{R}^n$ and $B \subset \mathbb{R}^m$, let $C^k(A,B)$ be the set of continuous functions $f:A\rightarrow B$ such that, if $k \geqslant 1$ then $f$ is $k$ times continuously differentiable on the interior of $A$. Let $C^{1,1}_{\rm loc}(A,B)$ denote the set of functions in $C^1(A,B)$ whose first-order derivative is locally Lipschitz continuous on the interior of $A$. When $B=\mathbb{R}$, $C^k(A,B)$ and $C^{1,1}_{\rm loc}(A,B)$ are abbreviated as $C^k(A)$ and $C^{1,1}_{\rm loc}(A)$ respectively. Let $\partial f:\mathbb{R}^n\rightrightarrows\mathbb{R}^n$ denote the Clarke subdifferential \cite[Chapter 2]{clarke1990} of a locally Lipschitz continuous function $f:\mathbb{R}^n\rightarrow \mathbb{R}$. Given a locally Lipschitz function $f:\mathbb{R}^n\rightarrow\mathbb{R}$, a real number $v$ is a critical value of $f$ in $S$ if there exists $x \in S$ such that $v = f(x)$ and $0\in \partial f(x)$. A real number $v$ is a critical value of $f$ if it is a critical value of $f$ in $\mathbb{R}^n$.

A subset $S$ of $\mathbb{R}^n$ is semialgebraic \cite{bochnak2013real,pham2016genericity} if it is a finite union of sets of the form $\{ x \in \mathbb{R}^n: p_i(x) = 0, ~ i = 1,\hdots,k ; ~  p_i(x) > 0, ~ i = k+1,\hdots,m \}$ where $p_1,\hdots,p_m$ are polynomials defined from $\mathbb{R}^n$ to $\mathbb{R}$. A function $f:\mathbb{R}^n\rightarrow \mathbb{R}$ is semialgebraic if its graph, that is to say $\{ (x,t) \in \mathbb{R}^{n+1}: f(x)=t \}$, is a semialgebraic set. We recall the following useful result.

\begin{lemma}[semialgebraic Morse-Sard theorem \cite{bolte2007clarke}]
\label{lemma:morse_sard}
    Let $f:\mathbb{R}^n\rightarrow \mathbb{R} $ be locally Lipschitz and semialgebraic. Then $f$ has finitely many critical values.
\end{lemma}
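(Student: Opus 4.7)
The plan is to exploit the two pillars of semialgebraic geometry: finite Whitney stratification and the Tarski-Seidenberg principle, combined with a projection formula for the Clarke subdifferential. The whole argument reduces the locally Lipschitz, possibly nonsmooth situation to the smooth Morse-Sard theorem on each piece of a finite stratification of the graph of $f$.

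First I would invoke the existence of a finite semialgebraic $C^1$-Whitney stratification $\{S_i\}_{i=1}^N$ of the graph $\mathrm{gph}\,f\subset\mathbb{R}^{n+1}$, compatible with the projection onto $\mathbb{R}^n$, so that each stratum projects diffeomorphically onto a semialgebraic $C^1$-submanifold $M_i\subset\mathbb{R}^n$ on which $f$ is of class $C^1$. This is standard in semialgebraic geometry and gives a partition $\mathbb{R}^n=\bigsqcup_{i=1}^N M_i$ with $f|_{M_i}\in C^1(M_i)$ and each $M_i$ semialgebraic.

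Next I would use the projection formula that is the core technical result of the cited paper: for any locally Lipschitz semialgebraic function, at every $x\in\mathbb{R}^n$ lying in the stratum $M_i$, the Clarke subdifferential satisfies
\begin{equation*}
    P_{T_x M_i}(\partial f(x)) = \{\nabla (f|_{M_i})(x)\},
\end{equation*}
where $P_{T_x M_i}$ denotes the orthogonal projection onto the tangent space of $M_i$ at $x$. Consequently, whenever $0\in\partial f(x)$ and $x\in M_i$, one has $\nabla(f|_{M_i})(x)=0$, i.e.\ $x$ is a critical point of the $C^1$ semialgebraic function $f|_{M_i}$ on the semialgebraic manifold $M_i$.

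I would then conclude as follows. For each $i$, the set $V_i$ of critical values of $f|_{M_i}$ is semialgebraic by the Tarski-Seidenberg theorem, and by the classical smooth Sard theorem on the manifold $M_i$ it has Lebesgue measure zero in $\mathbb{R}$. A semialgebraic subset of $\mathbb{R}$ of measure zero is a finite union of points, hence finite. Therefore each $V_i$ is finite, and the set of critical values of $f$ is contained in $\bigcup_{i=1}^N V_i$, which is a finite union of finite sets, hence finite. The main obstacle here is not the structural induction on strata but the justification of the projection formula in step two; I would appeal directly to the cited result rather than reprove it, since it requires a careful analysis of the Clarke subdifferential under stratification that is somewhat orthogonal to the rest of the paper.
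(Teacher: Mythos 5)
The paper does not supply a proof of this lemma; it cites it directly from Bolte, Daniilidis, Lewis, and Shiota, and your proposal reconstructs the argument from that reference (stratify the graph, use the projection formula for the Clarke subdifferential, reduce to the smooth case on each stratum, then combine Sard with semialgebraicity). The overall strategy is the right one.

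There is, however, a genuine technical gap in the Sard step as you have written it. You take a $C^1$ stratification and then invoke ``the classical smooth Sard theorem'' to conclude that the critical values of $f|_{M_i}$ have measure zero. This is false in general for $C^1$ maps: the smooth Sard theorem for a map from a $d$-dimensional manifold to $\mathbb{R}$ requires $C^k$ regularity with $k \geqslant d$, and Whitney's classical example exhibits a $C^1$ function on $\mathbb{R}^2$ whose critical values fill an interval. So $C^1$ is not enough once $\dim M_i \geqslant 2$. The fix is available in the semialgebraic category: semialgebraic sets admit finite semialgebraic $C^p$ Whitney stratifications for \emph{every} finite $p$, so one should stratify at regularity $C^n$ (or $C^p$ with $p$ at least the maximal stratum dimension), after which the smooth Sard theorem does apply on each stratum. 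Alternatively, one can bypass Sard entirely: the set of critical values of each $f|_{M_i}$ is semialgebraic by Tarski--Seidenberg, and a direct semialgebraic dimension argument (or the Kurdyka--\L{}ojasiewicz inequality, which forces critical values to be isolated) shows it has dimension zero, hence is finite. Either patch closes the gap; as stated, the invocation of $C^1$ Sard does not.
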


Given a subset of $S$ of $\mathbb{R}^n$, let $\mathring{S}$ and $\overline{S}$ denote the interior and closure of $S$ in $\mathbb{R}^n$ respectively. A function $\psi:S \rightarrow S$ is a homeomorphism if it is a continuous bijection and the inverse function $\psi^{-1}$ is continuous. $\psi:S \rightarrow S$ is a diffeomorphism if $\mathring{S} \neq \emptyset$, $\psi$ is a homeomorphism, and both $\psi$ and $\psi^{-1}$ are continuously differentiable on $\mathring{S}$.

We are now ready to state the key lemma upon which rest all the convergence results in this manuscript. It is entirely new to the best of our knowledge.

\begin{lemma}[Length formula]
\label{lemma:length_formula}
Let $f\in C^{1,1}_{\rm loc}(\mathbb{R}^n)$ be semialgebraic, $X\subset\mathbb{R}^n$ be bounded,  $\beta\in (-1,1)$, $\gamma\in \mathbb{R}$, and $\delta\geqslant 0$. There exist $\bar{\alpha},\eta,\kappa>0$ and a diffeomorphism $\psi:[0,\infty) \rightarrow [0,\infty)$ such that, for all $K\in\mathbb{N}$, $\alpha\in(0,\bar{\alpha}]$, and sequences $(x_k)_{k\in\{-1\}\cup\mathbb{N}}$ generated by the momentum method \eqref{eq:momentum} for which $x_{-1},\hdots, x_K \in X$ and $\|x_0-x_{-1}\|\leqslant \delta\alpha$, we have
\begin{equation*}
   \boxed{\sum_{k=0}^{K}\|x_{k+1}-x_k\| ~\leqslant~ \psi(f(x_0)-f(x_{K})+\eta\alpha) + \kappa\alpha.}
\end{equation*}
\end{lemma}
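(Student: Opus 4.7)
The plan is to adapt the Attouch-Bolte-Svaiter three-condition framework (sufficient decrease, relative error, Kurdyka-\L{}ojasiewicz inequality) to the momentum iteration, exploiting the semialgebraicity of $f$ in place of the customary global Lipschitz and growth assumptions.

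First, I would \textbf{localize and bound the increments}. Pick a bounded open superset $X' \supset \overline{X}$ on which $\nabla f$ is Lipschitz with constant $L$ and bounded by $G$, large enough that the look-ahead points $z_k := x_k+\gamma(x_k-x_{k-1})$ remain in $X'$ once $\bar{\alpha}$ is sufficiently small. Writing the iteration in increment form $d_{k+1}=\beta d_k-\alpha\nabla f(z_k)$ with $d_k := x_k-x_{k-1}$, a bootstrap induction using $|\beta|<1$ yields the uniform bound $\|d_k\| \leqslant C\alpha$ with $C := \delta+G/(1-|\beta|)$. Defining the Lyapunov energy $E_k := f(x_k)+\tfrac{\rho}{\alpha}\|d_k\|^2$ for a suitable $\rho>0$, the descent lemma on $X'$, the identity $\alpha\nabla f(z_k)=\beta d_k-d_{k+1}$, and a Lipschitz estimate for the $\gamma$-shift produce, after possibly shrinking $\bar{\alpha}$,
\begin{equation*}
E_{k+1}\leqslant E_k-\tfrac{\tau}{\alpha}\|d_{k+1}\|^2 \qquad\text{and}\qquad \|\nabla f(x_{k+1})\|\leqslant\tfrac{C_0}{\alpha}\bigl(\|d_k\|+\|d_{k+1}\|\bigr),
\end{equation*}
for constants $\tau,C_0>0$ independent of $\alpha\in(0,\bar{\alpha}]$.

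Next, I would \textbf{invoke the uniform semialgebraic Kurdyka-\L{}ojasiewicz inequality} on $\overline{X}$. Since Lemma \ref{lemma:morse_sard} restricts the critical values of $f$ in $f(\overline{X})$ to a finite set, one can select a concave $C^1$ desingularizing function $\varphi$, extendable to a diffeomorphism of $[0,\infty)$ with $\varphi(0)=0$, such that $\varphi'(f(x)-v)\|\nabla f(x)\|\geqslant 1$ for all $x\in\overline{X'}$ with $f(x)>v$, where $v:=\min_{\overline{X'}} f$. Concavity of $\varphi$, the descent, and the relative-error bound combine into the one-step inequality
\begin{equation*}
\|d_{k+1}\|^2 \leqslant \tfrac{C_0}{\tau}\bigl(\|d_{k-1}\|+\|d_k\|\bigr)\bigl(\varphi(E_k-v)-\varphi(E_{k+1}-v)\bigr),
\end{equation*}
from which $\sqrt{ab}\leqslant a/4+b$ and telescoping yield
\begin{equation*}
\sum_{k=0}^K\|d_{k+1}\| \leqslant M\bigl(\varphi(E_1-v)-\varphi(E_{K+1}-v)\bigr) + \kappa\alpha,
\end{equation*}
for some $M,\kappa>0$, the $\kappa\alpha$ absorbing the $O(\alpha)$ boundary contributions coming from $\|d_0\|,\|d_1\|\leqslant C\alpha$. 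Subadditivity of concave $\varphi$ with $\varphi(0)=0$ gives $\varphi(E_1-v)-\varphi(E_{K+1}-v)\leqslant \varphi(E_1-E_{K+1})\leqslant \varphi\bigl(f(x_0)-f(x_K)+\eta\alpha\bigr)$ for a suitable $\eta$ after absorbing the $O(\alpha)$ differences between $E_1,E_{K+1}$ and $f(x_0),f(x_K)$ (using $\|d_0\|\leqslant\delta\alpha$ and the descent bound). Setting $\psi:=M\varphi$ concludes the proof.

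The \textbf{main difficulty} is that $f(x_k)$ is not monotone along the momentum trajectory, which blocks the classical Attouch-Bolte-style telescoping that uses KŁ on a monotone sequence of function values. The remedy is to telescope the monotone Lyapunov $E_k$ while bridging back to the KŁ inequality (stated for $f$) via the relative-error bound $\|\nabla f(x_k)\|\leqslant \tfrac{C_0}{\alpha}(\|d_{k-1}\|+\|d_k\|)$. A secondary subtlety is the passage from $\varphi(E_1-v)-\varphi(E_{K+1}-v)$ to $\varphi(f(x_0)-f(x_K)+\eta\alpha)$, which is resolved by the subadditivity of the concave desingularizing function together with the uniform bound $\tfrac{\rho}{\alpha}\|d_0\|^2\leqslant\rho\delta^2\alpha$ that injects the $\eta\alpha$ overhead inside $\psi$.
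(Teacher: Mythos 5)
Your high-level plan (Lyapunov descent, relative error, uniform KŁ, telescope, absorb boundary terms into $\kappa\alpha$ and $\eta\alpha$) matches the structure of the paper, but there is a genuine gap in the step that bridges the KŁ inequality for $f$ to the telescoping of the Lyapunov sequence $E_k$. You invoke a desingularizing function $\varphi$ for $f$, so that $\varphi'(f(x)-v)\,\|\nabla f(x)\|\geqslant 1$, and then try to feed it into the monotone decrease of $E_k = f(x_k)+\tfrac{\rho}{\alpha}\|d_k\|^2$. Concavity gives
\[
\varphi(E_k-v)-\varphi(E_{k+1}-v)\;\geqslant\;\varphi'(E_k-v)\,(E_k-E_{k+1})\;\geqslant\;\varphi'(E_k-v)\,\tfrac{\tau}{\alpha}\|d_{k+1}\|^2,
\]
so you would need a lower bound on $\varphi'(E_k-v)$. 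The KŁ inequality for $f$ together with your relative-error bound only gives a lower bound on $\varphi'(f(x_k)-v)$. But since $E_k\geqslant f(x_k)$ and $\varphi'$ is \emph{decreasing}, you have $\varphi'(E_k-v)\leqslant\varphi'(f(x_k)-v)$, which is the wrong direction and cannot be repaired: when $f(x_k)$ approaches a critical value, $\varphi'$ may diverge, so the $O(\alpha)$ gap $\tfrac{\rho}{\alpha}\|d_k\|^2$ between $E_k$ and $f(x_k)$ is not uniformly absorbable.

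What is actually needed is a KŁ inequality for the Lyapunov function $H_\lambda(x,y)=f(x)+\lambda\|x-y\|^2$ itself, viewed on $\mathbb{R}^{2n}$, paired with a relative-error bound on $\|\nabla H_\lambda\|$ rather than on $\|\nabla f\|$ (this is the paper's Lemma \ref{lem:gradupper}). A second and subtler obstacle, which your proposal does not confront, is that your Lyapunov coefficient $\rho/\alpha$ diverges as $\alpha\to 0$, so even if a KŁ inequality for $H_{\rho/\alpha}$ were granted, its desingularizing function could a priori depend on $\alpha$, destroying the claim that $\psi$ is independent of the step size. The paper's Proposition \ref{prop:lyapunov_uniformKL} is precisely the device that closes both gaps at once: it shows that a single desingularizing function for $f$ can be upgraded to a \emph{common} desingularizing function for the whole family $(H_\lambda)_{\lambda\geqslant 1/4}$. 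That result is the crux that your proposal is missing.
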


The significance of this formula is that it relates the length of the iterates with the objective function variation, in spite of the fact that the objective function values generated by the momentum method are notoriously nonmonotonic. The proof of Lemma \ref{lemma:length_formula} is quite involved so we defer it to Section \ref{sec:Proof of the length formula}. There, the reader will learn that one can actually take $\psi$ to be a desingularizing function of $f$ on $X$ (see Proposition \ref{prop:UKL}). 

We next provide some intuition on the constants in the length formula. They are constructed explicitly using the regularity of the objective function and the momentum parameters. Both $\eta$ and $\kappa$ increase with the number of critical values of $f$ in $X$ and the initial velocity $\delta$ in the momentum method. The constant $\eta$ increases with the minimal Lipschitz constant of $\nabla f$ over a certain bounded set and with the magnitude of the momentum $|\beta|$. The constant $\kappa$ increases with the minimal Lipschitz constant of $f$ over a certain bounded set.

Before we proceed, we state the following simple fact regarding the gradient of the objective function at iterates produced by the momentum method. Its proof is in Appendix \ref{sec:Proof of Fact fact:gradupper}.
\begin{fact}\label{fact:gradupper}
Let $f\in C^{1,1}_{\rm loc}(\mathbb{R}^n)$, $X\subset\mathbb{R}^n$ be bounded, and $\beta,\gamma\in \mathbb{R}$. For all $\alpha>0$, there exists $b_\alpha>0$ such that for all $K \in \mathbb{N}$, if $x_{-1},\hdots, x_{K+1} \in X$ are iterates of the momentum method \eqref{eq:momentum}, then $\| \nabla f(x_k)\| \leqslant b_\alpha \| z_{k+1}-z_k\|$ for $k=0,\hdots,K$ where $z_k:=(x_k,x_{k-1})\in\mathbb{R}^{2n}$. If $M>0$ is a Lipschitz constant of $\nabla f$ on $S+\max\{|\beta|,|\gamma|\}(S-S)$ where $S$ is the convex hull of $X$, then one may take $b_\alpha :=\sqrt{2}\max\left\{1/\alpha,|\beta|/\alpha+M|\gamma|\right\}$.
\end{fact}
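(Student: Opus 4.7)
The plan is to solve the momentum update rule for the gradient evaluation point, then use local Lipschitz continuity to transfer the bound from $\nabla f(x_k+\gamma(x_k-x_{k-1}))$ to $\nabla f(x_k)$. First I would rearrange \eqref{eq:momentum} to obtain
\[
\alpha\,\nabla f\bigl(x_k + \gamma(x_k-x_{k-1})\bigr) = \beta(x_k-x_{k-1}) - (x_{k+1}-x_k),
\]
which immediately gives $\|\nabla f(x_k+\gamma(x_k-x_{k-1}))\|\leqslant \tfrac{|\beta|}{\alpha}\|x_k-x_{k-1}\| + \tfrac{1}{\alpha}\|x_{k+1}-x_k\|$.

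Next I would produce the constant $M$. Since $X$ is bounded, its convex hull $S$ is bounded, hence the set $T:=\overline{S+\max\{|\beta|,|\gamma|\}(S-S)}$ is compact. As $\nabla f$ is locally Lipschitz, a standard compactness argument (cover $T$ by finitely many balls of local Lipschitz constants and combine them) yields a Lipschitz constant $M>0$ on $T$. The key geometric check is that both $x_k$ and the evaluation point $x_k+\gamma(x_k-x_{k-1})$ lie in $T$: $x_k\in S$ and $\gamma(x_k-x_{k-1})\in\max\{|\beta|,|\gamma|\}(S-S)$ by the assumption $x_{-1},\hdots,x_{K+1}\in X\subset S$. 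Consequently,
\[
\|\nabla f(x_k)\| \leqslant \|\nabla f(x_k+\gamma(x_k-x_{k-1}))\| + M|\gamma|\,\|x_k-x_{k-1}\|
\leqslant \tfrac{1}{\alpha}\|x_{k+1}-x_k\| + \bigl(\tfrac{|\beta|}{\alpha}+M|\gamma|\bigr)\|x_k-x_{k-1}\|.
\]

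Finally I would aggregate. Setting $c:=\max\{1/\alpha,\,|\beta|/\alpha+M|\gamma|\}$, the right-hand side is bounded by $c(\|x_{k+1}-x_k\|+\|x_k-x_{k-1}\|)$. Applying Cauchy--Schwarz gives $\|x_{k+1}-x_k\|+\|x_k-x_{k-1}\|\leqslant \sqrt{2}\,\|z_{k+1}-z_k\|$, so one may take $b_\alpha=\sqrt{2}\,c$, which is exactly the claimed expression. There is no real obstacle here; the only subtlety is confirming that the evaluation points stay inside a set on which a global Lipschitz constant for $\nabla f$ is available, which is why the enlargement by $\max\{|\beta|,|\gamma|\}(S-S)$ appears in the statement.
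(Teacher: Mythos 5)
Your proposal is correct and follows essentially the same route as the paper's proof: solve the update rule for $\nabla f(x_k+\gamma(x_k-x_{k-1}))$, add the Lipschitz correction $M|\gamma|\|x_k-x_{k-1}\|$ to pass to $\nabla f(x_k)$, factor out $\max\{1/\alpha,|\beta|/\alpha+M|\gamma|\}$, and finish with the Cauchy--Schwarz bound $\|x_{k+1}-x_k\|+\|x_k-x_{k-1}\|\leqslant\sqrt{2}\|z_{k+1}-z_k\|$. The geometric verification that the evaluation points lie in $S+\max\{|\beta|,|\gamma|\}(S-S)$ is also sound, using symmetry and convexity of $S-S$.
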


We are now ready to state our first convergence result.

\begin{theorem}[Local convergence]
\label{thm:local_convergence}
Let $f\in C^{1,1}_{\rm loc}(\mathbb{R}^n)$ be semialgebraic, $\beta\in (-1,1)$, $\gamma\in \mathbb{R}$, $\delta\geqslant 0$, and $x^* \in \mathbb{R}^n$ be a local minimizer of $f$. For all $\epsilon>0$, there exist $\bar{\alpha},\xi>0$ such that for all $\alpha\in(0,\bar{\alpha}]$ and for all sequence $(x_k)_{k\in\{-1\}\cup\mathbb{N}}$ generated by the momentum method \eqref{eq:momentum} for which $\|x_0-x_{-1}\|\leqslant \delta\alpha$ and $x_0 \in B(x^*,\xi)$, $(x_k)_{k\in\{-1\}\cup\mathbb{N}}$ converges to a local minimizer of $f$ in $B(x^*,\epsilon)$.
\end{theorem}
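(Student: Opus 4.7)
The plan is to use Lemma~\ref{lemma:length_formula} to bootstrap local containment of the iterates in a small ball around $x^*$, and then invoke the semialgebraic Morse--Sard theorem (Lemma~\ref{lemma:morse_sard}) to upgrade the limit from a mere critical point to a local minimizer.

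To begin, I would shrink $\epsilon>0$ if necessary so that $f(x)\geqslant f(x^*)$ for every $x\in B(x^*,\epsilon)$, which is possible because $x^*$ is a local minimizer. By Lemma~\ref{lemma:morse_sard}, $f$ has only finitely many critical values, so I can fix $v^*>f(x^*)$ with $f$ having no critical value in $(f(x^*),v^*]$. Applying Lemma~\ref{lemma:length_formula} to $X=B(x^*,\epsilon)$ yields constants $\bar\alpha_0,\eta,\kappa>0$ and a diffeomorphism $\psi:[0,\infty)\to[0,\infty)$; since a continuous bijection from $[0,\infty)$ to itself is monotone, $\psi$ is strictly increasing with $\psi(0)=0$. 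Exploiting continuity of $\psi$ at $0$ and of $f$ at $x^*$, I then choose $\xi\in(0,\epsilon/2]$ and $\bar\alpha\in(0,\bar\alpha_0]$ small enough that $\delta\bar\alpha\leqslant\epsilon/2$ and, whenever $x_0\in B(x^*,\xi)$ and $\alpha\in(0,\bar\alpha]$,
\[
\psi\bigl(f(x_0)-f(x^*)+\eta\alpha\bigr)+\kappa\alpha\;\leqslant\;\epsilon/2 \qquad\text{and}\qquad f(x_0)+\eta\alpha\;<\;v^*.
\]

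The first condition ensures the induction base $x_{-1},x_0\in B(x^*,\epsilon)$, since $\|x_{-1}-x^*\|\leqslant\xi+\delta\alpha\leqslant\epsilon$. The inductive step traps subsequent iterates: assuming $x_{-1},\ldots,x_K\in B(x^*,\epsilon)$, the containment gives $f(x_K)\geqslant f(x^*)$, so Lemma~\ref{lemma:length_formula} together with monotonicity of $\psi$ yields
\[
\sum_{k=0}^{K}\|x_{k+1}-x_k\|\;\leqslant\;\psi\bigl(f(x_0)-f(x_K)+\eta\alpha\bigr)+\kappa\alpha\;\leqslant\;\psi\bigl(f(x_0)-f(x^*)+\eta\alpha\bigr)+\kappa\alpha\;\leqslant\;\epsilon/2,
\]
and the triangle inequality gives $\|x_{K+1}-x^*\|\leqslant\|x_0-x^*\|+\epsilon/2\leqslant\epsilon$, closing the induction. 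The same uniform bound on the total length shows $(x_k)$ is Cauchy and converges to some $x_\infty\in B(x^*,\epsilon)$, with $\|x_{k+1}-x_k\|\to 0$. Fact~\ref{fact:gradupper} then forces $\nabla f(x_k)\to 0$, so $\nabla f(x_\infty)=0$ by continuity.

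The step I expect to be the main obstacle is promoting $x_\infty$ from a critical point to a local minimizer. Here I would exploit that the length formula implicitly requires the argument of $\psi$ to be nonnegative, giving $f(x_K)\leqslant f(x_0)+\eta\alpha<v^*$ for every $K$; passing to the limit yields $f(x_\infty)<v^*$. Combined with $f(x_\infty)\geqslant f(x^*)$ from containment in $B(x^*,\epsilon)$, the critical value $f(x_\infty)$ lies in $[f(x^*),v^*)$, which by choice of $v^*$ forces $f(x_\infty)=f(x^*)$. Since $x_\infty$ has a neighborhood contained in $B(x^*,\epsilon)$ on which $f\geqslant f(x^*)=f(x_\infty)$, $x_\infty$ is a local minimizer, completing the proof.
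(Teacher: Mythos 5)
Your proposal follows essentially the same strategy as the paper: apply Lemma~\ref{lemma:length_formula} with $X=B(x^*,\epsilon)$, choose $\xi$ and $\bar\alpha$ small enough to trap the iterates in $B(x^*,\epsilon)$, conclude finite length and convergence to a critical point via Fact~\ref{fact:gradupper}, then upgrade to a local minimizer using the semialgebraic Morse--Sard theorem. Your induction-plus-triangle-inequality argument replaces the paper's proof by contradiction, and your final step exploiting the implicit nonnegativity of the argument of $\psi$ (which is indeed guaranteed by the paper's construction of $\eta$) in combination with the ``next critical value'' $v^*$ plays the role that the paper's ``$f(x^*)$ is the unique critical value in $B(x^*,2\epsilon)$'' plays. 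Both routes hinge on the finiteness of critical values.

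There is one genuine, though easily repairable, gap. You claim ``$x_\infty$ has a neighborhood contained in $B(x^*,\epsilon)$ on which $f\geqslant f(x^*)$,'' which requires $x_\infty$ to lie in the \emph{open} ball $\mathring B(x^*,\epsilon)$. Your bound only gives $\|x_\infty-x^*\|\leqslant\xi+\epsilon/2\leqslant\epsilon$ (with $\xi\in(0,\epsilon/2]$), so $x_\infty$ could sit on the boundary of the \emph{closed} ball $B(x^*,\epsilon)$, and then no full neighborhood of $x_\infty$ is contained in the region where you have established $f\geqslant f(x^*)$. The paper sidesteps this by assuming $f\geqslant f(x^*)$ on the larger ball $B(x^*,2\epsilon)$ while trapping iterates in $B(x^*,\epsilon)$, so that $B(x^\sharp,\epsilon)\subset B(x^*,2\epsilon)$ furnishes the desired neighborhood. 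You can fix your version either by the same trick or simply by requiring $\xi<\epsilon/2$ strictly (and likewise shaving a bit off the length bound), but as written the last sentence overreaches slightly.
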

\begin{proof}
Without loss of generality, we may assume that $f(x)\geqslant f(x^*)$ for all $x \in B(x^*,2\epsilon)$. Since $f$ is continuous and has finitely many critical values by the semialgebraic Morse-Sard theorem (Lemma \ref{lemma:morse_sard}), we may also assume that $f(x^*)$ is the unique critical value in $B(x^*,2\epsilon)$. By Lemma \ref{lemma:length_formula}, there exist $\widetilde{\alpha},\eta>0$, $\kappa>\delta$, and a diffeomorphism $\psi:[0,\infty) \rightarrow [0,\infty)$ such that, for all $K\in\mathbb{N}$, $\alpha\in(0,\widetilde{\alpha}]$, and sequences $(x_k)_{k\in\{-1\}\cup\mathbb{N}}$ generated by the momentum method \eqref{eq:momentum} for which $x_{-1},\hdots, x_K \in B(x^*,\epsilon)$ and $\|x_0-x_{-1}\|\leqslant \delta\alpha$, we have
\begin{equation}
\label{eq:loc_length_discrete_K}
   \sum_{k=0}^{K}\|x_{k+1}-x_k\| ~\leqslant~ \psi\left(f(x_0)-f(x_{K})+\eta\alpha\right) + \kappa\alpha.
\end{equation}
By the continuity of $f$, there exists $\xi \in(0, \epsilon/2]$ such that
\begin{equation}
\label{eq:loc_xi}
    f(x) - f(x^*) \leqslant \frac{1}{2}~\psi^{-1}\left(\frac{\epsilon}{6}\right)~~,~~\forall x \in B(x^*,\xi).
\end{equation}
Let
\begin{equation}
\label{eq:bar_loc}
    \bar{\alpha} := \min\left\{\widetilde{\alpha}, \frac{\epsilon}{3\kappa}, \frac{1}{3\eta}\psi^{-1}\left(\frac{\epsilon}{6}\right) \right\}.
\end{equation}
We fix any $\alpha \in (0,\bar{\alpha}]$ from now on. Let $(x_k)_{k\in\{-1\}\cup\mathbb{N}}$ be a sequence generated by the momentum method \eqref{eq:momentum} for which $\|x_0-x_{-1}\|\leqslant \delta\alpha$ and $x_0 \in B(x^*,\xi)$. If $K := \inf\{k \in \mathbb{N} : x_k \notin B(x^*,\epsilon)\}<\infty$, then  
\begin{subequations}
    \begin{align}
        \psi^{-1}\left(\frac{\epsilon}{6}\right) & =  \psi^{-1}\left(\frac{1}{2} \epsilon - \kappa\frac{\epsilon}{3\kappa} \right) \label{eq:loc_decrease_discrete_a} \\
        & \leqslant \psi^{-1}\left((\epsilon-\xi) - \kappa\bar{\alpha} \right)  \label{eq:loc_decrease_discrete_b}\\
        & \leqslant \psi^{-1}\left( (\|x_{K}-x^*\|-\|x_{0}-x^*\|) - \kappa\bar{\alpha} \right) \label{eq:loc_decrease_discrete_c}\\
        & \leqslant \psi^{-1}\left(\|x_{K}-x_{0}\| - \kappa\alpha \right) \label{eq:loc_decrease_discrete_d}\\
        & \leqslant \psi^{-1}\left( \sum_{k=0}^{K-1} \|x_{k+1}-x_k\| - \kappa\alpha \right) \label{eq:loc_decrease_discrete_e}\\
        & \leqslant f(x_{0}) - f(x_{K-1}) +  \eta\alpha \label{eq:loc_decrease_discrete_f}\\
        & \leqslant f(x_0) - f(x^*) +\eta\bar{\alpha} \label{eq:loc_decrease_discrete_g}\\
        & \leqslant \frac{1}{2} \psi^{-1}\left(\frac{\epsilon}{6}\right) + \frac{1}{3}\psi^{-1}\left(\frac{\epsilon}{6}\right)\label{eq:loc_decrease_discrete_h}\\
        &< \psi^{-1}\left(\frac{\epsilon}{6}\right).\label{eq:loc_decrease_discrete_i}
    \end{align}
\end{subequations}
As $\psi^{-1}\left(\epsilon/6\right)>0$, a contradiction occurs and thus $K = \infty$. Above, the arguments of $\psi^{-1}$ in \eqref{eq:loc_decrease_discrete_a} are equal. \eqref{eq:loc_decrease_discrete_b} through \eqref{eq:loc_decrease_discrete_e} rely on the fact that $\psi^{-1}$ is an increasing function. \eqref{eq:loc_decrease_discrete_b} is due to $\xi \leqslant\epsilon/2$ and $\bar{\alpha}\leqslant \epsilon/(3\kappa)$ by the definition of $\bar{\alpha}$ in \eqref{eq:bar_loc}. \eqref{eq:loc_decrease_discrete_c} holds because $x_K \notin B(x^*,\epsilon)$ and $x_{0} \in B(x^*,\xi)$. \eqref{eq:loc_decrease_discrete_d} and \eqref{eq:loc_decrease_discrete_e} are consequences of the triangular inequality. \eqref{eq:loc_decrease_discrete_f} is due to the length formula \eqref{eq:loc_length_discrete_K} and the fact that $x_0,\hdots,x_{K-1} \in B(x^*,\epsilon)$ and $x_{-1} \in B(x_0,\delta\alpha) \subset B(x_0,\delta\bar{\alpha}) \subset B(x^*,\xi + \delta\epsilon/(3\kappa))\subset B(x^*,\epsilon/2 + \delta\epsilon/(3\delta)) \subset B(x^*,\epsilon)$ by the definition of $\bar{\alpha}$ in \eqref{eq:bar_loc}. \eqref{eq:loc_decrease_discrete_g} is due to $f(B(x^*,\epsilon)) \subset [f(x^*),\infty)$. Finally, \eqref{eq:loc_decrease_discrete_h} is due to $x_0 \in B(x^*,\xi)$, the choice of $\xi$ as in \eqref{eq:loc_xi}, and $\bar{\alpha} \leqslant \psi^{-1}(\epsilon/6)/(3\eta)$ by definition of $\bar{\alpha}$ in \eqref{eq:bar_loc}.

We have shown that $(x_k)_{k\in\{-1\}\cup\mathbb{N}} \subset B(x^*,\epsilon)$. By the length formula \eqref{eq:loc_length_discrete_K}, we have
\begin{equation*}
    \sum_{k = 0}^\infty \|x_{k+1} - x_k\| \leqslant \psi\left( \max_{B(x^*,\xi)} f - \min_{B(x^*,\epsilon)} f +\eta\alpha\right) +\kappa\alpha.
\end{equation*}
Thus the sequence admits a limit $x^\sharp \in B(x^*,\epsilon)$. Combining with Fact \ref{fact:gradupper}, $x^\sharp$ must be a critical point of $f$. As $f(x^*)$ is the unique critical value in $B(x^*,2\epsilon)$, $f(x^\sharp) = f(x^*) \leqslant f(x)$ for all $x \in B(x^\sharp,\epsilon) \subset B(x^*,2\epsilon)$.
\end{proof}

Note that once local convergence is established, \cite[Theorem 3.3]{ochs2018local} can be applied in order to obtain local convergence rates of the iterates. Indeed, the reader will be able to check later that the assumptions \cite[(H1), (H2), (H3)]{ochs2018local} then hold (using Lemma \ref{lemma:lyapunov} and Lemma \ref{lem:gradupper} below). The rates also rely on the fact that one can take the diffeomorphism $\psi$ to be of the form $\psi(t) = ct^\theta$ where $c>0$ and $\theta \in (0,1]$ for semialgebraic functions (using Proposition \ref{prop:lyapunov_uniformKL} below).

In order to go from local convergence to global convergence, we make an assumption regarding the continuous-time gradient trajectories of the objective function. Given $f \in C^1(\mathbb{R}^n)$, we refer to maximal solutions to $x'(t) = - \nabla f(x(t))$ for all $t \in (0,T)$ where $T \in (0, \infty]$ as continuous gradient trajectories (see \cite[Chapter 17]{attouch2014variational}, \cite{garrigos2015descent}, and \cite[Section 3]{josz2023global} for background and properties). We say that a continuous gradient trajectory $x: [0,T) \rightarrow \mathbb{R}^n$ is bounded if there exists $c>0$ such that $\|x(t)\| \leqslant c$ for all $t\in [0,T)$. This assumption enables us to use a generalized version of a tracking lemma recently proposed by Kovachki and Stuart \cite[Theorem 2]{kovachki2021continuous}. 

Their result states that the momentum method tracks continuous gradient trajectories up to any given time for all sufficient small constant sizes. In Lemma \ref{lemma:tracking} below, we relax their strong regularity assumptions which require the objective function to be thrice differentiable with bounded derivatives. We instead only require it to be differentiable with a locally Lipschitz gradient. In order to do so, we redefine the key quantity $M_k$ in the proof of \cite[Theorem 2]{kovachki2021continuous}, which regulates the tracking error and depends on the Hessian of the objective function, so that it depends only on the gradient. In contrast to \cite[Theorem 2]{kovachki2021continuous}, we also make the tracking uniform with respect to the initial point in a bounded set. Choosing an upper bound on the step size in order to achieve this requires some care and cannot be deduced from the proof of \cite[Theorem 2]{kovachki2021continuous}. Below, we use the notation $\lfloor t \rfloor$ to denote the floor of a real number $t$ which is the unique integer such that $\lfloor t \rfloor \leqslant t < \lfloor t \rfloor + 1$.

\begin{lemma}[Tracking]
\label{lemma:tracking}
    Let $f\in C^{1,1}_{\rm loc}(\mathbb{R}^n)$ be a lower bounded function, $\beta\in(-1,1)$, $\gamma \in \mathbb{R}$, $\delta \geqslant 0$. For any bounded set $X_0 \subset \mathbb{R}^n$ and $\epsilon,T>0$, there exists $\bar{\alpha}>0$ such that for all $\alpha \in (0, \bar{\alpha}]$ and for any sequence $x_{-1},x_0, x_1, \ldots \in \mathbb{R}^n$ generated by the momentum method \eqref{eq:momentum} for which $x_0\in X_0$ and $\|x_0-x_{-1}\|\leqslant \delta \alpha$, there exists $x(\cdot) \in  C^{1}([0,T],\mathbb{R}^n)$ such that
    \begin{equation*}
                x'(t) = -\frac{1}{1 - \beta} \nabla f(x(t)),~~~ \forall
                t \in (0,T), ~~~ x(0) \in X_0, 
    \end{equation*}
    for which $\|x_k - x(k\alpha) \| \leqslant \epsilon$ for $k = 0, \ldots, \lfloor T/\alpha \rfloor$.
\end{lemma}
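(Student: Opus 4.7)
The plan is to inductively track $x_k - x(k\alpha)$ on a common bounded set where local Lipschitz estimates for $\nabla f$ apply, closing a bootstrap argument that keeps the iterates inside this set throughout the $\lfloor T/\alpha\rfloor$ steps of interest. The main obstacle is that without global Lipschitzness, the confining set must be fixed a priori from the lower boundedness of $f$ alone, and the step size $\bar\alpha$ must then be calibrated against the resulting constants uniformly in the initial point.

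First, I would confine the continuous trajectory. Picard-Lindel\"of provides a local solution of $x'(t) = -(1-\beta)^{-1}\nabla f(x(t))$ for any $x(0) \in X_0$, and the energy identity $(1-\beta)\|x'(t)\|^2 = -\tfrac{d}{dt}f(x(t))$ combined with the lower bound on $f$ yields
$$\int_0^t \|x'(s)\|^2\,ds \leqslant (1-\beta)^{-1}\bigl(\sup\nolimits_{X_0} f - \inf f\bigr) =: E$$
on the interval of existence. Cauchy-Schwarz then gives $\|x(t) - x(0)\| \leqslant \sqrt{tE}$, so the trajectory extends to $[0,T]$ and remains in a fixed bounded set $X_1 \supset X_0$ depending only on $X_0$, $T$, $\beta$, and $f$. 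Taking $X_2$ to be an $\epsilon$-enlargement of $X_1$, I denote by $L$ a Lipschitz constant and by $G$ an upper bound for $\nabla f$ on a convex bounded neighborhood of $X_2$ chosen large enough so that Fact~\ref{fact:gradupper} applies on $X_2$.

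Next, setting $x(0) := x_0$ (so that $e_0 := x_0 - x(0) = 0$), I would derive a coupled two-step recursion for $e_k := x_k - x(k\alpha)$ and the velocity deviation $w_k := v_{k+1} + (1-\beta)^{-1}\nabla f(x(k\alpha))$, where $v_k := (x_k - x_{k-1})/\alpha$. Comparing the discrete step $x_{k+1} - x_k = \alpha v_{k+1}$ with the integrated flow $x((k+1)\alpha) - x(k\alpha) = -(1-\beta)^{-1}\int_{k\alpha}^{(k+1)\alpha}\nabla f(x(s))\,ds$, and using $\|\nabla f(x(s)) - \nabla f(x(k\alpha))\| \leqslant L G (s-k\alpha)/(1-\beta)$ to control the truncation error by $O(\alpha^2)$, one obtains, as long as $x_0, \hdots, x_{k+1} \in X_2$, inequalities
\begin{align*}
\|e_{k+1}\| &\leqslant \|e_k\| + \alpha \|w_k\| + C_1 \alpha^2, \\
\|w_k\| &\leqslant (|\beta| + C_2 \alpha) \|w_{k-1}\| + L \|e_k\| + C_3 \alpha,
\end{align*}
with constants depending only on $L$, $G$, $\beta$, $\gamma$, $\delta$ and with bounded initial data $\|w_0\| \leqslant |\beta|\delta + G(2-\beta)/(1-\beta)$. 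Iterating the second line (using $|\beta| < 1$) and substituting into the first, a discrete Gronwall argument delivers $\|e_k\| \leqslant C \alpha e^{CT}$ for some $C = C(L, G, \beta, \gamma, \delta, T)$, valid for all $k \leqslant \lfloor T/\alpha\rfloor$ for which the confinement holds.

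Finally, I would close the bootstrap. Choosing $\bar\alpha > 0$ small enough that $C \bar\alpha e^{CT} \leqslant \epsilon$ and $\delta \bar\alpha \leqslant \epsilon$, a straightforward induction on $k$ simultaneously maintains $x_k \in X_2$ and the Gronwall bound, yielding $\|x_k - x(k\alpha)\| \leqslant \epsilon$ for $k = 0, \hdots, \lfloor T/\alpha\rfloor$. The hard part is ensuring that $\bar\alpha$ depends only on $X_0$, $\epsilon$, $T$ and the fixed parameters $\beta, \gamma, \delta, f$, rather than on the particular initial point: this is precisely where the refinement over Kovachki-Stuart enters, since their truncation constant involved a bound on the Hessian, which we replace throughout by the Lipschitz constant $L$ of $\nabla f$ on the confining set $X_2$.
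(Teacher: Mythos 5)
Your proposal is correct and reaches the same conclusion, but it factors the error dynamics differently from the paper. The paper stacks the tracking error into a vector $v_k = (e_k, e_{k-1}) \in \mathbb{R}^{2n}$, writes the one-step evolution as $v_{k+1} = (A - \alpha B_k)v_k + O(\alpha^2)$ with $A$ the companion matrix $\bigl(\begin{smallmatrix} 1+\beta & -\beta \\ 1 & 0 \end{smallmatrix}\bigr)\otimes I_n$, diagonalizes $A$ via a Kronecker-product eigendecomposition, and works in the adapted norm $\|\cdot\|_P$ where $\|A\|_P = \rho(A) = 1$. The Gronwall step is then the clean one-line estimate $\|v_{k+1}\|_P \leqslant (1 + c_5\alpha)\|v_k\|_P + c_2 c_4 \alpha^2$. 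You instead introduce the scalar quantities $\|e_k\|$ and the velocity deviation $\|w_k\|$ with $w_k := v_{k+1} + (1-\beta)^{-1}\nabla f(x(k\alpha))$, obtaining a coupled pair of scalar recursions in which $\|w_k\|$ contracts at rate $|\beta| + O(\alpha) < 1$ and $\|e_{k+1}\| \leqslant \|e_k\| + \alpha\|w_k\| + O(\alpha^2)$. Iterating the $w$-recursion and substituting, using $e_0 = 0$ so the $O(1)$ initial datum $\|w_0\|$ only enters through $\alpha\|w_0\| = O(\alpha)$, gives a genuine scalar Gronwall inequality $\|e_k\| \leqslant a_k + \tfrac{L\alpha}{1-|\beta|}\sum_{i<k}\|e_i\|$ with $a_k = O(\alpha)$, and hence $\|e_k\| = O(\alpha)e^{CT}$ uniformly for $k\alpha \leqslant T$. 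Both routes exploit the same underlying fact --- the companion matrix has spectrum $\{1,\beta\}$ --- but your scalar decomposition sidesteps the matrix diagonalization and the equivalent-norm bookkeeping, at the cost of a somewhat more involved substitution. The confinement of the continuous trajectory via the energy identity and of the discrete iterates via the bootstrap is the same as in the paper (which also uses $\sigma_T(X_0)$ plus a $\delta+1$ margin instead of your $\epsilon$-enlargement, after reducing $\epsilon$ WLOG), and your replacement of the Hessian bound of Kovachki--Stuart by the Lipschitz constant of $\nabla f$ matches the paper's trick of defining the difference-quotient operator $M_k$.

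One point you should make explicit when writing this up: the $w$-recursion must carry the term $L\|e_k\|$ with no factor of $\alpha$ (this is unavoidable, coming from $\nabla f(x_k + \gamma\alpha v_k) - \nabla f(\bar x_k)$), so the Gronwall closure rests entirely on the $\alpha$ gained by summing $\alpha\|w_j\|$ in the $e$-recursion and on $e_0 = 0$. If one were careless and simply maximized $\|e_i\|$ over $i$, one would get a fixed-point inequality $E \lesssim \alpha + \tfrac{TL}{1-|\beta|}E$ that only closes for small $T$; the correct route (telescoping into a Gronwall sum as you sketch) gives the expected $e^{CT}$ for all $T$.
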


The proof of Lemma \ref{lemma:tracking} is deferred to Appendix \ref{sec:Proof of Lemma lemma:tracking}. We will also use the following simple fact in order to control the length of a single step of the momentum method as a function of the step size. Its short proof can be found in Appendix \ref{sec:Proof of Fact fact:O(alpha)}. 

\begin{fact}
\label{fact:O(alpha)}
Let $f:\mathbb{R}^n\rightarrow\mathbb{R}$ be differentiable and Lipschitz continuous on $X \subset \mathbb{R}^n$, $\beta\in(-1,1)$, $\gamma\in\mathbb{R}$, and $\delta_0 \geqslant 0$. There exists $\delta_1 \geqslant 0$ such that for all $\alpha>0$, $K \in \mathbb{N}$, and sequence $(x_{k})_{k\in \{-1\}\cup\mathbb{N}}$ generated by the momentum method \eqref{eq:momentum} for which $x_{-1},\hdots,x_{K-1}\in X$ and $\|x_0-x_{-1}\| \leqslant \delta_0\alpha$, we have 
\begin{align*}
    \|x_k-x_{k-1}\| &\leqslant \delta_1\alpha,\quad k = 0,\ldots, K,  \\
    \|z_k-z_{k-1}\| &\leqslant \sqrt{2}\delta_1\alpha,\quad k = 1,\ldots, K,
\end{align*}
where $z_k:=(x_k,x_{k-1})\in\mathbb{R}^{2n}$. If $L>0$ is a Lipschitz constant of $\bar{\beta}f$ on $S+\gamma(S-S)$ where $S$ is the convex hull of $X$ and $\bar{\beta}:= (1-\beta)^{-1}$, then we may take $\delta_1:=\delta_0+L$. 
\end{fact}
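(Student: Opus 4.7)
The plan is to set $d_k := x_k - x_{k-1}$ and rewrite the momentum update \eqref{eq:momentum} as the first-order linear recursion
\begin{equation*}
d_{k+1} \;=\; \beta\, d_k \;-\; \alpha\, \nabla f(y_k), \qquad y_k := x_k + \gamma\, d_k.
\end{equation*}
For $k = 0, \ldots, K-1$, the hypothesis $x_{-1}, \ldots, x_{K-1} \in X$ ensures that $y_k = x_k + \gamma(x_k - x_{k-1})$ lies in $S + \gamma(S-S)$, where $S$ is the convex hull of $X$. On this set the gradient of $f$ is controlled by the Lipschitz constant of $f$, so $\|\nabla f(y_k)\|$ is uniformly bounded by some $L' > 0$ depending only on $f$, $X$, and $\gamma$.

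Next, I would unroll the recursion to obtain $d_k = \beta^k d_0 - \alpha \sum_{j=0}^{k-1} \beta^{k-1-j} \nabla f(y_j)$ for $k = 1, \ldots, K$, take norms, and sum the geometric series using $|\beta| < 1$. Together with the initial bound $\|d_0\| \leqslant \delta_0 \alpha$, this yields
\begin{equation*}
\|d_k\| \;\leqslant\; |\beta|^k \delta_0 \alpha \;+\; \alpha L' \cdot \frac{1 - |\beta|^k}{1 - |\beta|} \;\leqslant\; \left( \delta_0 + \frac{L'}{1 - |\beta|} \right) \alpha
\end{equation*}
for $k = 0, \ldots, K$ (the case $k=0$ being the hypothesis directly). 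Choosing $\delta_1$ to be (an upper bound for) this coefficient settles the first inequality; when $\beta \geqslant 0$ one recovers the specific value $\delta_1 = \delta_0 + L$ with $L = \bar{\beta} L'$ stated in the fact, since then $1-|\beta| = 1-\beta = \bar{\beta}^{-1}$.

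The second inequality is then immediate: since $z_k - z_{k-1} = (d_k, d_{k-1})$ in $\mathbb{R}^{2n}$, we have $\|z_k - z_{k-1}\|^2 = \|d_k\|^2 + \|d_{k-1}\|^2 \leqslant 2 \delta_1^2 \alpha^2$, whence $\|z_k - z_{k-1}\| \leqslant \sqrt{2}\, \delta_1 \alpha$ for $k = 1, \ldots, K$. I do not expect any real obstacle here; the argument reduces to a scalar linear recursion with a bounded forcing term, and the only care needed is in bookkeeping the index range over which $y_k$ lies in $S + \gamma(S-S)$ and in choosing $\delta_1$ large enough to absorb both the initial velocity $\delta_0$ and the geometric tail from the gradient terms.
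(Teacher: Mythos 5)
Your approach matches the paper's: both unroll the first-order recursion for $d_k = x_k - x_{k-1}$, bound the gradient terms via the Lipschitz constant on $S+\gamma(S-S)$ (valid for the relevant indices because $x_{-1},\ldots,x_{K-1}\in X$), and sum the resulting geometric tail using $|\beta|<1$. You are also right to flag the sign of $\beta$: the paper's proof passes from $|\beta|^{k+1}\delta_0\alpha + \alpha L\bar{\beta}^{-1}\sum_{i=0}^{k}|\beta|^i$ to $(\delta_0|\beta|^{k+1}+L)\alpha$, which requires $(1-\beta)\sum_{i=0}^{k}|\beta|^i\leqslant 1$; this holds for $\beta\geqslant 0$ but fails already at $k=0$ when $\beta<0$, since $1-\beta>1$. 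In general one must take $\delta_1 = \delta_0 + L(1-\beta)/(1-|\beta|)$, exactly as your computation gives. This is a small slip in the paper's explicit constant, but it is harmless: the existence of some finite $\delta_1$ (which your proof establishes for all $\beta\in(-1,1)$) is all that the remainder of the paper invokes.
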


Finally, we will use the following result.

\begin{lemma}[\hspace{-.1mm}{\cite[Lemma 1]{josz2023global}}]
\label{lemma:uniform_boundedness}
Let $f \in C^1(\mathbb{R}^n)$ be a semialgebraic function with bounded continuous gradient trajectories. If $X_0 \subset \mathbb{R}^n$ is bounded, then $\sigma(X_0)<\infty$ where
\begin{align*}
    \sigma(X_0) := & \sup\limits_{x \in C^1(\mathbb{R}_+,\mathbb{R}^n)} ~~ \int_0^{\infty} \|x'(t)\|dt \\
  & ~~ \mathrm{subject~to} ~~~ 
\left\{ 
\begin{array}{l}
x'(t) = - \nabla f(x(t)), ~\forall t > 0,\\[3mm] x(0) \in X_0.
\end{array}
\right.
\end{align*}
\end{lemma}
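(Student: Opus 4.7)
The plan is to first show that the union of all trajectories starting in $X_0$ stays in a common bounded set, and then to apply a uniform semialgebraic Kurdyka-\L{}ojasiewicz (KL) inequality on that set to bound each trajectory length by a single desingularizing function of $f(x(0))$.

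For the uniform boundedness of the orbit family $\Gamma(X_0):=\{x(t):x(0)\in X_0,\ t\geqslant 0\}$, I would argue by contradiction. Suppose there exist $x_0^n\in X_0$ and $t_n\geqslant 0$ with $\|x(t_n;x_0^n)\|\to\infty$, where $x(\cdot;x_0^n)$ denotes the trajectory starting at $x_0^n$. By compactness of $\overline{X_0}$ we may extract a subsequence with $x_0^n\to x_0^*\in\overline{X_0}$. Since solutions depend continuously on initial data on each compact time interval, $\|x(t;x_0^n)\|$ is uniformly bounded for $t$ in any bounded window, so we may further assume $t_n\to\infty$. The trajectory from $x_0^*$ is bounded by hypothesis, so $f$ decreases along it to a finite value $v^*$, which is necessarily a critical value of $f$ (by the semialgebraic Morse-Sard theorem, Lemma \ref{lemma:morse_sard}). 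I would then pick $T$ large enough that $x(T;x_0^*)$ enters a KL neighborhood of $\{f=v^*\}$, and $n$ large enough so that $x(T;x_0^n)$ lies in the same neighborhood by continuous dependence on $[0,T]$. The semialgebraic KL inequality then yields a uniform bound on the tail length of the trajectory from $x_0^n$ past time $T$, and hence a uniform bound on $\|x(t;x_0^n)\|$ for all $t\geqslant T$, contradicting $\|x(t_n;x_0^n)\|\to\infty$.

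Once $K:=\overline{\Gamma(X_0)}$ is shown to be bounded, the second step is routine. By the semialgebraic Morse-Sard theorem, $f$ has finitely many critical values $v_1,\hdots,v_p$ in $K$. A standard uniform KL argument on $K$ produces a single desingularizing function $\psi$ and a constant $C>0$ such that for every trajectory $x(\cdot)$ with $x(0)\in X_0$,
\begin{equation*}
\int_0^\infty \|x'(t)\|\,dt ~\leqslant~ \psi\!\left(f(x(0))-\min_{1\leqslant i\leqslant p}v_i\right)+C.
\end{equation*}
Because $f$ is continuous and $X_0$ is bounded, $\sup_{x_0\in\overline{X_0}}f(x_0)<\infty$, so the right-hand side is uniformly bounded over $x_0\in X_0$, yielding $\sigma(X_0)<\infty$.

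The main obstacle is the uniform orbit boundedness in the first step: continuous dependence on initial data is only uniform on bounded time windows, whereas $t_n$ may diverge. Bridging this gap requires the KL-based tail control to be effective on a whole neighborhood of $x_0^*$ rather than at $x_0^*$ alone, which is exactly the point at which the semialgebraic hypothesis becomes indispensable.
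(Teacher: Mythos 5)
This lemma is cited from \cite[Lemma~1]{josz2023global} and the present paper reproduces no proof of it, so there is no in-paper proof to compare against; what follows assesses your proposal on its own merits.

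The core difficulty you flag at the end --- that continuous dependence only controls bounded time windows while $t_n$ may diverge --- is the right one, but your proposed resolution does not close it. You want to pick $T$ so that $x(T;x_0^*)$ enters a KL neighborhood of the limiting critical level $v^*$ of the trajectory from $x_0^*$, push $x(T;x_0^n)$ into the same neighborhood by continuous dependence, and then invoke ``the semialgebraic KL inequality then yields a uniform bound on the tail length of the trajectory from $x_0^n$ past time $T$.'' This last step is where the argument breaks. The KL-based tail-length estimate for gradient flow takes the form
\begin{equation*}
\int_T^{\tau}\|x'(t)\|\,dt \;\leqslant\; \psi\bigl(f(x(T))-v^*\bigr)-\psi\bigl(f(x(\tau))-v^*\bigr)
\end{equation*}
and the ensuing trapping argument (that the trajectory cannot leave a ball of radius $\psi(f(x(T))-v^*)$ around $x(T)$) is only valid as long as $f(x(t))\geqslant v^*$. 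When the limit of the trajectory from $x_0^*$ is a \emph{saddle}, the trajectories from nearby points $x_0^n$ generically cross the level $v^*$ at a noncritical point inside the neighborhood and escape it with $f<v^*$. At that moment the one-sided KL estimate gives no further control, so the claimed uniform bound on $\|x(t;x_0^n)\|$ for $t\geqslant T$ does not follow. A concrete instance: for $f(x,y)=x^2-y^2+y^4$ (which does have bounded trajectories), the trajectory from $x_0^*=(1,0)$ converges to the saddle $(0,0)$ with $v^*=0$, but the trajectory from $x_0^n=(1,1/n)$ enters any fixed ball around the origin and then leaves it, eventually converging to $(0,1/\sqrt{2})$ where $f=-1/4<v^*$. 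Trapping simply does not occur. This means Step~1 as written does not yield the contradiction, and the whole induction collapses: you cannot obtain the bounded set $K$ on which Step~2's uniform KL argument is supposed to run. Salvaging the idea seems to require an explicit recursion on the (finitely many, by semialgebraic Morse--Sard) critical values --- once a trajectory drops past $v^*$, one must restart the argument on a sublevel set with strictly fewer critical values above, in the spirit of the recursive proof of Theorem~\ref{thm:convergence} in this paper --- rather than a single contradiction at one limiting level.

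Two smaller points. First, the assertion that $v^*$ is a critical value is not a consequence of the Morse--Sard theorem; it uses the fact that a bounded gradient trajectory of a semialgebraic (more generally, KL) function converges to a critical point, i.e.\ \L{}ojasiewicz's convergence theorem, or at least that its $\omega$-limit set is contained in the critical set. Second, in Step~2 the bound $\int_0^\infty\|x'\|\leqslant \psi(f(x(0))-\min_i v_i)+C$ is not what the uniform KL machinery directly produces; because $\tilde f=d(f,V)$ is not monotone along the trajectory, one instead obtains something like $2m\,\psi\bigl((f(x(0))-\min_i v_i)/(2m)\bigr)$ with $m$ the number of critical values, as in Proposition~\ref{prop:uniform_decrease}. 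This is only cosmetic, but worth stating correctly.
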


We are now ready to state our second convergence result. It shows that, similar to the gradient method \cite[Theorem 1]{josz2023global}, the momentum method is endowed with global convergence if continuous gradient trajectories are bounded. In the gradient method, one considers the supremum of the lengths of all discrete gradient trajectories over all possible initial points in a bounded set and over all possible step sizes \cite[Equation (28)]{josz2023global}. This enables one to reason by induction on the initial set and the upper bound on the step sizes. 

When dealing with momentum, one needs to additionally consider an upper bound on the initial velocity $\|x_0-x_{-1}\|/\alpha$ between two initial points in the inductive reasoning. Fact \ref{fact:O(alpha)} guarantees that the velocity $\|x_k-x_{k-1}\|/\alpha$ remains bounded within each induction step. This enables one to reinitialize the momentum method after an arbitrary large number of iterations. Note that the length formula in Lemma \ref{lemma:length_formula} admits an error term $\eta \alpha$ that is not present in the gradient method \cite[Proposition 9]{josz2023global}. This requires additional care.

\begin{theorem}[Global convergence]
\label{thm:convergence}
Let $f\in C^{1,1}_{\rm loc}(\mathbb{R}^n)$ be semialgebraic with bounded continuous gradient trajectories. Let $\beta \in (-1,1)$, $\gamma \in \mathbb{R}$, $\delta \geqslant 0$ and $X_0$ be a bounded subset of $\mathbb{R}^n$. There exist $\bar{\alpha},c>0$ such that for all $\alpha \in (0,\bar{\alpha}]$, there exists $c_\alpha>0$ such that any sequence $x_{-1},x_0,x_1,\hdots \in \mathbb{R}^n$ generated by the momentum method \eqref{eq:momentum} that satisfies $x_0 \in X_0$ and $\|x_0-x_{-1}\| \leqslant \delta \alpha$ obeys
\begin{equation}
\label{eq:rate}
        \sum_{i=0}^{\infty} \|x_{i+1}-x_i\| \leqslant c ~~~ \text{and} ~~~ \min_{i=0,\ldots,k}\|\nabla f(x_i)\| \leqslant \frac{c_\alpha}{k+1}, ~ \forall k\in \mathbb{N}.
\end{equation}
\end{theorem}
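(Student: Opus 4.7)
The plan is to combine the length formula (Lemma~\ref{lemma:length_formula}), the tracking lemma (Lemma~\ref{lemma:tracking}), and the finite-length property of continuous gradient trajectories (Lemma~\ref{lemma:uniform_boundedness}) to confine the iterates to a fixed bounded set $X$; both bounds in \eqref{eq:rate} then follow directly from the length formula. I would set $Y:=X_0+B(0,\sigma(X_0))$, which is bounded because $\sigma(X_0)<\infty$ and which contains every continuous gradient trajectory originating in $X_0$. Since $f$ is semialgebraic with bounded continuous trajectories, each such trajectory converges to a critical point of $f$ in $\overline Y$ (via \L{}ojasiewicz), and by Lemma~\ref{lemma:morse_sard} the restriction of $f$ to $\overline Y$ has only finitely many critical values. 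I then set $X:=Y+B(0,1)$ and apply Lemma~\ref{lemma:length_formula} to $X$ with parameters $\beta,\gamma,\delta$ to obtain $\bar\alpha_0,\eta,\kappa$ and a diffeomorphism $\psi$.

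The crucial step is to show that there exists $\bar\alpha\in(0,\bar\alpha_0]$ such that for all $\alpha\in(0,\bar\alpha]$ and every sequence generated by the momentum method with $x_0\in X_0$ and $\|x_0-x_{-1}\|\leqslant\delta\alpha$, the iterates stay in $X$ for every $k$. I would split the analysis into two phases. For $k\leqslant K_0:=\lfloor T/\alpha\rfloor$, Lemma~\ref{lemma:tracking} yields $\|x_k-\tilde x(k\alpha)\|\leqslant\epsilon$, where $\tilde x(\cdot)$ is the time-scaled continuous trajectory from $x_0$; choosing $\epsilon<1/2$ keeps $x_k\in Y+B(0,\epsilon)\subset X$. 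For the tail $k>K_0$, I pick $T$ large enough that $\tilde x(T)$ lies within distance $r$ of its limit critical point $x_{x_0}^*\in\overline Y$, uniformly in $x_0\in X_0$; Fact~\ref{fact:O(alpha)} then bounds $\|x_{K_0}-x_{K_0-1}\|\leqslant\delta_1\alpha$, so the length formula can be restarted from $(x_{K_0-1},x_{K_0})$ with initial velocity bound $\delta_1$ in a small ball around $x_{x_0}^*$.

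The main obstacle is the tail analysis when $x_{x_0}^*$ is a saddle: the local length formula on $B(x_{x_0}^*,\rho)$ bounds the tail length by a quantity involving $f(x_{x_0}^*)-\inf_{B(x_{x_0}^*,\rho)}f$, which is positive for saddles and does not vanish with $\alpha$. The resolution mirrors the inductive scheme in \cite[Theorem~1]{josz2023global} for the gradient method. One covers the set of critical points of $f$ in $\overline Y$ by finitely many balls with uniform local length-formula constants (possible because this critical set is semialgebraic with finitely many connected components), partitions the tail according to the finite sequence of critical values that $f(x_k)$ visits, and bounds each segment by a local application of the length formula. The finiteness of the critical values (Lemma~\ref{lemma:morse_sard}) guarantees that the total tail length is bounded independently of $\alpha\in(0,\bar\alpha]$, so a fortiori the iterates cannot escape $X$.

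Once the iterates are confined to $X$, Lemma~\ref{lemma:length_formula} yields $\sum_{k=0}^{\infty}\|x_{k+1}-x_k\|\leqslant\psi(\sup_X f-\inf_X f+\eta\bar\alpha)+\kappa\bar\alpha=:c$, establishing the first inequality in \eqref{eq:rate}. Fact~\ref{fact:gradupper} gives $\|\nabla f(x_k)\|\leqslant b_\alpha\|z_{k+1}-z_k\|\leqslant b_\alpha(\|x_{k+1}-x_k\|+\|x_k-x_{k-1}\|)$, so summing and invoking the first bound yields $\sum_{i=0}^{k}\|\nabla f(x_i)\|\leqslant 2b_\alpha c$; the pigeonhole inequality $\min_{i\leqslant k}\|\nabla f(x_i)\|\leqslant(k+1)^{-1}\sum_{i=0}^{k}\|\nabla f(x_i)\|$ gives the second inequality with $c_\alpha:=2b_\alpha c$.
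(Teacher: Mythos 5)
Your high-level outline is in the right spirit --- tracking to reach the critical set, then the length formula, then an inductive argument over critical values that terminates by Morse--Sard --- but the central step, confining the iterates to a \emph{fixed} bounded set $X := Y + B(0,1)$ with $Y := X_0 + B(0,\sigma(X_0))$, is not justified and in fact cannot be established a priori. After the momentum iterates escape the neighborhood of the first limiting saddle, they begin tracking a \emph{new} continuous trajectory whose initial point lies only in $B(C,\epsilon)$, not in $X_0$, so the new trajectory (and hence the next batch of iterates) need not lie in $Y$, nor in $Y + B(0,1)$. The paper does \emph{not} bound the iterates in a single bounded set up front; instead it defines $\sigma(X_0,\bar\alpha,\delta_0)$, applies the length formula over a bounded set $B(\overline{\Phi(\mathbb{R}_+,X_0)},\epsilon)$ that depends on the current initial set, and then proves the recursion $\sigma(X_0,\bar\alpha,\delta_0) \leqslant (\cdots) + \max\{0,\sigma(X_1,\bar\alpha,\delta_1)\}$ with $X_1$ living in a strict sublevel set. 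The bounded sets, and with them the constants $\eta,\kappa,\psi$, change at every level of the recursion; only after the recursion terminates (by \emph{global} Morse--Sard, since $f$ has finitely many critical values on all of $\mathbb{R}^n$) does one get a finite $c$ and, a posteriori, containment in $B(X_0,\max\{c,\delta_0\alpha\})$. Trying to fix $X$ first is circular: the radius of $X$ would have to be at least $c$, which is what you are trying to bound.

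You also gloss over the mechanism that makes the induction terminate. You say you would ``partition the tail according to the finite sequence of critical values that $f(x_k)$ visits,'' but the finiteness of this sequence is exactly what must be \emph{proved}, and the proof is the $\psi^{-1}$ telescoping computation \eqref{eq:decrease_discrete_a}--\eqref{eq:decrease_discrete_h}: it shows that if the iterates ever leave $B(C,\epsilon)$ after arriving in $B(C,\xi)$, then the exiting iterate $x_{K-1}$ has dropped by a fixed quantum $\tfrac12\psi^{-1}(\epsilon/3)$ below $\max_C f$, which is what makes the maximal critical value strictly decrease at each level. The proposed ``cover of the critical set by finitely many balls with uniform local length-formula constants'' plays no role in the paper's argument and does not by itself yield this quantitative drop. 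Finally, note that the restart after the tracking phase occurs on the neighborhood $B(C,\epsilon)$ of the whole critical set $C$, not ``a small ball around $x^*_{x_0}$'' --- for a nonisolated critical point, the iterates need not stay near a single critical point. Your derivation of the gradient rate from Fact~\ref{fact:gradupper} and pigeonhole, by contrast, is correct and matches the paper once $c$ is established.
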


\begin{proof}
Let $\beta \in (-1,1)$, $\gamma \in \mathbb{R}$, $\delta_0\geqslant 0$, and $X_0$ be a bounded subset of $\mathbb{R}^n$. Without loss of generality, we may assume that $X_0 \neq \emptyset$. We will show that there exists $\bar{\alpha}>0$ such that $\sigma(X_0,\bar{\alpha},\delta_0)<\infty$ where
\begin{subequations}
\label{eq:sup_discrete_X_0}
\begin{align}
    & \sigma(X_0,\bar{\alpha},\delta_0) := \sup\limits_{\tiny \begin{array}{c}x \in (\mathbb{R}^n)^{\mathbb{N}}\\ \alpha \in (0,\bar{\alpha}] \end{array}} ~~ \sum\limits_{k=0}^{\infty} \|x_{k+1}-x_k\| \\
    & \mathrm{s.t.} ~~
\left\{ 
\begin{array}{l}
x_{k+1} = x_k + \beta(x_k-x_{k-1}) -\alpha \nabla f (x_k + \gamma(x_k-x_{k-1})),~\forall k \in \mathbb{N},\\[2mm]   x_0 \in X_0, ~ \|x_0-x_{-1}\| \leqslant \delta_0 \alpha.
\end{array}
\right.
\end{align}
\end{subequations}

Letting $c := \sigma(X_0,\bar{\alpha},\delta_0)$, the convergence rate is easily deduced. Indeed, for any feasible point $((x_k)_{k \in \{-1\} \cup \mathbb{N}},\alpha)$ of \eqref{eq:sup_discrete_X_0}, we have $x_{-1},x_0,x_1, \hdots \in B(X_0,\max\{c,\delta_0 \alpha\}):= X_0 + B(0,\max\{c,\delta_0 \alpha\})$ and thus $\|\nabla f(x_k)\| \leqslant b_\alpha \|z_{k+1}-z_{k}\|$ for all $k \in \mathbb{N}$ for some constant $b_\alpha>0$ by Lemma \ref{lem:gradupper}, where $z_k:= (x_k,x_{k-1})$. Hence
\begin{align*}
    \sum_{k=0}^\infty\|\nabla f(x_k)\| & \leqslant \sum_{k=0}^\infty b_\alpha \|z_{k+1} - z_k\| \\
    & \leqslant b_\alpha \|x_0-x_{-1}\|+2b_\alpha\sum_{k=0}^\infty\|x_{k+1}-x_k\| \\
    & \leqslant b_\alpha (\delta_0 \alpha + 2 c) =: c_\alpha
\end{align*}
and
\begin{equation*}
    \min_{i=0,\ldots,k}\|\nabla f(x_i)\| \leqslant \frac{1}{k+1}\sum_{i=0}^k\|\nabla f(x_i)\| \leqslant \frac{c_\alpha}{k+1}.
\end{equation*}

Let $\Phi:\mathbb{R}_+ \times \mathbb{R}^n \rightarrow \mathbb{R}^n$ be the continuous gradient flow of $f$ defined for all $(t,x_0) \in \mathbb{R}_+ \times \mathbb{R}^n$ by $\Phi(t,x_0) := x(t)$ where $x(\cdot)$ is the unique continuous gradient trajectory of $f$ initialized at $x_0$. Uniqueness follows from the Picard–Lindel{\"o}f theorem \cite[Theorem 3.1 p. 12]{coddington1955theory}. Let $\Phi_0 := \Phi(\mathbb{R}_+,X_0)$ and let $C$ be the set of critical points of $f$ in $\overline{\Phi}_0$. $C$ is compact by Lemma \ref{lemma:uniform_boundedness} and \cite[2.1.5 Proposition p. 29]{clarke1990}. Thus there exists $\epsilon >0$ such that either $X_0 \subset C$ or $X_0\setminus \mathring{B}(C,\epsilon/6) \neq \emptyset$ where $\mathring{B}(C,\epsilon/6) := C + \mathring{B}(0,\epsilon/6)$. 

Indeed, either $X_0 \subset C$ or there exists $x \in X_0 \cap C^c$ where the complement $C^c$ of $C$ is open since $C$ is closed. Thus there exists $\epsilon >0$ such that $B(x,\epsilon/6) \subset C^c$. Thus $x \not\in \mathring{B}(C,\epsilon/6)$ (otherwise there exists $x' \in C$ such that $\|x-x'\|< \epsilon/6$, i.e., $C \ni x' \in B(x,\epsilon/6) \subset C^c$) and $x \in  X_0 \setminus \mathring{B}(C,\epsilon/6)$.

By Fact \ref{fact:O(alpha)}, there exists $\delta_1 > \delta_0$ such that for all $\alpha>0$, $K \in \mathbb{N}$, and sequence $(x_{k})_{k\in \{-1\}\cup\mathbb{N}}$ generated by the momentum method \eqref{eq:momentum} for which $x_{-1},\hdots,x_{K-1}\in B(\overline{\Phi}_0,\epsilon):= \overline{\Phi}_0 + B(0,\epsilon)$ and $\|x_0-x_{-1}\| \leqslant \delta_0\alpha$, we have $\|x_k - x_{k-1}\| \leqslant \delta_1 \alpha$ for $k = 0, \ldots, K$. By Lemma \ref{lemma:length_formula}, there exist $\widetilde{\alpha},\eta>0$, $\kappa\geqslant\delta_1$, and 
a diffeomorphism $\psi:[0,\infty)\rightarrow[0,\infty)$ such that for all $K\in\mathbb{N}\setminus \{0\}$, $\alpha\in(0,\widetilde{\alpha}]$, and sequence $(x_{k})_{k\in \{-1\}\cup\mathbb{N}}$ generated by the momentum method \eqref{eq:momentum} for which $x_{-1},\hdots,x_{K-1}\in B(\overline{\Phi}_0,\epsilon)$ and $\|x_0-x_{-1}\| \leqslant \delta_1\alpha$, we have
\begin{equation}
\label{eq:length_discrete_K}
    \sum_{k=0}^{K-1}\|x_{k+1}-x_k\| ~\leqslant~ \psi\left(f(x_0)-f(x_{K-1})+ \eta\alpha\right) + \kappa\alpha.
\end{equation}

Since $f$ is continuous, there exists $\xi \in (0,\epsilon/2)$ such that
\begin{equation}
    \label{eq:X_1'}
     f(x) - \max_{C} f \leqslant \frac{1}{4} \psi^{-1}\left(\frac{\epsilon}{3}\right),~~~ \forall x\in B(C,\xi).
\end{equation}
Let $L>0$ be a Lipschitz constant of $\bar{\beta}f$ on the convex hull of $B(\overline{\Phi}_0,\epsilon)$ and let
\begin{equation}
\label{eq:alpha_hat}
    \hat{\alpha} := \min\left\{\widetilde{\alpha},\frac{\xi}{3L} ,\frac{\epsilon}{6\kappa},\frac{\psi^{-1}(\epsilon/3)}{4\eta}\right\}>0
\end{equation}
where $\bar{\beta}:= 1/(1-\beta)$. If $X_0\subset C$, then let $\bar{\alpha} := \hat{\alpha}$ and $k^* := 0$. Otherwise $X_0\setminus \mathring{B}(C,\epsilon/6) \neq \emptyset$. Since $\bar{\beta}\nabla f$ is continuous, its norm attains its infimum $\nu$ on the non-empty compact set $\overline{\Phi}_0 \setminus \mathring{B}(C,\xi/3)$.  
It is non-empty because $\overline{\Phi}_0 \setminus \mathring{B}(C,\xi/3) \supset X_0\setminus \mathring{B}(C,\epsilon/6) \neq \emptyset$ and $\xi < \epsilon/2$. If $\nu=0$, then there exists $x^* \in \overline{\Phi}_0 \setminus \mathring{B}(C,\xi/3)$ such that $\|\nabla f(x^*)\| = 0$. Then $x^*\in C \setminus \mathring{B}(C,\xi/3)$, which is a contradiction. We thus have $\nu>0$. Hence we may define $T := 2\sigma(X_0)/\nu$ where
\begin{align*}
    \sigma(X_0) = & \sup\limits_{x \in  C^1(\mathbb{R}_+,\mathbb{R}^n)} ~~ \int_0^{\infty} \|x'(t)\|dt \\
  & ~~ \mathrm{subject~to} ~~~ 
\left\{ 
\begin{array}{l}
x'(t) = - \bar{\beta}\nabla f(x(t)), ~\forall t > 0,\\[3mm] x(0) \in X_0,
\end{array}
\right.
\end{align*}
is finite by Lemma \ref{lemma:uniform_boundedness}. The factor $\bar{\beta}>0$ does not change the optimal value because $x(\cdot)$ is a feasible point of the above problem if and only if $x(\cdot/\bar{\beta})$ is a feasible point of the problem in Lemma 2.7 and $\int_0^\infty \|x'(t/\bar{\beta})/\bar{\beta}\|dt = \int_0^\infty \|x'(t)\|dt$. Note that $\sigma(X_0)>0$ and thus $T>0$ because $X_0\not\subset C$. In addition, since $f$ is semialgebraic and has bounded continuous gradient trajectories, it is lower bounded by its smallest critical value\footnote{Indeed, assume to the contrary that there exists $x_0 \in \mathbb{R}^n$ such that $f(x_0)$ is less than the smallest critical value of $f$. The continuous gradient trajectory initialized at $x_0$ converges to a critical point $x^*$ since it is bounded. This limit satisfies $f(x_0) \geqslant f(x^*)$, yielding a contradiction.}. By Lemma \ref{lemma:tracking}, there exists $\bar{\alpha} \in (0,\hat{\alpha}]$ such that for any feasible point $((x_k)_{k\in \{-1\}\cup \mathbb{N}} , \alpha)$ of \eqref{eq:sup_discrete_X_0},
there exists an absolutely continuous function $x:[0,T]\rightarrow\mathbb{R}^n$ such that
\begin{equation}
    x'(t) = -\bar{\beta} \nabla f(x(t)),~~~ \forall
    t \in (0,T), ~~~ x(0) \in X_0, 
\end{equation}
for which $\|x_k - x(k\alpha) \| \leqslant \xi/3$ for $k = 0, \ldots, \lfloor T/\alpha \rfloor$. Now suppose that $\|x'(t)\| \geqslant 2\sigma(X_0)/T$ for all $t\in (0,T)$. Then we obtain the following contradiction
\begin{equation*}
     \sigma(X_0) < T \frac{2\sigma(X_0)}{T} \leqslant \int_0^{T} \|x'(t)\|dt \leqslant \int_0^{\infty} \|x'(t)\|dt \leqslant \sigma(X_0).
\end{equation*}
Hence, there exists $t^* \in (0,T)$ such that $\|x'(t^*)\| = \|\bar{\beta}\nabla f(x(t^*))\| < 2\sigma(X_0)/T = 2\sigma(X_0)/(2\sigma(X_0)/\nu) = \nu$. Since  $x(t^*) \in \overline{\Phi}_0$ and the infimum of the norm of $\bar{\beta}\nabla f$ on $\overline{\Phi}_0 \setminus \mathring{B}(C,\xi/3)$ is equal to $\nu$, it must be that $x(t^*) \in \mathring{B}(C,\xi/3)$. Hence there exists $x^* \in C$ such that $\|x(t^*)-x^*\|\leqslant \xi/3$. 

Since $\alpha \leqslant \hat{\alpha} \leqslant \xi/(3L)$, there exists $k^* \in \mathbb{N}$ such that $t_{k^*} := k^* \alpha \in [ t^* - \xi/(3L) , t^*]$. Thus $\|x_{k^*}-x^*\| \leqslant \|x_{k^*}-x(t_{k^*})\| + \|x(t_{k^*}) - x(t^*)\| + \|x(t^*)-x^*\| \leqslant \xi/3 + L | t_{k^*}-t^* | + \xi/3 \leqslant \xi/3+\xi/3+\xi/3 = \xi$. To obtain the second inequality, we used the fact that for all $t \geqslant 0$, we have $\|x'(t)\| = \|\bar{\beta}\nabla f(x(t))\| \leqslant L$ since $x(t) \in B(\overline{\Phi}_0,\epsilon)$.

Above, we defined $\bar{\alpha} \in (0, \hat{\alpha}]$ if $X_0 \subset C$ or $X_0 \not\subset C$ (in which case $X_0\setminus \mathring{B}(C,\epsilon/6) \neq \emptyset$). We now consider a feasible point $((x_k)_{k\in \{-1\}\cup \mathbb{N}} , \alpha)$ of \eqref{eq:sup_discrete_X_0} regardless of whether $X_0 \subset C$. Based on the above and the fact that $X_0 \neq \emptyset$, there exists $k^* \in \mathbb{N}$ such that $x_{k^*} \in B(C,\xi)$. 
If $K := \inf\{k \geqslant k^* : x_k \notin B(C,\epsilon)\}<\infty$, then
\begin{subequations}
    \begin{align}
        \psi^{-1}\left(\frac{\epsilon}{3}\right) & =  \psi^{-1}\left(\frac{1}{2} \epsilon - \kappa\frac{\epsilon}{6\kappa} \right) \label{eq:decrease_discrete_a} \\
        & \leqslant \psi^{-1}\left((\epsilon-\xi) - \kappa\hat{\alpha} \right) \label{eq:decrease_discrete_b} \\
        & \leqslant \psi^{-1}\left(( \|x_{K}-x^*\|-\|x_{k^*}-x^*\|) - \kappa\bar{\alpha} \right) \label{eq:decrease_discrete_c} \\
        & \leqslant \psi^{-1}\left(\|x_{K}-x_{k^*}\| - \kappa\alpha \right) \label{eq:decrease_discrete_d} \\
        & \leqslant \psi^{-1}\left(\sum_{k=k^*}^{K-1} \|x_{k+1}-x_k\| - \kappa\alpha \right) \label{eq:decrease_discrete_e} \\
        & \leqslant f(x_{k^*}) - f(x_{K-1}) +  \eta\alpha \label{eq:decrease_discrete_f} \\
        & \leqslant \max_C f + \frac{1}{4} \psi^{-1}\left(\frac{\epsilon}{3}\right) - f(x_{K-1}) +\eta\hat{\alpha} \label{eq:decrease_discrete_g} \\
        & \leqslant \max_C f + \frac{1}{2} \psi^{-1}\left(\frac{\epsilon}{3}\right) - f(x_{K-1}) \label{eq:decrease_discrete_h}
    \end{align}
\end{subequations}
Above, the arguments of $\psi^{-1}$ in \eqref{eq:decrease_discrete_a} are equal. \eqref{eq:decrease_discrete_b} through \eqref{eq:decrease_discrete_e} rely on the fact that $\psi^{-1}$ is an increasing function. \eqref{eq:decrease_discrete_b} is due to $\xi<\epsilon/2$. \eqref{eq:decrease_discrete_c} holds because $x_K \notin B(C,\epsilon)$, $x^* \in C$, and $x_{k^*} \in B(x^*,\xi)$. \eqref{eq:decrease_discrete_d} and \eqref{eq:decrease_discrete_e} are consequences of the triangular inequality. \eqref{eq:decrease_discrete_f} is due to the length formula \eqref{eq:length_discrete_K} and the fact that $x_{k^*},\hdots,x_{K-1} \in B(C,\epsilon) \subset B(\overline{\Phi}_0,\epsilon)$. \eqref{eq:decrease_discrete_g} is due to $x_{k^*} \in B(x^*,\xi)$ and \eqref{eq:X_1'}. Finally, \eqref{eq:decrease_discrete_h} is due to $\hat{\alpha} \leqslant \psi^{-1}(\epsilon/3)/(4\eta)$ by definition of $\hat{\alpha}$ in \eqref{eq:alpha_hat}. We remark that $K\geqslant k^*+2$ since $\|x_{k^*+1}-x^*\| \leqslant \|x_{k^*+1}-x_{k^*}\| +\|x_{k^*}-x^*\| \leqslant \delta_1 \alpha+\xi \leqslant \delta_1\epsilon/(6\kappa)  +\epsilon/2 \leqslant \epsilon/6  +\epsilon/2 < \epsilon$. It also holds that $x_{-1},\hdots,x_{K-2} \in B(\overline{\Phi}_0,\epsilon)$, $x_{K-1}$ belongs to
\begin{equation*}
    X_1 ~:=~ B(C,\epsilon) ~ \bigcap ~ \left\{ x\in \mathbb{R}^n : f(x) \leqslant \max_C f - \frac{1}{2} \psi^{-1}\left(\frac{\epsilon}{3}\right) \right\},
\end{equation*}
and $\|x_{K-1} - x_{K-2}\| \leqslant \delta_1 \alpha$. Thus, by the length formula \eqref{eq:length_discrete_K} and the definition of $\sigma(\cdot,\cdot,\cdot)$ in \eqref{eq:sup_discrete_X_0} we have
\begin{align*}
    \sum_{k=0}^{\infty} \|x_{k+1}-x_k\| & = \sum_{k=0}^{K-2} \|x_{k+1}-x_k\| +  \sum_{k=K-1}^{\infty} \|x_{k+1}-x_k\|  \\
     & \leqslant \psi\left(\sup_{X_0} f - \min_{B(\overline{\Phi}_0,\epsilon)} f +  \eta \bar{\alpha} \right) + \kappa\bar{\alpha} +\max\{0, \sigma(X_1,\bar{\alpha},\delta_1)\}.
\end{align*}
Note that the inequality still holds if $K = \infty$, since in that case \eqref{eq:length_discrete_K} implies
\begin{equation*}
    \sum_{k=0}^{\infty} \|x_{k+1}-x_k\| \leqslant \psi\left(\sup_{X_0} f - \min_{B(\overline{\Phi}_0,\epsilon)} f +  \eta \bar{\alpha} \right) + \kappa\bar{\alpha}.
\end{equation*}
Hence 
\begin{equation*}
    \sigma(X_0,\bar{\alpha},\delta_0) \leqslant  \psi\left(\sup_{X_0} f - \min_{B(\overline{\Phi}_0,\epsilon)} f +  \eta \bar{\alpha} \right) + \kappa\bar{\alpha} +\max\{0, \sigma(X_1,\bar{\alpha},\delta_1)\}.
\end{equation*}

It now suffices to replace $X_0$ by $X_1$, $\delta_0$ by $\delta_1$, and repeat the entire proof. Since $f(\Phi(t,x_1)) \leqslant f(\Phi(0,x_1)) \leqslant \max_{C} f - \psi^{-1}(\epsilon/3)/2< \max_{C} f$ for all $t\geqslant 0$ and $x_1\in X_1$, the maximal critical value of $f$ in $\overline{\Phi(\mathbb{R}_+,X_1)}$ is less than the maximal critical value of $f$ in $\overline{\Phi(\mathbb{R}_+,X_0)}$. By the semialgebraic Morse-Sard theorem (Lemma \ref{lemma:morse_sard}), $f$ has finitely many critical values. Thus, it is eventually the case that one of the sets $X_0,X_1,\hdots$ is empty. In order to conclude, one simply needs to choose an upper bound on the step sizes $\bar{\alpha}'$ corresponding to $X_1$ that is less than or equal to the upper bound $\bar{\alpha}$ used for $X_0$. $\sigma(X_0,\cdot,\delta_0)$ is finite when evaluated at the last upper bound thus obtained. Indeed, the recursive formula above 
still holds if we replace $\bar{\alpha}$ by any $\alpha \in (0,\bar{\alpha}]$. 
In particular, we may take $\alpha := \bar{\alpha}'$.
\end{proof}

The inequalities in \eqref{eq:rate} imply that the iterates converge to a critical point of $f$ (i.e., a point $x^* \in \mathbb{R}^n$ such that $\nabla f(x^*)=0$). The previously known global convergence rate of the momentum method is $O(1/\sqrt{k})$ for coercive differentiable functions with a Lipschitz continuous gradient \cite[Theorem 4.14]{ochs2014ipiano} if $x_{-1}=x_0$ and $\gamma = 0$. Without the coercivity assumption, $O(1/\sqrt{k})$ is also the rate of a modified version of the momentum method which does not capture the heavy ball method and Nesterov's accelerated gradient method as special cases \cite[Corollary 1]{ghadimi2016accelerated}. 
Our third and final convergence result gives sufficient conditions for convergence to a local minimizer.

\begin{theorem}[Convergence to local minimizers]
\label{thm:local_min}
Let $f \in C^2(\mathbb{R}^n)$ be semialgebraic with bounded continuous gradient trajectories. Let $\beta \in (-1,1) \setminus \{0\}$, $\gamma \in \mathbb{R}$, and $\delta \geqslant 0$. If the Hessian of $f$ has a negative eigenvalue at all critical points of $f$ that are not local minimizers, then for any bounded subset $X_0$ of $\mathbb{R}^n$, there exists $\bar{\alpha}>0$ such that, for all $\alpha \in (0,\bar{\alpha}]$ and for almost every 
$(x_{-1},x_0) \in \mathbb{R}^n \times X_0$, 
any sequence $x_{-1},x_0,x_1,\hdots \in \mathbb{R}^n$ generated by the momentum method \eqref{eq:momentum} 
that satisfies $\|x_0-x_{-1}\| \leqslant \delta \alpha$ 
converges to a local minimizer of $f$.
\end{theorem}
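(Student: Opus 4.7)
The plan combines Theorem \ref{thm:convergence}, which already ensures convergence of the iterates to a critical point of $f$, with a stable-manifold / saddle-avoidance argument in the spirit of \cite{lee2016,sun2019heavy,o2019behavior}. The task reduces to showing that, for almost every initialization satisfying $\|x_0-x_{-1}\|\leqslant \delta\alpha$, the limit critical point is not one of the strict saddles identified by the Hessian hypothesis. To this end, recast the momentum method as a discrete-time dynamical system on $\mathbb{R}^{2n}$ by setting $z_k:=(x_k,x_{k-1})$ and
\begin{equation*}
T_\alpha(x,y) := \bigl(x + \beta(x-y) - \alpha \nabla f(x + \gamma(x-y)),\ x\bigr),
\end{equation*}
so that $z_{k+1}=T_\alpha(z_k)$. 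Since $f\in C^2$, the map $T_\alpha$ is $C^1$; the block form of its Jacobian gives $\det DT_\alpha(x,y) = \pm\det(-\beta I + \alpha\gamma\,\nabla^2 f(x+\gamma(x-y)))$, which is nonzero when $\beta\neq 0$ and $\alpha$ is small enough on any bounded set. Thus $T_\alpha$ is a local diffeomorphism, and the assumption $\beta\neq 0$ is used precisely here.

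Next, I would analyze the spectrum of $DT_\alpha(x^*,x^*)$ at a fixed point with $\nabla f(x^*)=0$. Each eigenspace of $H^*:=\nabla^2 f(x^*)$ yields an invariant $2$-dimensional subspace for $DT_\alpha(x^*,x^*)$, and the restriction to each such subspace has characteristic polynomial
\begin{equation*}
\lambda^2 - \bigl((1+\beta) - \alpha(1+\gamma)\mu\bigr)\lambda + \bigl(\beta - \alpha\gamma\mu\bigr) = 0,
\end{equation*}
where $\mu$ is the corresponding eigenvalue of $H^*$. At $\alpha=0$ the roots are $1$ and $\beta\in(-1,1)$; implicit differentiation at $\alpha=0$ gives $\lambda'(0)=-\mu/(1-\beta)$ for the branch with $\lambda(0)=1$, which is strictly positive whenever $\mu<0$. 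Consequently, at every strict saddle $x^*$ in the Hessian sense, $DT_\alpha(x^*,x^*)$ has a real eigenvalue strictly greater than $1$ for all sufficiently small $\alpha>0$. This is the Jacobian-level strict-saddle property needed to invoke the center-stable manifold theorem.

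Now I would run the global saddle-avoidance argument. By Theorem \ref{thm:convergence}, for $\alpha\in(0,\bar{\alpha}]$ any trajectory with $x_0\in X_0$ and $\|x_0-x_{-1}\|\leqslant\delta\alpha$ stays in a compact set $K\subset\mathbb{R}^n$ depending only on $X_0$ and $\delta$, and converges to a critical point of $f$ in $K$. The set $C^\sharp$ of non-minimizing critical points in $K$ is compact and semialgebraic, hence admits a finite Whitney stratification. Shrinking $\bar{\alpha}$ if necessary, the eigenvalue bound above holds uniformly on $C^\sharp$ and $T_\alpha$ is a $C^1$ diffeomorphism on a neighborhood of $K$. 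Following \cite[Corollary 2]{lee2016} and its adaptation to the heavy ball method in \cite[Theorem 2]{sun2019heavy}, the set of $z_0\in\mathbb{R}^{2n}$ whose forward $T_\alpha$-orbit converges to some point of $C^\sharp$ is a countable union of embedded $C^1$ manifolds of dimension strictly less than $2n$, and hence has Lebesgue measure zero. Intersecting with $\{(x_{-1},x_0)\in\mathbb{R}^n\times X_0:\ \|x_0-x_{-1}\|\leqslant\delta\alpha\}$ and combining with Theorem \ref{thm:convergence} forces the limit to be a local minimizer for almost every such pair; pairs violating the velocity bound contribute vacuously to the almost-everywhere claim.

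The main obstacle is the third step: $C^\sharp$ need not be discrete (critical manifolds are the rule in matrix factorization and linear neural network problems), so the stable manifold theorem cannot be applied at a single point. The remedy is the covering argument enabled by semialgebraic Whitney stratification and the semialgebraic Morse--Sard theorem (Lemma \ref{lemma:morse_sard}), together with the global diffeomorphism property of $T_\alpha$. A secondary technical point is choosing $\bar{\alpha}$ so that, simultaneously, Theorem \ref{thm:convergence} applies, $T_\alpha$ is a diffeomorphism on a neighborhood of $K$, and the $>\!1$ eigenvalue estimate holds uniformly over the compact saddle set $C^\sharp$; all three conditions are open and hence compatible.
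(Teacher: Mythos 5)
Your overall route matches the paper's: reduce to the global convergence result (Theorem~\ref{thm:convergence}), recast the method as a first-order system $z_{k+1}=F(z_k)$ on $\mathbb{R}^{2n}$, verify that $F$ has full-rank Jacobian on a bounded set for small $\alpha$ and that strict saddles lift to fixed points of $F$ with spectral radius $>1$, and then invoke a center-stable-manifold / measure-zero argument. Your Jacobian and characteristic-polynomial computations are correct, and your perturbative eigenvalue argument (implicit differentiation at $\alpha=0$, giving $\lambda'(0)=-\mu/(1-\beta)>0$) is a valid, if slightly less economical, alternative to the paper's one-liner $\varphi_j(1)=\alpha d_j<0$, which directly exhibits a real root $>1$ for \emph{every} $\alpha>0$ and thereby avoids any uniformity discussion over the compact saddle set.

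However, there is a genuine gap in your third step. You invoke \cite[Corollary 2]{lee2016} and \cite[Theorem 2]{sun2019heavy} as if they apply directly, but those results assume that the iteration map is a global diffeomorphism of the domain, or at the very least that the domain of interest is forward-invariant ($F(X)\subset X$). Here $T_\alpha$ is only known to have full-rank Jacobian on a bounded neighborhood of $K\times K$, and there is no reason for $T_\alpha$ to map this neighborhood into itself: iterates are confined to $K$ by Theorem~\ref{thm:convergence}, but the map itself is defined on all of $\mathbb{R}^{2n}$ and may leave the neighborhood. The paper's Lemma~\ref{lem:unstable} is exactly the device that repairs this: it replaces the hypothesis $F(X)\subset X$ by the weaker hypothesis that the \emph{trajectory} stays in $X$, and proves the measure-zero statement by covering the set $W$ of initial conditions with convergent-in-$X$ orbits by countably many sets $V_k(y_i)=(F_{|X})^{-1}\circ\cdots\circ(F_{|X})^{-1}(W^{\mathrm{sc}}_{\mathrm{loc}}(y_i)\cap X)$, each of measure zero because $F$ is a local submersion on $X$. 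Your plan skips this and therefore does not quite close. A secondary point: the Whitney stratification of $C^{\sharp}$ is unnecessary; Lindel\"of's lemma applied to the open cover $\{B_y\}_{y\in Y}$ from the center-stable manifold theorem already yields the required countable cover of the unstable fixed-point set, with no need for semialgebraicity of $C^{\sharp}$ beyond what the Morse--Sard theorem already gives.
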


In practice, if $X_0$ has positive measure and $\delta>0$, Theorem \ref{thm:local_min} means that one can generate $x_0$ uniformly at random in $X_0$ and generate $x_{-1}$ uniformly at random in the ball of radius $\delta \alpha$ centered at $x_0$ in order to guarantee convergence to a local minimizer almost surely. In contrast to the gradient method \cite[Corollary 1]{josz2023global}, we need to assume that the Hessian of $f$ has a negative eigenvalue at local maxima of $f$. Indeed, the function values are not necessarily decreasing along the iterates. While the proof of Theorem \ref{thm:local_min} crucially depends on the length bound in Theorem \ref{thm:convergence}, it mostly requires extending well-known arguments regarding the center and stable manifolds theorem \cite[Theorem III.7]{shub2013global}. For this reason, we defer its proof to Appendix \ref{sec:Proof of Theorem thm:local_min}.

\section{Proof of the length formula}
\label{sec:Proof of the length formula}

Given $\lambda>0$, consider the following Lyapunov function proposed by Zavriev and Kostyuk \cite{zavriev1993heavy}:
\begin{equation*}
    \begin{array}{cccc}
         H_\lambda: & \mathbb{R}^n \times \mathbb{R}^n & \longrightarrow & \mathbb{R}\\
        & (x,y) & \longmapsto & f(x) + \lambda \|x-y\|^2.
    \end{array}
\end{equation*}
For certain values of $\lambda$, it is known to be monotonic along the iterates if $\gamma = 0$ \cite[Lemma 1]{zavriev1993heavy} \cite[Proposition 4.7 (a)]{ochs2014ipiano} or $\beta = \gamma$ \cite[Lemma 3.1 (ii)]{wen2017linear} \cite[Lemma 3.2]{jia2019inexact}, but not for general $\beta \in (-1,1)$ and $\gamma \in \mathbb{R}$. This justifies the need for Lemma \ref{lemma:lyapunov} in which it will be convenient to rewrite the update rule of the momentum method \eqref{eq:momentum} as
\begin{subequations}
\label{eq:sepalg}
    \begin{align}
    y_k^\beta & = x_k + \beta(x_k-x_{k-1}), \\
    y_k^\gamma & = x_k + \gamma(x_k-x_{k-1}), \\
    x_{k+1} & = y_k^\beta - \alpha \nabla f(y_k^\gamma),
    \end{align}
\end{subequations}
for all $k \in \mathbb{N}$. Likewise, bounds on the norm of the gradient of the objective function and the Lyapunov function are only known if $\gamma = 0$ \cite[Theorem 4.9]{ochs2014ipiano} or $\beta = \gamma$ \cite[Equation (3.22)]{wen2017linear} \cite[Lemma 3.3]{jia2019inexact}, which calls for Lemma \ref{lem:gradupper}. 

\begin{lemma}\label{lemma:lyapunov}
Let $f\in C^{1,1}_{\rm loc}(\mathbb{R}^n)$, $X\subset\mathbb{R}^n$ be bounded, $\beta\in (-1,1)$, and $\gamma\in \mathbb{R}$. There exists $\bar{\alpha}>0$ such that for all $\alpha\in(0,\bar{\alpha}]$, there exist $\lambda^+>\lambda^->0$ such that for all $\lambda\in(\lambda^-,\lambda^+)$, there exists $c_1>0$ such that for all $K\in \mathbb{N}$, if $x_{-1},\hdots, x_{K+1} \in X$ are iterates of the momentum method \eqref{eq:momentum}, then for $k=0,\hdots,K$ we have
\begin{equation*}
     H_\lambda(x_{k+1},x_k) \leqslant H_\lambda(x_{k},x_{k-1}) - c_1(\|x_{k+1} - x_k\|^2+\|x_{k} - x_{k-1}\|^2).
\end{equation*}
If $M>0$ is a Lipschitz constant of $\nabla f$ on $S+\max\{|\beta|,|\gamma|\}(S-S)$ where $S$ is the convex hull of $X$, then one may take
\begin{subequations}
\begin{gather}
    \bar{\alpha}:=\min\left\{\frac{1}{M},\frac{1-\beta^2}{2(\beta^2+2|\beta-\gamma|)M}\right\}, \label{eq:alpha_range}\\
    \lambda^-:=\left(\frac{1}{2\alpha}+\frac{M}{2}\right)\beta^2+\frac{|\beta-\gamma|M}{2},~~~\lambda^+:=\frac{1}{2\alpha}-\frac{|\beta-\gamma|M}{2},  \label{eq:lambda_range} \\
    \text{and} ~~~ c_1 : = \min \{ \lambda - \lambda^-, \lambda^+ - \lambda\}. \label{eq:c_1}
\end{gather}
\end{subequations}
\end{lemma}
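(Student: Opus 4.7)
The plan is to derive a one-step inequality of the form
\begin{equation*}
    f(x_{k+1}) - f(x_k) \leqslant \lambda^-\|x_k-x_{k-1}\|^2 - \lambda^+\|x_{k+1}-x_k\|^2,
\end{equation*}
from which the Lyapunov descent follows immediately by adding $\lambda\|x_{k+1}-x_k\|^2 - \lambda\|x_k-x_{k-1}\|^2$ to both sides and setting $c_1 := \min\{\lambda-\lambda^-,\,\lambda^+-\lambda\}$, which is positive for $\lambda \in (\lambda^-, \lambda^+)$.

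First I would apply the standard $M$-Lipschitz descent inequality twice, both anchored at $y_k^\beta$: once to bound $f(x_{k+1}) - f(y_k^\beta)$ using $x_{k+1} - y_k^\beta = -\alpha\nabla f(y_k^\gamma)$, and once in its reverse form to bound $f(y_k^\beta) - f(x_k)$ using $y_k^\beta - x_k = \beta(x_k-x_{k-1})$. All four points $x_k, y_k^\beta, y_k^\gamma, x_{k+1}$ lie in the convex set $S + \max\{|\beta|,|\gamma|\}(S-S)$ on which $\nabla f$ has Lipschitz constant $M$, so the application is valid. Summing yields
\begin{equation*}
    f(x_{k+1}) - f(x_k) \leqslant \beta\langle \nabla f(y_k^\beta), x_k-x_{k-1}\rangle - \alpha\langle \nabla f(y_k^\beta), \nabla f(y_k^\gamma)\rangle + \frac{M\alpha^2}{2}\|\nabla f(y_k^\gamma)\|^2 + \frac{M\beta^2}{2}\|x_k-x_{k-1}\|^2.
\end{equation*}

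The crucial step is the decomposition $\nabla f(y_k^\beta) = \nabla f(y_k^\gamma) + E$ with $\|E\| \leqslant M|\beta-\gamma|\|x_k-x_{k-1}\|$. The two terms containing $E$ combine into $\langle E, \beta(x_k-x_{k-1}) - \alpha\nabla f(y_k^\gamma)\rangle$, which by the update rule equals $\langle E, x_{k+1}-x_k\rangle$ and is therefore bounded by $\frac{M|\beta-\gamma|}{2}(\|x_k-x_{k-1}\|^2 + \|x_{k+1}-x_k\|^2)$ via Cauchy-Schwarz and Young's inequality. The remaining terms involve only $\nabla f(y_k^\gamma)$, which I would eliminate using $\alpha\nabla f(y_k^\gamma) = \beta(x_k-x_{k-1}) - (x_{k+1}-x_k)$; this substitution expresses $\beta\langle \nabla f(y_k^\gamma), x_k-x_{k-1}\rangle$ and $\alpha^2\|\nabla f(y_k^\gamma)\|^2$ entirely in terms of $\|x_k-x_{k-1}\|^2$, $\|x_{k+1}-x_k\|^2$, and $\langle x_k-x_{k-1}, x_{k+1}-x_k\rangle$. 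A short computation reveals that all $\tfrac{1}{\alpha}$ contributions to $\|x_k-x_{k-1}\|^2$ cancel, leaving a cross term with coefficient $\beta(\tfrac{1}{\alpha} - M)$, nonnegative in absolute value once $\alpha \leqslant 1/M$.

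Applying Young's in the weighted form $|\beta|\cdot ab \leqslant \frac{1}{2}(\beta^2 a^2 + b^2)$ to this cross term contributes $(\tfrac{1}{2\alpha} - \tfrac{M}{2})(\beta^2\|x_k-x_{k-1}\|^2 + \|x_{k+1}-x_k\|^2)$; after collecting all coefficients, the $\|x_k-x_{k-1}\|^2$ coefficient becomes exactly $(\tfrac{1}{2\alpha} + \tfrac{M}{2})\beta^2 + \tfrac{M|\beta-\gamma|}{2} = \lambda^-$, and the $\|x_{k+1}-x_k\|^2$ coefficient becomes $-\tfrac{1}{2\alpha} + \tfrac{M|\beta-\gamma|}{2} = -\lambda^+$. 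The positivity $\lambda^+ > \lambda^-$ is equivalent to $\alpha < (1-\beta^2)/[M(\beta^2 + 2|\beta-\gamma|)]$, guaranteed by the stated bound on $\bar{\alpha}$. I expect the main subtlety to be the choice of anchor: had I instead anchored both descent inequalities at $y_k^\gamma$, the Lipschitz error on $\nabla f$ would scale with $|\gamma|$ rather than the sharper $|\beta-\gamma|$, and the clean cancellation producing $\langle E, x_{k+1}-x_k\rangle$ would be lost.
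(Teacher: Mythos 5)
Your proof is correct and takes essentially the same approach as the paper: both anchor the two Taylor bounds at $y_k^\beta$, isolate the error $\nabla f(y_k^\beta)-\nabla f(y_k^\gamma)$ to produce the $|\beta-\gamma|M$ cross term via Cauchy--Schwarz and Young, and use $\alpha\leqslant 1/M$ to control the remaining terms. The only organizational difference is cosmetic: where the paper keeps $x_{k+1}-y_k^\beta$ symbolic, applies the cosine rule to $\langle x_{k+1}-y_k^\beta,\,x_k-x_{k+1}\rangle$, and then drops the resulting nonpositive $\|y_k^\beta-x_{k+1}\|^2$ term, you substitute $\alpha\nabla f(y_k^\gamma)=\beta(x_k-x_{k-1})-(x_{k+1}-x_k)$ throughout and handle the final cross term $\langle x_k-x_{k-1},x_{k+1}-x_k\rangle$ with a single weighted Young's inequality --- these two routes are algebraically equivalent and yield identical coefficients $\lambda^-$ and $\lambda^+$.
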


\begin{proof}
Consider $\bar{\alpha}$ as defined in \eqref{eq:alpha_range} and let $\alpha\in(0,\bar{\alpha}]$. Given $K\in \mathbb{N}$, let $x_{-1},\hdots, x_{K+1} \in X$ be iterates generated by the momentum method \eqref{eq:momentum}. A bound on the Taylor expansion of $f$ yields
\begin{subequations}
\begin{align}
    f(x_{k+1}) &\leqslant f(y_k^\beta) + \langle \nabla f(y_k^\beta), x_{k+1}-y_k^\beta\rangle + \frac{M}{2}\| x_{k+1}-y_k^\beta\|^2, \label{eq:upperTaylor} \\
    f(x_k) &\geqslant f(y_k^\beta) + \langle \nabla f(y_k^\beta), x_k-y_k^\beta\rangle - \frac{M}{2}\| x_k-y_k^\beta\|^2, \label{eq:lowerTaylor}
\end{align}
\end{subequations}
where $k \in \{0,\hdots,K\}$. Subtracting (\ref{eq:lowerTaylor}) from (\ref{eq:upperTaylor}) yields
\begin{subequations}\label{eq:decrease1}
\begin{align}
    f(x_{k+1})-f(x_k) \leqslant & \langle \nabla f(y_k^\beta),x_{k+1}-x_k\rangle + \frac{M}{2}(\| x_{k+1}-y_k^\beta\|^2+\| x_k-y_k^\beta\|^2) \\
    = & \langle \nabla f(y_k^\beta)-\nabla f(y_k^\gamma),x_{k+1}-x_k\rangle+ \langle\nabla f(y_k^\gamma),x_{k+1}-x_k\rangle \\
    &+\frac{M}{2}(\| x_{k+1}-y_k^\beta\|^2+\| x_k-y_k^\beta\|^2) \\
    = & \langle \nabla f(y_k^\beta)-\nabla f(y_k^\gamma),x_{k+1}-x_k\rangle+ \frac{1}{\alpha}\langle x_{k+1}-y_k^\beta,x_k-x_{k+1}\rangle \\
    &+\frac{M}{2}(\| x_{k+1}-y_k^\beta\|^2+\| x_k-y_k^\beta\|^2).
\end{align}
\end{subequations}
By the Cauchy-Schwarz and AM-GM inequalities, we have
\begin{subequations}\label{eq:decrease2}
    \begin{align}
        \langle \nabla f(y_k^\beta)-\nabla f(y_k^\gamma),x_{k+1}-x_k\rangle & \leqslant \| \nabla f(y_k^\beta)-\nabla f(y_k^\gamma) \| \| x_{k+1}-x_k\| \\
        &\leqslant M\| y_k^\beta- y_k^\gamma\| \| x_{k+1}-x_k\| \\
        &= M|\beta-\gamma|\| x_k-x_{k-1}\| \| x_{k+1}-x_k\| \\
        &\leqslant \frac{|\beta-\gamma|M}{2}(\| x_k-x_{k-1}\|^2+\| x_{k+1}-x_k\|^2).
    \end{align}
\end{subequations}
By the cosine rule, for any $a,b,c\in\mathbb{R}^n$, it holds that  
\begin{equation*}
    \langle a-b,c-a\rangle = \frac{1}{2}(\| b-c\|^2-\| a-b\|^2-\| c-a\|^2). 
\end{equation*}
By letting $a:=x_{k+1}$, $b:=y_k^\beta$ and $c:=x_k$, we have 
\begin{equation}\label{eq:decrease3}
     \langle x_{k+1}-y_k^\beta,x_k-x_{k+1}\rangle =\frac{1}{2}(\| y_k^\beta-x_k\|^2-\| x_{k+1}-y_k^\beta\|^2-\| x_k-x_{k+1}\|^2).  
\end{equation}
Combining (\ref{eq:decrease1}), (\ref{eq:decrease2}) and (\ref{eq:decrease3}), we find that
\begin{subequations}
\label{eq:f_decrease}
\begin{align}
    f(x_{k+1})&-f(x_k) \leqslant -\left(\frac{1}{2\alpha}-\frac{M}{2}\right)\| y_k^\beta-x_{k+1}\|^2 \\
    &- \left(\frac{1}{2\alpha}-\frac{|\beta-\gamma|M}{2}\right)\| x_{k+1}-x_k\|^2 \\
    &+ \left[\left(\frac{1}{2\alpha}+\frac{M}{2}\right)\beta^2+\frac{|\beta-\gamma|M}{2}\right]\| x_k-x_{k-1}\|^2.
\end{align}
\end{subequations}
Let $\lambda\in(\lambda^-,\lambda^+)$ where $\lambda^-$ and $\lambda^+$ are defined in \eqref{eq:lambda_range}. Note that $\lambda^-<\lambda^+$ due to the fact that $\alpha \in (0,\bar{\alpha}]$. By definition of $H_\lambda$, it readily follows that
\begin{align*}
    H_\lambda(x_{k+1},x_k)&-H_\lambda(x_k,x_{k-1}) \leqslant -\left(\frac{1}{2\alpha}-\frac{M}{2}\right)\| y_k^\beta-x_{k+1}\|^2 \\
    &- \left(\frac{1}{2\alpha}-\frac{|\beta-\gamma|M}{2}-\lambda\right)\| x_{k+1}-x_k\|^2 \\
    &- \left[\lambda-\left(\frac{1}{2\alpha}+\frac{M}{2}\right)\beta^2-\frac{|\beta-\gamma|M}{2}\right]\| x_k-x_{k-1}\|^2.
\end{align*}
The desired inequality is guaranteed by taking $c_1$ as defined in \eqref{eq:c_1}. 
\end{proof}

\begin{lemma}\label{lem:gradupper}
Let $f\in C^{1,1}_{\rm loc}(\mathbb{R}^n)$, $X\subset\mathbb{R}^n$ be bounded, and $\beta,\gamma\in \mathbb{R}$. For all $\alpha,\lambda>0$, there exist $c_2>0$ such that for all $K \in \mathbb{N}$, if $x_{-1},\hdots, x_{K+1} \in X$ are iterates of the momentum method \eqref{eq:momentum}, then
\begin{align*}
    \max\{\| \nabla H_\lambda(z_k)\|,\|\nabla H_\lambda(z_{k+1})\|\} \leqslant c_2 \| z_{k+1}-z_k\|, 
\end{align*}
for $k=0,\hdots,K$ where $z_k:=(x_k,x_{k-1})\in\mathbb{R}^{2n}$. If $M>0$ is a Lipschitz constant of $\nabla f$ on $S+\max\{|\beta|,|\gamma|\}(S-S)$ where $S$ is the convex hull of $X$, then one may take
\begin{equation}
\label{eq:c_2}
        c_2 :=\sqrt{2}\max\left\{\frac{1}{\alpha},\frac{|\beta|}{\alpha}+M(|\gamma|+1)+4\lambda\right\}.
\end{equation}
\end{lemma}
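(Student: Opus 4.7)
My plan is to compute $\nabla H_\lambda$ at $z_k$ and $z_{k+1}$ in closed form, use the momentum update \eqref{eq:sepalg} to express $\nabla f(y_k^\gamma)$ in terms of successive iterate differences, pass to $\nabla f(x_k)$ and $\nabla f(x_{k+1})$ by a single Lipschitz step, and finally bundle the two $\mathbb{R}^n$-components of $\nabla H_\lambda \in \mathbb{R}^{2n}$ via Cauchy--Schwarz to obtain a bound in $\|z_{k+1}-z_k\|$.

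Concretely, differentiating $H_\lambda(x,y)=f(x)+\lambda\|x-y\|^2$ gives $\nabla H_\lambda(x,y)=(\nabla f(x)+2\lambda(x-y),\,-2\lambda(x-y))$, so the second components of $\nabla H_\lambda(z_k)$ and $\nabla H_\lambda(z_{k+1})$ are directly controlled by $2\lambda\|x_k-x_{k-1}\|$ and $2\lambda\|x_{k+1}-x_k\|$ respectively. From \eqref{eq:sepalg}, we have $\alpha\nabla f(y_k^\gamma)=y_k^\beta-x_{k+1}=(x_k-x_{k+1})+\beta(x_k-x_{k-1})$, hence $\|\nabla f(y_k^\gamma)\|\leqslant \tfrac{1}{\alpha}A+\tfrac{|\beta|}{\alpha}B$ with $A:=\|x_{k+1}-x_k\|$ and $B:=\|x_k-x_{k-1}\|$. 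Combining this with the Lipschitz estimates $\|\nabla f(x_k)-\nabla f(y_k^\gamma)\|\leqslant M|\gamma|B$ and $\|\nabla f(x_{k+1})-\nabla f(y_k^\gamma)\|\leqslant M(A+|\gamma|B)$ on $S+\max\{|\beta|,|\gamma|\}(S-S)$ yields linear bounds on $\nabla f(x_k)$ and $\nabla f(x_{k+1})$ in $A$ and $B$.

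I would then assemble $\|\nabla H_\lambda(z_k)\|^2$ and $\|\nabla H_\lambda(z_{k+1})\|^2$ as sums of squared norms of their two components. For $z_k$, the rewriting $\nabla f(x_k)+2\lambda(x_k-x_{k-1})=\tfrac{1}{\alpha}(x_k-x_{k+1})+(\tfrac{\beta}{\alpha}+2\lambda)(x_k-x_{k-1})+(\nabla f(x_k)-\nabla f(y_k^\gamma))$ shows the first component is bounded by $\tfrac{1}{\alpha}A+(\tfrac{|\beta|}{\alpha}+M|\gamma|+2\lambda)B$. Combining with the $2\lambda B$ bound on the second component via the elementary inequalities $\sqrt{u^2+v^2}\leqslant\sqrt{2}\max\{u,v\}$ and $a^2+b^2\leqslant(a+b)^2$ for $a,b\geqslant 0$ absorbs the extra $2\lambda$ and $4\lambda^2 B^2$ contributions into a single factor of the form $|\beta|/\alpha+M|\gamma|+4\lambda$, delivering the bound $\|\nabla H_\lambda(z_k)\|\leqslant c_2\|z_{k+1}-z_k\|$ with $c_2$ of the prescribed shape. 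For $z_{k+1}$, the analogous rewriting $\nabla f(x_{k+1})+2\lambda(x_{k+1}-x_k)=(2\lambda-\tfrac{1}{\alpha})(x_{k+1}-x_k)+\tfrac{\beta}{\alpha}(x_k-x_{k-1})+(\nabla f(x_{k+1})-\nabla f(y_k^\gamma))$ is used to extract cancellation between $\nabla f(y_k^\gamma)$ and the $2\lambda$ correction in the $(x_{k+1}-x_k)$ direction.

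The main technical obstacle is the final bookkeeping needed to unify the two estimates under a single constant $c_2$ of the prescribed form: the Lipschitz constant $M$ and the parameter $\lambda$ enter asymmetrically into the bounds for $z_k$ and $z_{k+1}$, and one must redistribute the resulting cross terms carefully. A useful reduction is the two-case inequality $(2\lambda-1/\alpha)^2+4\lambda^2\leqslant\max\{1/\alpha,4\lambda\}^2$, proved by splitting on whether $2\lambda\leqslant 1/\alpha$, which allows the combined $(x_{k+1}-x_k)$-contributions of the two $\mathbb{R}^n$-components of $\nabla H_\lambda(z_{k+1})$ (viewed as a single vector in $\mathbb{R}^{2n}$) to be controlled by a term compatible with the first argument $1/\alpha$ of the max defining $c_2$, while the Lipschitz remainder and $B$-terms combine into the second.
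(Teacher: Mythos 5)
Your plan follows the paper's proof in all essentials: compute $\nabla H_\lambda(z_k)$ and $\nabla H_\lambda(z_{k+1})$ explicitly, use the momentum update to write $\alpha\nabla f(y_k^\gamma)=\beta(x_k-x_{k-1})-(x_{k+1}-x_k)$ (this is the content of Fact~\ref{fact:gradupper}), pass to $\nabla f(x_k)$ and $\nabla f(x_{k+1})$ by a single Lipschitz step, and finish with a Cauchy--Schwarz bundling to produce the factor $\|z_{k+1}-z_k\|$. The only substantive difference is the bookkeeping at $z_{k+1}$: the paper writes $\nabla f(x_{k+1})=\nabla f(x_k)+(\nabla f(x_{k+1})-\nabla f(x_k))$ to reuse the $\nabla f(x_k)$ estimate and applies the triangle inequality across the two $\mathbb{R}^n$-blocks of $\nabla H_\lambda$, whereas you keep the blocks together and bundle the $(x_{k+1}-x_k)$-directed contributions jointly via the two-case inequality $(2\lambda-1/\alpha)^2+4\lambda^2\leqslant\max\{1/\alpha,4\lambda\}^2$. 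Both routes yield a valid $c_2>0$, which is the only thing the lemma's existence claim and its downstream use in Proposition~\ref{prop:uniform_decrease} require.

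One assertion in your last paragraph does not hold up: the Lipschitz remainder $\|\nabla f(x_{k+1})-\nabla f(y_k^\gamma)\|\leqslant M\|x_{k+1}-x_k\|+M|\gamma|\|x_k-x_{k-1}\|$ necessarily contributes $M$ to the coefficient of $\|x_{k+1}-x_k\|$, so it cannot be routed entirely into the second argument of the $\max$ as you suggest. Your decomposition actually yields $c_2=\sqrt{2}\max\{\max\{1/\alpha,4\lambda\}+M,\ |\beta|/\alpha+M|\gamma|\}$, which does not reduce to~\eqref{eq:c_2}. This is not a defect of your argument: the paper's own displayed chain also produces $1/\alpha+4\lambda+M$ as the coefficient of $\|x_{k+1}-x_k\|$, so~\eqref{eq:c_2} appears to carry a misplaced $+M$. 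A concrete check: with $f(x)=-x^2/2$, $\alpha=1$, $\beta=\gamma=0$, $M=1$, and $(x_{-1},x_0,x_1)=(1,1,2)$, one has $\|\nabla H_\lambda(z_1)\|\to 2$ as $\lambda\to 0$ while~\eqref{eq:c_2} tends to $\sqrt{2}$. The discrepancy is harmless in the sequel because Proposition~\ref{prop:uniform_decrease} only invokes the lemma with $\lambda=(\beta^2+1+M\beta^2\alpha)/(4\alpha)\geqslant 1/(4\alpha)$, in which regime $4\lambda\geqslant 1/\alpha$ and the second argument of the $\max$ dominates regardless of where the $+M$ sits.
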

\begin{proof}
Using Fact \ref{fact:gradupper}, for $k=0,\hdots,K$ we have
\begin{align*}
    \|\nabla H_\lambda(z_k)\| &\leqslant \|\nabla f(x_k) + 2\lambda(x_k - x_{k-1})\| + \|2\lambda (x_k - x_{k-1})\| \\
    &\leqslant \|\nabla f(x_k)\| + 4\lambda \|x_k - x_{k-1}\| \\
    &\leqslant \sqrt{2}\max\{1/\alpha,|\beta|/\alpha+M|\gamma|+4\lambda\}\|z_{k+1} - z_k\|.
\end{align*}
Similarly, 
\begin{align*}
    \|\nabla H_\lambda(z_{k+1})\| &\leqslant \|\nabla f(x_{k+1}) + 2\lambda(x_{k+1} - x_k)\| + \|2\lambda (x_{k+1} - x_k)\| \\
    &\leqslant \|\nabla f(x_{k+1})\| + 4\lambda \|x_{k+1} - x_k\| \\
    &\leqslant \|\nabla f(x_k)\| + \|\nabla f(x_{k+1})-\nabla f(x_k)\| + 4\lambda \|x_{k+1} - x_k\| \\
    &\leqslant \|\nabla f(x_k)\| + (4\lambda+M) \|x_{k+1} - x_k\| \\
    &\leqslant \sqrt{2}\max\{1/\alpha,|\beta|/\alpha+M|\gamma|+4\lambda+M\}\|z_{k+1} - z_k\|.\qedhere
\end{align*}
\end{proof}

In order to proceed, we recall two results from the literature. Given $x\in \mathbb{R}^n$, consider the distance of $x$ to $S$ defined by $d(x,S) := \inf \{ \|x-y\| : y \in S \}$. Given a set-valued mapping $F:\mathbb{R}^n\rightrightarrows\mathbb{R}^m$ and $y \in \mathbb{R}^m$, let $F^{-1}(y) := \{ x \in \mathbb{R}^n : F(x) \ni y \}$. 

\begin{theorem}[Kurdyka-\L{}ojasiewicz inequality \cite{kurdyka1998gradients,bolte2007clarke}]\label{thm:KL}
    Let $f:\mathbb{R}^n\rightarrow \mathbb{R}$ be locally Lipschitz and semialgebraic. Let $X$ be a bounded subset of $\mathbb{R}^n$ and $v \in \mathbb{R}$ be a critical value of $f$ in $\overline{X}$. There exists $\rho>0$ and a strictly increasing continuous semialgebraic function $\psi:[0,\rho)\rightarrow [0,\infty)$ which belongs to $C^1((0,\rho))$ with $\psi(0) = 0$ such that
    \begin{equation}
    \label{eq:KL}
        \forall x \in X, ~~~ |f(x)-v| \in (0,\rho) ~~~ \Longrightarrow ~~~ d(0,\partial (\psi \circ |f-v|)(x)) \geqslant 1.
    \end{equation}
\end{theorem}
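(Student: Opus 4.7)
The plan is to reduce to a uniform \L{}ojasiewicz gradient inequality on $X$ and then take $\psi$ to be an explicit power function. By translating, I may assume $v = 0$. The natural semialgebraic object of one variable to study is
$$
\mu(r) \;:=\; \inf\{\|u\| \,:\, u \in \partial f(x),\ x \in \overline{X},\ |f(x)| = r\}, \qquad r > 0,
$$
which is well defined and semialgebraic because, for a locally Lipschitz semialgebraic function, the graph of $\partial f$ is semialgebraic, and Tarski-Seidenberg handles the projection and infimum. By the semialgebraic monotonicity lemma, there exists $\rho_0 > 0$ on which $\mu$ is continuous and monotone, with Puiseux asymptotic $\mu(r) = c\, r^{\theta} + o(r^{\theta})$ as $r \downarrow 0$ for some $c \geqslant 0$ and rational $\theta$.

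The heart of the proof --- and the step I expect to be the main obstacle --- is establishing that the Puiseux data satisfy $c > 0$ and $\theta < 1$, yielding the uniform \L{}ojasiewicz gradient inequality $\|u\| \geqslant c\, |f(x)|^{\theta}$ for every $x \in X$ with $0 < |f(x)| < \rho$ and every $u \in \partial f(x)$, with exponent $\theta \in [0,1)$. I would prove this by patching finitely many local bounds, one around each point of the compact set $\overline{X} \cap f^{-1}(0) \cap \{x : 0 \in \partial f(x)\}$ and taking the worst exponent and constant. Each local bound is the classical curve selection argument: if it failed, one would extract a semialgebraic curve $t \mapsto (x(t), u(t))$ along which $u(t) \in \partial f(x(t))$, $f(x(t)) \to 0$, and $\|u(t)\|/|f(x(t))| \to 0$; the chain rule bound $|(f \circ x)'(t)| \leqslant \|u(t)\|\,\|x'(t)\|$ integrated against the Puiseux expansion of $f \circ x$ would then contradict $f(x(t)) \to 0$.

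With the uniform inequality in hand, I would then set
$$
\psi(r) \;:=\; \frac{r^{1-\theta}}{c\,(1-\theta)}, \qquad r \in [0,\rho),
$$
which is strictly increasing, continuous, semialgebraic, belongs to $C^1((0,\rho))$, and satisfies $\psi(0) = 0$. For any $x \in X$ with $0 < |f(x)| < \rho$, the Clarke chain rule for the composition of a $C^1$ strictly increasing function with a locally Lipschitz one gives $\partial(\psi \circ |f|)(x) = \psi'(|f(x)|)\,\mathrm{sign}(f(x))\,\partial f(x)$, whence
$$
d\bigl(0,\partial(\psi \circ |f|)(x)\bigr) \;=\; \psi'(|f(x)|)\, d(0,\partial f(x)) \;\geqslant\; \frac{|f(x)|^{-\theta}}{c} \cdot c\,|f(x)|^{\theta} \;=\; 1,
$$
which is the desired inequality. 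Undoing the translation $v \mapsto 0$ recovers the statement.
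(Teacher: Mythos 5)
The paper imports this theorem directly from \cite{kurdyka1998gradients,bolte2007clarke} and gives no proof of its own, so there is no internal argument to compare against; your sketch follows the standard route from the literature: introduce the minimal subgradient norm as a function of the level, $\mu(r) = \inf\{d(0,\partial f(x)) : x\in\overline{X},\ |f(x)|=r\}$, use semialgebraicity and the monotonicity lemma to get a Puiseux expansion $\mu(r)\sim c\,r^\theta$, argue $\theta<1$, and set $\psi(r)=r^{1-\theta}/(c(1-\theta))$. The final chain-rule step (for $C^1$ outer function $\psi$ composed with locally Lipschitz $|f-v|$) is correct and is exactly the form the paper later uses via \cite[Theorem 2.3.9]{clarke1990}. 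The overall plan is sound.

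The genuine gap is in the step you yourself flag as the heart of the matter. The bound ``$|(f\circ x)'(t)|\leqslant\|u(t)\|\,\|x'(t)\|$'' with $u(t)$ the \emph{minimal-norm} element of $\partial f(x(t))$ does not follow from the Clarke chain rule: that chain rule gives $\partial(f\circ x)(t)\subset\overline{\mathrm{conv}}\{\langle u,x'(t)\rangle : u\in\partial f(x(t))\}$, which upper-bounds $|(f\circ x)'(t)|$ by the \emph{largest} $|\langle u,x'(t)\rangle|$ over $u\in\partial f(x(t))$, not the smallest. To get the inequality you need, one must (i) refine the curve-selection step so that the curve lies in a single stratum $S$ of a Whitney stratification of $f$, so that $(f\circ x)'(t)=\langle\nabla(f|_S)(x(t)),x'(t)\rangle$, and (ii) invoke the projection formula of Bolte--Daniilidis--Lewis--Shiota, $\|\nabla(f|_S)(x)\|\leqslant d(0,\partial f(x))$. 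Only then does $1=|(f\circ x)'|\leqslant d(0,\partial f(x))\,\|x'\|$ follow, and the length/divergent-integral contradiction for $\theta\geqslant1$ goes through. This projection formula is precisely the nonsmooth content that \cite{bolte2007clarke} contributes beyond Kurdyka's smooth theorem, and it cannot be replaced by the naive chain rule. Two smaller points: the positivity $c>0$ requires an argument (e.g.\ semialgebraic Morse--Sard, Lemma~\ref{lemma:morse_sard}, which gives that $v$ is isolated among critical values in $\overline{X}$, together with compactness and the closed graph of $\partial f$); and the negation of $\theta<1$ is $\theta\geqslant1$, whereas the ratio $\|u(t)\|/|f(x(t))|\to0$ you state covers only $\theta>1$ --- the integral contradiction does still work at $\theta=1$ with the ratio merely bounded, so this is a wording slip rather than a structural failure, but it should be fixed.
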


\begin{proposition}[Uniform Kurdyka-\L{}ojasiewicz inequality {\cite[Proposition 5]{josz2023global}}] 
\label{prop:UKL}
Let $f:\mathbb{R}^n\rightarrow \mathbb{R}$ be locally Lipschitz and semialgebraic. Let $X$ be a bounded subset of $\mathbb{R}^n$ and $V$ be the set of critical values of $f$ in $\overline{X}$ if it is non-empty, otherwise $V:=\{0\}$. There exists a concave semialgebraic diffeomorphism $\psi:[0,\infty)\rightarrow[0,\infty)$ such that
\begin{equation}
\label{eq:uniform_KL}
    \forall x \in X \setminus (\partial \tilde{f})^{-1}(0), ~~~ d(0,\partial (\psi \circ \tilde{f})(x)) \geqslant 1,
\end{equation}
where $\tilde{f}(x) := d(f(x),V)$ for all $x\in\mathbb{R}^n$.
\end{proposition}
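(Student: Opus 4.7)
The plan is to combine the local Kurdyka-\L{}ojasiewicz inequality (Theorem \ref{thm:KL}) applied at each critical value of $f$ in $\overline{X}$ with the semialgebraic Morse-Sard theorem (Lemma \ref{lemma:morse_sard}), and then patch the resulting local desingularizing functions into a single global concave semialgebraic diffeomorphism of $[0,\infty)$.

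First I would reduce to the case of non-empty $V$. By Lemma \ref{lemma:morse_sard}, $V$ is finite; if $V = \emptyset$ (so $V := \{0\}$), then $f$ has no critical point in $\overline{X}$ and, by compactness together with outer semicontinuity of $\partial f$, the infimum $\nu := \inf_{x \in \overline{X}} d(0,\partial f(x))$ is strictly positive, so $\psi(t) := t/\nu$ suffices. Otherwise write $V = \{v_1,\ldots,v_m\}$ and, for each $i$, invoke Theorem \ref{thm:KL} around $v_i$ to obtain $\rho_i > 0$ and a strictly increasing continuous semialgebraic function $\psi_i \in C^1((0,\rho_i))$ with $\psi_i(0)=0$ satisfying the KL inequality on $\{x \in X : |f(x)-v_i| \in (0,\rho_i)\}$. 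A standard reduction (replacing each $\psi_i$ by a concave semialgebraic minorant with strictly positive derivative) allows me to assume each $\psi_i$ is a concave semialgebraic diffeomorphism of its domain onto an interval starting at $0$.

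Next I would pick $\rho \in (0, \min_i \rho_i /2)$ small enough that the closed intervals $[v_i-\rho, v_i+\rho]$ are pairwise disjoint, and set $K_\rho := \{x \in \overline{X} : \tilde{f}(x) \geqslant \rho\}$. This compact set contains no critical point of $f$, so by the same outer semicontinuity and compactness argument there exists $\nu > 0$ with $d(0,\partial f(x)) \geqslant \nu$ on $K_\rho$. I would then build $\psi$ on $[0,\rho]$ via $\psi(t) := \sum_{i=1}^m \psi_i(t) + t/\nu$, which is concave, semialgebraic, and has strictly positive derivative, and extend it to $[\rho,\infty)$ as a concave semialgebraic function with eventually linear tail of slope $1/\nu$, smoothing $C^1$ at $\rho$ if necessary, so that $\psi$ is a diffeomorphism of $[0,\infty)$ onto $[0,\infty)$.

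Finally I would verify the inequality on $X \setminus (\partial \tilde{f})^{-1}(0)$. For $x$ with $\tilde{f}(x) \in (0,\rho)$, the disjointness of the windows around the $v_i$'s forces a unique nearest critical value $v_i$, and $\tilde{f}$ coincides locally with $\pm(f-v_i)$; the $C^1$ chain rule for Clarke subdifferentials gives $\partial(\psi \circ \tilde{f})(x) = \psi'(\tilde{f}(x))\, \partial \tilde{f}(x)$, and the bound $\psi'(\tilde{f}(x)) \geqslant \psi_i'(\tilde{f}(x))$ transfers the local KL estimate from $\psi_i$ to $\psi$. For $x \in X$ with $\tilde{f}(x) \geqslant \rho$, I would instead use $\psi'(\tilde{f}(x)) \geqslant 1/\nu$ together with $d(0,\partial f(x)) \geqslant \nu$. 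The main obstacle is the patching step: arranging $\psi$ to be simultaneously concave, semialgebraic, a $C^1$ diffeomorphism of $[0,\infty)$, dominant over each local $\psi_i$ on $[0,\rho]$, and matching a linear tail of slope at least $1/\nu$ on $[\rho,\infty)$. This is essentially an interpolation problem solvable with elementary semialgebraic building blocks, but one has to be careful that the slope conditions required on both sides of $\rho$ remain compatible with concavity.
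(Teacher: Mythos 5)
The paper does not contain its own proof of Proposition~\ref{prop:UKL}: the result is imported verbatim as \cite[Proposition~5]{josz2023global}, so there is no in-paper argument to compare against. I will therefore assess your reconstruction on its own terms.

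Your strategy is the natural one and is, as far as I can tell, essentially the construction used in the cited reference: invoke Lemma~\ref{lemma:morse_sard} to get finiteness of $V$, apply Theorem~\ref{thm:KL} locally around each $v_i$, get a uniform positive lower bound $\nu$ on $d(0,\partial f)$ on the compact set $\{x \in \overline{X} : \tilde{f}(x) \geqslant \rho\}$ (lower semicontinuity of $x\mapsto d(0,\partial f(x))$ plus compactness), and combine. The case distinction in the verification is handled correctly because, on the excluded set, either $\tilde{f}(x)=0$ (so $x$ is a global minimizer of $\tilde{f}$, forcing $0\in\partial\tilde{f}(x)$) or $f(x)$ is a midpoint of two consecutive elements of $V$ (in which case $0\notin\partial f(x)$ rules out a local extremum, $f$ takes values on both sides, and $0\in\partial\tilde{f}(x)$ again); at all remaining points $\partial\tilde{f}(x)=\pm\partial f(x)$, so the chain rule for a $C^1$ strictly increasing outer function gives exactly what you use.

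Two remarks on the patching step you flag as the main obstacle. First, your worry about slope compatibility is overstated: on $(0,\rho]$ you have $\psi'(t)=\sum_i\psi_i'(t)+1/\nu\geqslant 1/\nu$ with $\psi'$ nonincreasing, so any concave $C^1$ extension whose derivative starts at $\psi'(\rho)$ and decreases to $1/\nu$ (for example a single quadratic piece on $[\rho,\rho+1]$ glued to the line of slope $1/\nu$) is automatically compatible with concavity and keeps $\psi'\geqslant 1/\nu$ everywhere, which is exactly the bound you need on $[\rho,\infty)$. Second, you can sidestep the two-sided gluing entirely by first extending each $\psi_i$ (already assumed concave and $C^1$) to a concave semialgebraic diffeomorphism of $[0,\infty)$ via a linear continuation beyond some $t_0<\rho_i$, and only then summing and adding $t/\nu$; the sum is then globally defined, concave, $C^1$, semialgebraic, and surjective onto $[0,\infty)$, with no interpolation needed.

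One point to tighten: your ``standard reduction'' to a concave $\psi_i$ requires replacing $\psi_i$ by a concave function whose \emph{derivative dominates} $\psi_i'$ near $0$ (not a minorant of $\psi_i$ itself), so that the KL bound $\psi_i'(t)\,d(0,\partial f(x))\geqslant 1$ is preserved. For semialgebraic $\psi_i$ this is indeed standard --- by the o-minimal monotonicity theorem $\psi_i'$ is eventually monotone near $0$, and one can majorize it by $c\,t^{\theta-1}$ on a shorter interval, yielding a concave power function --- but the phrase ``concave minorant'' points in the wrong direction and should be corrected.
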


We say that $\psi:[0,\infty)\rightarrow[0,\infty)$ in Proposition  \ref{prop:UKL} is a desingularizing function of $f$ on $X$. The uniform Kurdyka-\L{}ojasiewicz inequality \eqref{eq:uniform_KL} enables one to relate the length of the iterates of the gradient method in any bounded region to the function variation \cite[Proposition 8]{josz2023global}. If one uses the Kurdyka-\L{}ojasiewicz inequality \eqref{eq:KL} instead, then the function values evaluated at the iterates would be restricted to a potentially small range around a critical value. If one uses the uniformized KL property \cite[Lemma 6]{bolte2014proximal}, then the iterates would need to lie in a uniform neighborhood of a compact subset of the critical points of $f$ where $f$ is constant.  

In order to prove \cite[Proposition 8]{josz2023global}, one uses the fact that the objective function is a Lyapunov function for all sufficiently small step sizes in the gradient method. However, in the momentum method the Lyapunov function depends on the step size, as can be seen in Lemma \ref{lemma:lyapunov}. The main challenge that we thus face is to obtain an upper bound on the length of the iterates that is independent of the step size. Otherwise, it could blow up as the step size gets small. Such is the object of the following results.

Proposition \ref{prop:uniform_decrease} takes a first step by showing that the length is bounded by a constant times a desingularizing function evaluated at the Lyapunov function variation plus a constant multiple of the step size. Both constants are independent of the step size. Proposition \ref{prop:lyapunov_uniformKL} ensures that the desingularizing function no longer depends on the step size. Finally, Lemma \ref{lemma:length_formula} gets rid of the dependence on the step size in the argument of the desingularizing function.

Proposition \ref{prop:uniform_decrease} below generalizes \cite[Proposition 8]{josz2023global} from the gradient method to the momentum method. While in the gradient method we have $c_3 = 2$ in \eqref{eq:length_discrete}, in the momentum method obtaining an expression for $c_3$ that does not depend on the step size requires some care. Let $\mathbb{N}^* := \{1,2,3,\hdots\}$. We will use the following simple lemma.

\begin{lemma}
    \label{lemma:concave}
    If $h:\mathbb{R}_+\rightarrow\mathbb{R}_+$ is concave, $h(0) = 0$, and $a,b\geqslant 0$, then $|h(b)-h(a)| \leqslant h(|b-a|)$. 
\end{lemma}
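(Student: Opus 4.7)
The plan is to reduce the statement to subadditivity of concave functions that vanish at the origin. Without loss of generality I can assume $b \geqslant a \geqslant 0$, so that $|h(b)-h(a)| = h(b)-h(a)$ (using that monotonicity of $h$ on $\mathbb{R}_+$ follows from concavity together with $h(0)=0$ and $h\geqslant 0$) and $|b-a| = b-a$. The inequality to prove then reads $h(b) \leqslant h(a) + h(b-a)$, which is exactly the subadditivity of $h$. So the task reduces to showing that a concave function on $\mathbb{R}_+$ with $h(0)=0$ is subadditive.

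To prove subadditivity, I would handle the trivial case $b=0$ first (both sides vanish). For $b>0$, I would write $a$ and $b-a$ as convex combinations of $0$ and $b$, namely
\begin{equation*}
a = \tfrac{a}{b}\cdot b + \tfrac{b-a}{b}\cdot 0, \qquad b-a = \tfrac{b-a}{b}\cdot b + \tfrac{a}{b}\cdot 0.
\end{equation*}
Applying concavity to each of these two identities and using $h(0)=0$ yields
\begin{equation*}
h(a) \geqslant \tfrac{a}{b}\, h(b), \qquad h(b-a) \geqslant \tfrac{b-a}{b}\, h(b).
\end{equation*}
Adding the two inequalities gives $h(a) + h(b-a) \geqslant h(b)$, which is the required subadditivity, and hence $h(b)-h(a) \leqslant h(b-a) = h(|b-a|)$.

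There is no real obstacle here; this is a standard elementary fact about concave functions vanishing at the origin. The only mild point to check is that $h$ is nonnegative and nondecreasing (so that the absolute value on the left-hand side matches $h(b)-h(a)$ after the WLOG reduction), but both follow immediately from concavity together with $h(0)=0$ and the nonnegativity of the codomain.
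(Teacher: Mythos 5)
Your proof is correct and uses essentially the same argument as the paper: write $a$ and $b-a$ as convex combinations of $0$ and $b$, apply concavity, and sum to obtain the subadditivity $h(a)+h(b-a)\geqslant h(b)$. The only difference is that you assert the monotonicity of $h$ without proof, whereas the paper spells out the short contradiction argument (a negative slope between two points would force $h(z)\to-\infty$ as $z\to\infty$, contradicting $h\geqslant 0$); including that step would make the write-up complete.
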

\begin{proof}
    We first show that $h$ is increasing. Since $h$ is concave, for any $0 \leqslant x < y < z$, we have
    \begin{equation*}
        h(z) \leqslant \frac{h(y)-h(x)}{y-x}(z-y)+h(y). 
    \end{equation*}
    If $(h(y)-h(x))(y-x)<0$, then we obtain the contradiction $0 \leqslant \limsup_{z\to\infty}h(z)=-\infty$, establishing that $h$ is increasing. It thus suffices to show that $h(b)-h(a)\leqslant h(b-a)$ for any $0 \leqslant a \leqslant b$. Since $h$ is concave, we have
    \begin{equation*}
        \frac{a}{b}h(0) + \frac{b-a}{b}h(b) \leqslant h(b-a) ~~~\text{and}~~~  \frac{b-a}{b}h(0) + \frac{a}{b}h(b) \leqslant h(a).
    \end{equation*}
    Summing these two inequalities and using the fact that $h(0)=0$ yields the desired result. 
\end{proof}

\begin{proposition}
\label{prop:uniform_decrease}
Let $f\in C^{1,1}_{\rm loc}(\mathbb{R}^n)$ be semialgebraic, $X\subset\mathbb{R}^n$ be bounded, $\beta\in (-1,1)$, $\gamma\in \mathbb{R}$, $\delta \geqslant 0$, and $m \in \mathbb{N}^*$ be an upper bound on the number of critical values of $f$ in $\overline{X}$. There exist $\bar{\alpha},c_3,\zeta>0$ such that for all $\alpha\in(0,\bar{\alpha}]$, there exists $\lambda>0$ such that, for any desingularizing function $\psi_\lambda$ of $H_\lambda$ on $X \times X$ and for all $K\in\mathbb{N}$, if $(x_{k})_{k\in \{-1\}\cup\mathbb{N}}$ are iterates of the momentum method \eqref{eq:momentum} for which $x_{-1},\hdots,x_K\in X$ and $\|x_0-x_{-1}\|\leqslant \delta\alpha$, then
\begin{equation}
\label{eq:length_discrete}
   \frac{1}{2m}\sum_{k=0}^{K}\|z_{k+1}-z_k\| ~\leqslant~ c_3\psi_\lambda\left(\frac{H_\lambda(z_0)-H_\lambda(z_K)}{2m}\right)+\zeta\alpha
\end{equation}
and $H_\lambda(z_0)\geqslant \cdots \geqslant H_\lambda(z_K)$ where $z_k:=(x_k,x_{k-1})\in\mathbb{R}^{2n}$. If $L>0$ and $M>0$ are Lipschitz constants of $\bar{\beta}f$ and $\nabla f$ respectively on $S+\max\{|\beta|,|\gamma|\}(S-S)$ where $S$ is the convex hull of $X$ and $\bar{\beta}:= (1-\beta)^{-1}$, then one may take the same $\bar{\alpha}$ as in \eqref{eq:alpha_range}, 
\begin{equation}
\label{eq:c_3_lambda}
    c_3 := \frac{8\sqrt{2}(2+|\gamma|+3|\beta|)}{1-\beta^2},\; \zeta:=2\sqrt{2}(\delta+L),\text{ and }\lambda := \frac{\beta^2+1+M\beta^2\alpha}{4\alpha}.
\end{equation}
\end{proposition}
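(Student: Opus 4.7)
The plan is to combine the Lyapunov descent from Lemma~\ref{lemma:lyapunov}, the gradient-to-step bound from Lemma~\ref{lem:gradupper}, and the uniform Kurdyka-\L{}ojasiewicz inequality from Proposition~\ref{prop:UKL} applied to $H_\lambda$ on $X\times X$. I first take $\bar{\alpha}$ as in \eqref{eq:alpha_range} and, for each $\alpha\in(0,\bar{\alpha}]$, set $\lambda:=(\beta^2+1+M\beta^2\alpha)/(4\alpha)$. A direct calculation shows $\lambda\in(\lambda^-,\lambda^+)$ with $c_1:=\min\{\lambda-\lambda^-,\lambda^+-\lambda\}\geqslant (1-\beta^2)/(8\alpha)$, so Lemma~\ref{lemma:lyapunov} yields $H_\lambda(z_0)\geqslant\cdots\geqslant H_\lambda(z_K)$ and $\|z_{k+1}-z_k\|^2\leqslant (H_\lambda(z_k)-H_\lambda(z_{k+1}))/c_1$ for $k=0,\ldots,K-1$, while Lemma~\ref{lem:gradupper} supplies $c_2$ with $\max\{\|\nabla H_\lambda(z_k)\|,\|\nabla H_\lambda(z_{k+1})\|\}\leqslant c_2\|z_{k+1}-z_k\|$. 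This specific choice of $\lambda$ makes $c_2/c_1$ bounded above by an $\alpha$-independent constant, which will produce the claimed $c_3$.

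Next, the critical points of $H_\lambda(x,y)=f(x)+\lambda\|x-y\|^2$ are exactly the pairs $(x,x)$ with $\nabla f(x)=0$; hence the set $V$ of critical values of $H_\lambda$ on $\overline{X\times X}$ coincides with the critical values of $f$ in $\overline{X}$, so $|V|\leqslant m$. I apply Proposition~\ref{prop:UKL} to $H_\lambda$ on $X\times X$ to obtain a concave desingularizing function $\psi_\lambda$ satisfying $\psi_\lambda'(\tilde H_\lambda(z))\|\nabla H_\lambda(z)\|\geqslant 1$ at every $z\in X\times X$ with $H_\lambda(z)\notin V$ and $\nabla H_\lambda(z)\neq 0$, where $\tilde H_\lambda(z):=d(H_\lambda(z),V)$. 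Combining descent, gradient, and KL yields, for such indices $k$,
\begin{equation*}
\|z_{k+1}-z_k\| \;\leqslant\; \tfrac{c_2}{c_1}\,\psi_\lambda'(\tilde H_\lambda(z_k))\,(H_\lambda(z_k)-H_\lambda(z_{k+1})),
\end{equation*}
with the symmetric bound using $\psi_\lambda'(\tilde H_\lambda(z_{k+1}))$ also available from Lemma~\ref{lem:gradupper}.

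The crux of the argument is that $\tilde H_\lambda$ is a V-shaped piecewise affine function of $H_\lambda$ between consecutive critical values, so $\tilde H_\lambda(z_k)$ need not be monotone in $k$ even though $H_\lambda(z_k)$ is. I partition $\{0,\ldots,K-1\}$ into the maximal blocks on which $\tilde H_\lambda(z_k)$ is monotone in $k$; since $|V|\leqslant m$, there are at most $2m$ such blocks. On each block $[a,b]$ I apply the KL bound with $\psi_\lambda'$ evaluated at whichever endpoint has the smaller $\tilde H_\lambda$, so that concavity of $\psi_\lambda$ telescopes to
\begin{equation*}
\sum_{k=a}^{b-1}\|z_{k+1}-z_k\| \;\leqslant\; \tfrac{c_2}{c_1}\,\big|\psi_\lambda(\tilde H_\lambda(z_a))-\psi_\lambda(\tilde H_\lambda(z_b))\big| \;\leqslant\; \tfrac{c_2}{c_1}\,\psi_\lambda(H_\lambda(z_a)-H_\lambda(z_b)),
\end{equation*}
where the last inequality uses Lemma~\ref{lemma:concave} together with the fact that $\tilde H_\lambda$ is affine in $H_\lambda$ with slope $\pm 1$ on the block.

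Finally, summing across the at most $2m$ blocks and invoking Jensen's inequality for the concave $\psi_\lambda$ (with $\psi_\lambda(0)=0$) yields
\begin{equation*}
\sum_{k=0}^{K-1}\|z_{k+1}-z_k\| \;\leqslant\; 2m\,\tfrac{c_2}{c_1}\,\psi_\lambda\!\left(\tfrac{H_\lambda(z_0)-H_\lambda(z_K)}{2m}\right).
\end{equation*}
The remaining contributions come from the terminal index $k=K$, from the $O(m)$ boundary indices between blocks at which $H_\lambda(z_k)\in V$ or $\tilde H_\lambda$ is non-differentiable, and from any $z_k$ that is itself a critical point of $H_\lambda$. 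Each such exceptional step is bounded individually by $\sqrt{2}\delta_1\alpha$ via Fact~\ref{fact:O(alpha)} with $\delta_1=\delta+L$, and after dividing through by $2m$ these contribute precisely the error $\zeta\alpha=2\sqrt{2}(\delta+L)\alpha$. I expect the most delicate step to be the bookkeeping that guarantees $c_3$ is $\alpha$-independent (which rests on the balance between $\lambda-\lambda^-$ and $\lambda^+-\lambda$ afforded by the chosen $\lambda$) and that the number of exceptional boundary indices is at most a constant multiple of $m$ so that they can be absorbed into $\zeta\alpha$.
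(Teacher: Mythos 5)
Your overall strategy matches the paper's: combine the Lyapunov descent of Lemma~\ref{lemma:lyapunov} with the gradient-to-step bound of Lemma~\ref{lem:gradupper} and the uniform Kurdyka-\L{}ojasiewicz inequality (Proposition~\ref{prop:UKL}) applied to $H_\lambda$ on $X\times X$, pick $\lambda$ to be the midpoint of $(\lambda^-,\lambda^+)$ so that $c_2/c_1$ is bounded independently of $\alpha$, split $\{0,\ldots,K\}$ into at most $2m$ blocks, telescope within each block using concavity, then apply Jensen's inequality and absorb the $O(m)$ exceptional steps into the $\zeta\alpha$ term via Fact~\ref{fact:O(alpha)}. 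The intermediate estimates (e.g., $c_1\geqslant(1-\beta^2)/(8\alpha)$, the chosen $\lambda$, $\zeta$, $c_3$) are consistent with the paper.

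There is, however, a genuine gap in the block-partitioning step. You partition $\{0,\ldots,K-1\}$ into maximal blocks on which the \emph{sampled sequence} $\widetilde{H}_\lambda(z_k)$ is monotone in $k$, and you then invoke the fact that ``$\widetilde{H}_\lambda$ is affine in $H_\lambda$ with slope $\pm 1$ on the block.'' For a discrete sequence these two conditions are not equivalent: $H_\lambda(z_k)$ can jump across one or several breakpoints of $t\mapsto d(t,V)$ in a single step while the sampled values of $\widetilde{H}_\lambda$ remain monotone. For example, with $V=\{0,1\}$ (breakpoint at $1/2$) and $H_\lambda(z_k)=0.9,\ 0.05$, the sequence $\widetilde{H}_\lambda(z_k)=0.1,\ 0.05$ is monotone, yet $H_\lambda(z_k)-H_\lambda(z_{k+1})=0.85 \neq 0.05=\widetilde{H}_\lambda(z_k)-\widetilde{H}_\lambda(z_{k+1})$. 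Since $d(\cdot,V)$ is $1$-Lipschitz, the inequality that always holds is $|\widetilde{H}_\lambda(z_k)-\widetilde{H}_\lambda(z_{k+1})|\leqslant H_\lambda(z_k)-H_\lambda(z_{k+1})$, which is the \emph{wrong} direction: the telescoping needs $\psi_\lambda'(\widetilde{H}_\lambda(z_k))\big(H_\lambda(z_k)-H_\lambda(z_{k+1})\big)\leqslant \psi_\lambda(\widetilde{H}_\lambda(z_k))-\psi_\lambda(\widetilde{H}_\lambda(z_{k+1}))$, which via concavity requires the reverse inequality. The paper's partition uses the stronger criterion that each half-open interval $[H_\lambda(z_{K_{i+1}}),H_\lambda(z_{K_i+1}))$ exclude all elements of $V$ and all midpoints of consecutive elements of $V$; this is what guarantees a single nearest element of $V$ with a constant sign on the whole block, and hence both the slope-$\pm 1$ affine relation and the monotonicity. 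Since there are at most $2m-1$ such special values, this gives at most $2m$ blocks. A smaller slip: the concavity step $\psi_\lambda'(a)(a-b)\leqslant\psi_\lambda(a)-\psi_\lambda(b)$ for $a\geqslant b\geqslant 0$ requires $\psi_\lambda'$ to be evaluated at the \emph{larger} of $\widetilde{H}_\lambda(z_k),\widetilde{H}_\lambda(z_{k+1})$, not the smaller as you wrote; correspondingly one must use the descent inequality in the form \eqref{eq:descent_lemma} or \eqref{eq:descent_lemma_k+1} according to which endpoint has the larger $\widetilde{H}_\lambda$-value.
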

\begin{proof}
Consider $\bar{\alpha}$ and $c_3$ as defined in \eqref{eq:alpha_range} and \eqref{eq:c_3_lambda} respectively. Given $\alpha\in(0,\bar{\alpha}]$, let $\lambda \in (\lambda^-,\lambda^+)$ where $\lambda^-$ and $\lambda^+$ are defined in \eqref{eq:lambda_range}. Let $\psi_\lambda$ be a desingularizing function of $H_\lambda$ on $X\times X$. By Lemma \ref{lemma:lyapunov} and \ref{lem:gradupper}, for $k=0,\ldots,K-1$ we have 
\begin{equation}
    H_\lambda(z_{k+1}) - H_\lambda(z_k) \leqslant - c_1\|z_{k+1} - z_k\|^2 \leqslant -\frac{c_1}{c_2} \|\nabla H_\lambda(z_k)\|\|z_{k+1} - z_k\| \label{eq:descent_lemma}
\end{equation}
and
\begin{equation}
    H_\lambda(z_{k+1}) - H_\lambda(z_k) \leqslant - c_1\|z_{k+1} - z_k\|^2 \leqslant -\frac{c_1}{c_2} \|\nabla H_\lambda(z_{k+1})\|\|z_{k+1} - z_k\|. \label{eq:descent_lemma_k+1}
\end{equation}

Since $\nabla H_\lambda(x,y) = (\nabla f(x)+2\lambda(x-y) , 2\lambda(y-x))^\top$, the critical values of $f$ in $\overline{X}$ are the same as those of $H_\lambda$ in $\overline{X\times X}$. We let $V$ denote this set of critical values if they exist, otherwise $V:=\{0\}$.

Assume that $[H_\lambda(z_K),H_\lambda(z_0))$ excludes the elements of $V$ and the averages of any two consecutive elements of $V$.\footnote{The point of excluding elements in $V$ and the averages of two consecutive elements in $V$ is to guarantee that there is a unique closest element in $V$ that works for all $H_\lambda(z_K),\ldots,H_\lambda(z_0)$ and this element is either greater than or equal to all of them or less than all of them.} If $H_\lambda(z_1)=H_\lambda(z_0)$, then $z_1 = z_0$ by \eqref{eq:descent_lemma}. Thus $\nabla f(x_0) = 0$ and $z_K = \dots = z_0$ by induction. Otherwise, we have that $H_\lambda(z_1)<H_\lambda(z_0)$. With $\widetilde{H}_\lambda := d(H_\lambda,V)$, we thus have $0 \not \in \partial \widetilde{H}_\lambda(z_k)$ and $1 \leqslant \| \nabla(\psi_\lambda \circ \widetilde{H}_\lambda)(z_k)\| = \psi_\lambda'(\widetilde{H}_\lambda(z_k)) \|\nabla \widetilde{H}_\lambda(z_k)\|$ for $k=1,\hdots,K$ by the uniform Kurdyka-\L{}ojasiewicz inequality \eqref{eq:uniform_KL}. Let $k \in \{1,\ldots K-1\}$. If $\widetilde{H}_\lambda (z_k) \geqslant \widetilde{H}_\lambda(z_{k+1})$, multiplying \eqref{eq:descent_lemma} by $\psi_\lambda'(\widetilde{H}_\lambda(z_k))$ and using concavity of $\psi_\lambda$, we find that  
\begin{align*}
     \|z_{k+1}-z_k\| & \leqslant \frac{c_2}{c_1}\psi_\lambda'(\widetilde{H}_\lambda(z_k))(H_\lambda(z_k) - H_\lambda(z_{k+1})) \\
    & = \frac{c_2}{c_1}\psi_\lambda'(\widetilde{H}_\lambda(z_k))(\widetilde{H}_\lambda(z_k) - \widetilde{H}_\lambda(z_{k+1})) \\
    & \leqslant \frac{c_2}{c_1}( \psi_\lambda (\widetilde{H}_\lambda(z_k)) - \psi_\lambda (\widetilde{H}_\lambda(z_{k+1}))).
\end{align*}
If $\widetilde{H}_\lambda (z_k) \leqslant \widetilde{H}_\lambda(z_{k+1})$, multiplying \eqref{eq:descent_lemma_k+1} by $\psi_\lambda'(\widetilde{H}_\lambda(z_{k+1}))$ and using concavity of $\psi_\lambda$, we find that
\begin{align*}
    \|z_{k+1}-z_k\| &\leqslant \frac{c_2}{c_1}\psi_\lambda'(\widetilde{H}_\lambda(z_{k+1}))(H_\lambda(z_k) - H_\lambda(z_{k+1})) \\
    & = \frac{c_2}{c_1}\psi_\lambda'(\widetilde{H}_\lambda(z_{k+1}))(\widetilde{H}_\lambda(z_{k+1}) - \widetilde{H}_\lambda(z_k)) \\
    & \leqslant \frac{c_2}{c_1}( \psi_\lambda (\widetilde{H}_\lambda(z_{k+1})) - \psi_\lambda (\widetilde{H}_\lambda(z_{k}))).
\end{align*}
As a result,
\begin{equation*}
    \|z_{k+1}-z_k\| \leqslant \frac{c_2}{c_1} |\psi_\lambda (\widetilde{H}_\lambda(z_k)) - \psi_\lambda (\widetilde{H}_\lambda(z_{k+1}))|, ~~~ k = 1,\hdots,K-1.
\end{equation*}
We obtain the telescoping sum 
\begin{subequations}
\label{eq:length_finite}
\begin{align}
    \sum_{k=0}^K \|z_{k+1}-z_k\| &\leqslant \|z_1 - z_0\| +\sum_{k=1}^{K-1} \frac{c_2}{c_1}\left| \psi_\lambda \left(\widetilde{H}_\lambda(z_k)\right) - \psi_\lambda \left(\widetilde{H}_\lambda(z_{k+1})\right) \right| + \|z_{K+1} - z_{K}\| \label{eq:length_finite_a}\\
    & = \|z_1 - z_0\| +\frac{c_2}{c_1} \left|\psi_\lambda\left(\widetilde{H}_\lambda(z_0)\right) - \psi_\lambda\left(\widetilde{H}_\lambda(z_K)\right)\right| + \|z_{K+1}-z_K\| \label{eq:length_finite_b}\\
    & \leqslant \sqrt{2}(\delta+L)\alpha + \frac{c_2}{c_1} \left(\psi_\lambda \left( \left| \widetilde{H}_\lambda(z_0) - \widetilde{H}_\lambda(z_K) \right|\right) - \psi_\lambda(0)\right)+ \sqrt{2}(\delta+L)\alpha \label{eq:length_finite_c} \\
    & = \frac{c_2}{c_1}\psi_\lambda( H_\lambda(z_0) - H_\lambda(z_K))+\zeta\alpha\label{eq:length_finite_d}
\end{align}
\end{subequations}
where $\zeta$ is defined in \eqref{eq:c_3_lambda}. Above, \eqref{eq:length_finite_b} and \eqref{eq:length_finite_d} are due to the monotonicity of $\widetilde{H}_\lambda(z_0), \ldots , \widetilde{H}_\lambda(z_K)$. We use Lemma \ref{lemma:concave} and Fact \ref{fact:O(alpha)} to obtain \eqref{eq:length_finite_c}.

We next consider the general case where
\begin{equation*}
    [H_\lambda(z_K),H_\lambda(z_{K_p+1})) \cup \hdots \cup [H_\lambda(z_{K_2}),H_\lambda(z_{K_1+1})) \cup [H_\lambda(z_{K_1}),H_\lambda(z_0))
\end{equation*}
excludes the elements of $V$ and the averages of any two consecutive elements of $V$. For notational convenience, let $K_0 := -1$ and $K_{p+1} := K$. Since $p \leqslant 2m-1$, we have
\begin{subequations}
\begin{align}
    \sum_{k=0}^K \|z_{k+1}-z_k\| & = \sum_{i=0}^p \sum_{k=K_i+1}^{K_{i+1}} \|z_{k+1}-z_k\| \\
    & \leqslant \sum_{j=0}^p \left(\frac{c_2}{c_1} \psi_\lambda( H_\lambda(z_{{K_i}+1}) - H_\lambda(z_{K_{i+1}}) + \zeta\alpha \right) \\
    & \leqslant \frac{c_2}{c_1} \sum_{i=0}^p \psi_\lambda(H_\lambda(z_{{K_i}+1}) - H_\lambda(z_{K_{i+1}})) + (p+1)\zeta\alpha \\
    & \leqslant \frac{c_2}{c_1}(p+1) ~ \psi_\lambda \left( \frac{1}{p+1} \sum_{i=0}^p (H_\lambda(z_{{K_i}+1}) - H_\lambda(z_{K_{i+1}})) \right) \\ 
    & + (p+1)\zeta\alpha \label{eq:unidecr_jensen} \\
    & \leqslant \frac{c_2}{c_1}(p+1) ~ \psi_\lambda \left( \frac{H_\lambda(z_0)-H_\lambda(z_K)}{p+1} \right) + (p+1)\zeta\alpha \\
    & \leqslant \frac{c_2}{c_1}2m ~ \psi_\lambda \left( \frac{L(z_0)-L(z_K)}{2m} \right) + 2m\zeta\alpha. \label{eq:unidecr_mono}
\end{align}
\end{subequations}
Indeed, \eqref{eq:unidecr_jensen} follows from Jensen's inequality and \eqref{eq:unidecr_mono} follows from the fact that $s\mapsto s\psi_\lambda(a/s)$ is increasing over $(0,\infty)$ for any constant $a>0$. 
Substituting $c_1$ and $c_2$ using \eqref{eq:c_1}, \eqref{eq:lambda_range}, and \eqref{eq:c_2}, we find that
\begin{align*}
    \frac{c_2}{c_1} &= \frac{\sqrt{2}\max\left\{\frac{1}{\alpha},\frac{|\beta|}{\alpha}+M(|\gamma|+1)+4\lambda\right\}}{\min\left\{\lambda-\left(\frac{1}{2\alpha}+\frac{M}{2}\right)\beta^2-\frac{|\beta-\gamma|M}{2}, \frac{1}{2\alpha}-\frac{|\beta-\gamma|M}{2}-\lambda\right\}} \\
    &= \frac{2\sqrt{2}\max\{1,|\beta|+M(|\gamma|+1)\alpha+4\lambda\alpha\}}{\min\left\{2\lambda\alpha-\left(1+\alpha M\right)\beta^2-|\beta-\gamma|M\alpha,1-|\beta-\gamma|M\alpha-2\lambda\alpha\right\}}. 
\end{align*}
If we take $\lambda$ to be the midpoint of $(\lambda^-,\lambda^+)$, i.e., $\lambda=(\beta^2+1+M\beta^2\alpha)/(4\alpha)$, then this simplifies to
\begin{equation*}
    \frac{c_2}{c_1} = \frac{4\sqrt{2}(|\beta|+M(|\gamma|+1)\alpha+\beta^2+1+M\beta^2\alpha)}{1-\beta^2-(\beta^2+2|\beta-\gamma|)M\alpha}.
\end{equation*}
Notice that $c_2/c_1$ is a increasing function of $\alpha$ over $(0,\bar{\alpha}]$, where we recall that $\bar{\alpha}=\min\{1/M,(1-\beta^2)/(2(\beta^2+2|\beta-\gamma|)M)\}$. As a result, 

\begin{align*}
    \frac{c_2}{c_1} &\leqslant \frac{4\sqrt{2}(|\beta|+M(|\gamma|+1)\bar{\alpha}+\beta^2+1+M\beta^2\bar{\alpha})}{1-\beta^2-(\beta^2+2|\beta-\gamma|)M\bar{\alpha}} \\ 
    &\leqslant \frac{4\sqrt{2}(|\beta|+M(|\gamma|+1)\frac{1}{M}+\beta^2+1+M\beta^2\frac{1}{M})}{1-\beta^2-(\beta^2+2|\beta-\gamma|)M\frac{1-\beta^2}{2(\beta^2+2|\beta-\gamma|)M}} \\ 
    &= \frac{4\sqrt{2}(|\beta|+|\gamma|+1+\beta^2+1+\beta^2)}{(1-\beta^2)/2} \\ 
    &\leqslant \frac{8\sqrt{2}(2+|\gamma|+3|\beta|)}{1-\beta^2}=:c_3 > 0.\qedhere
\end{align*}

\end{proof}

If the objective function satisfies the \L{}ojasiewicz gradient inequality, then the Lyapunov function also satisfies it according to \cite[Theorem 3.6]{li2018calculus}. Proposition \ref{prop:lyapunov_uniformKL} below generalizes \cite[Theorem 3.6]{li2018calculus} from functions satisfying the \L{}ojasiewicz gradient inequality to functions satisfying the uniform Kurdyka-\L{}ojasiewicz inequality. We show that a suitable choice of desingularizing function for the objective is a common desingularizing function for the Lyapunov functions for all sufficiently large parameters. 

\begin{proposition}
    \label{prop:lyapunov_uniformKL}
    Let $f:\mathbb{R}^n\rightarrow \mathbb{R}$ be a locally Lipschitz semialgebraic function and $X\subset\mathbb{R}^n$ be bounded. The family of functions $(H_\lambda)_{\lambda \geqslant 1/4}$ admits a common desingularizing function on $X\times X$.
\end{proposition}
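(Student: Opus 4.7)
The plan is to reduce the uniform KL property for the family $(H_\lambda)_{\lambda\geqslant 1/4}$ to the one for $f$. First I note that $\nabla H_\lambda(x,y)=(\nabla f(x)+2\lambda(x-y),-2\lambda(x-y))$ vanishes exactly when $x=y$ and $\nabla f(x)=0$, so the set of critical values of $H_\lambda$ in $\overline{X\times X}$ coincides with the set $V$ of critical values of $f$ in $\overline{X}$ and is independent of $\lambda$. Writing $\widetilde f(x):=d(f(x),V)$ and $\widetilde H_\lambda(x,y):=d(H_\lambda(x,y),V)$, the triangle inequality $|H_\lambda(x,y)-v|\leqslant |f(x)-v|+\lambda\|x-y\|^2$ yields the uniform bound $\widetilde H_\lambda(x,y)\leqslant\widetilde f(x)+\lambda\|x-y\|^2$, while a short calculation based on $\|\nabla f(x)\|\leqslant\|\nabla f(x)+2\lambda(x-y)\|+2\lambda\|x-y\|$ combined with $(a+b)^2\leqslant 2a^2+2b^2$ gives the two gradient estimates $\|\nabla H_\lambda(x,y)\|\geqslant\|\nabla f(x)\|/\sqrt 2$ and $\|\nabla H_\lambda(x,y)\|\geqslant 2\lambda\|x-y\|$.

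I will then apply Proposition \ref{prop:UKL} to $f$ on $X$ to obtain a concave semialgebraic diffeomorphism $\psi_f:[0,\infty)\to[0,\infty)$ satisfying $\psi_f'(\widetilde f(x))\|\nabla f(x)\|\geqslant 1$ whenever $0\notin\partial\widetilde f(x)$, and propose the candidate
\begin{equation*}
    \psi(t) \;:=\; C_1\,\psi_f(t) + C_2\sqrt{t}
\end{equation*}
with constants $C_1,C_2>0$ to be fixed. As a sum of concave semialgebraic diffeomorphisms $[0,\infty)\to[0,\infty)$ vanishing at the origin, $\psi$ has the required structural properties. To verify UKL for $H_\lambda$ at a point $(x,y)\in X\times X$ with $\widetilde H_\lambda>0$ and $\nabla H_\lambda\neq 0$, I split according to whether $\lambda\|x-y\|^2\geqslant\widetilde f(x)$. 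In the first sub-case $\widetilde H_\lambda\leqslant 2\lambda\|x-y\|^2$, and only the $\sqrt t$-term is needed:
\begin{equation*}
    \psi'(\widetilde H_\lambda)\,\|\nabla H_\lambda\| \;\geqslant\; \frac{C_2}{2\sqrt{\widetilde H_\lambda}}\cdot 2\lambda\|x-y\| \;\geqslant\; \frac{C_2}{\sqrt 2}\sqrt{\lambda} \;\geqslant\; \frac{C_2}{2\sqrt 2},
\end{equation*}
which exceeds $1$ once $C_2\geqslant 2\sqrt 2$ since $\lambda\geqslant 1/4$. In the second sub-case, $\widetilde H_\lambda\leqslant 2\widetilde f(x)$ and the conditions $\widetilde f(x)>0$ and $\nabla f(x)\neq 0$ both hold automatically, so the monotonicity of $\psi_f'$ together with UKL for $f$ yields
\begin{equation*}
    \psi'(\widetilde H_\lambda)\,\|\nabla H_\lambda\| \;\geqslant\; C_1\,\psi_f'(2\widetilde f(x))\cdot\frac{\|\nabla f(x)\|}{\sqrt 2} \;\geqslant\; \frac{C_1}{\sqrt 2}\cdot\frac{\psi_f'(2\widetilde f(x))}{\psi_f'(\widetilde f(x))}.
\end{equation*}

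The main obstacle is thus to bound the ratio $r(t):=\psi_f'(2t)/\psi_f'(t)$ from below by a positive constant $r_0$ on the bounded interval $(0,M]$ with $M:=\sup_{x\in X}\widetilde f(x)$. For a general concave function this ratio can degenerate to $0$, but here the semialgebraicity of $\psi_f$ is decisive: a Puiseux-type expansion $\psi_f(t)\sim ct^\theta$ with $\theta\in(0,1]$ near $t=0$ yields $r(t)\to 2^{\theta-1}>0$, while on any compact subinterval of $(0,\infty)$ the continuous positive function $r$ attains a positive minimum. Picking $C_1\geqslant\sqrt 2/r_0$ and $C_2\geqslant 2\sqrt 2$ then makes $\psi$ a common desingularizing function for every $H_\lambda$ with $\lambda\geqslant 1/4$.
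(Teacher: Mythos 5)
Your approach is genuinely different from the paper's: where the paper keeps $\|\nabla f(x)\|$ and $2\lambda\|x-y\|$ together, invokes \cite[Lemma 3.1]{li2018calculus} to split them, and finally takes the desingularizing function $t \mapsto 2\psi(t/2)/\sqrt{\min\{\eta_1,1-\eta_2\}}$ so that the decreasing $\psi'$ can absorb the factor of $2$ cleanly, you instead use a case split on whether $\lambda\|x-y\|^2\geqslant\widetilde f(x)$, a simpler gradient estimate ($\|\nabla H_\lambda\|\geqslant\|\nabla f(x)\|/\sqrt 2$ and $\|\nabla H_\lambda\|\geqslant 2\lambda\|x-y\|$), and a candidate of the form $C_1\psi_f+C_2\sqrt{\cdot}$. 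The elementary gradient bounds are a genuine simplification over \cite[Lemma 3.1]{li2018calculus}. However, your route then forces you to bound $\psi_f'(2t)/\psi_f'(t)$ from below on $(0,M]$, a quantitative issue about a concave semialgebraic derivative that the paper sidesteps entirely by the $\psi(t/2)$ device. Your sketch of this ratio bound (Puiseux expansion near $0^+$ plus compactness away from $0$) is plausible, but it is an additional nontrivial lemma that the paper does not need.

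There is a concrete gap in your Case 2. You verify that $\widetilde f(x)>0$ and $\nabla f(x)\neq 0$ and then invoke the uniform KL inequality for $f$ to conclude $\psi_f'(\widetilde f(x))\,\|\nabla f(x)\|\geqslant 1$. But Proposition~\ref{prop:UKL} only guarantees that inequality for $x\in X\setminus(\partial\widetilde f)^{-1}(0)$, and $(\partial\widetilde f)^{-1}(0)$ is strictly larger than the critical set of $f$: it also contains the points where $f(x)$ is the midpoint $(v_1+v_2)/2$ of two consecutive critical values $v_1<v_2\in V$, at which $\partial\widetilde f(x)=\overline{\mathrm{conv}}\{\pm\nabla f(x)\}\ni 0$ even though $\nabla f(x)\neq 0$. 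Such points can occur in your Case 2: having $0\notin\partial\widetilde H_\lambda(x,y)$ does \emph{not} preclude $f(x)$ from being a midpoint, since the extra term $\lambda\|x-y\|^2$ can shift $H_\lambda(x,y)$ away from the midpoint while $f(x)$ sits exactly on it. The paper patches this by rescaling $\psi$ by the constant $1/\min\{1,c\}$ with $c$ as in \eqref{eq:v1+v2}, which explicitly makes the bound $\|\nabla f(x)\|\geqslant 1/\psi'(\widetilde f(x))$ hold on the finitely many midpoint level sets as well. Your proposal needs the same repair (or some equivalent device) before Case 2 goes through.
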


\begin{proof} 
By Proposition \ref{prop:UKL}, there exists a desingularizing function $\psi$ of $f$ on $X$. Without loss of generality, we may assume that $\psi'(t) \geqslant 1/\sqrt{t}$ for all $t>0$, after possibly replacing $\psi$ by $t\mapsto \int_0^t \max\{\psi'(s),1/\sqrt{s}\}ds$, which is semialgebraic\footnote{To see why, note that $\{s >0 : \psi'(s)\geqslant 1/\sqrt{s}\}$ is semialgebraic and hence a finite union of open intervals and points. Thus the integral is equal to $\psi$ up to a constant on finitely many intervals of $\mathbb{R}_+$, and $t\mapsto 2\sqrt{t}$ up to a constant otherwise. The graph of such a function is hence semialgebraic.} and concave since the integrand is decreasing. We may also multiply $\psi$ by $1/\min\{1,c\}\geqslant 1$ where
\begin{equation}
\label{eq:v1+v2}
    c:= \inf\left\{ \psi'((v_2-v_1)/2)\theta((v_1+v_2)/2): v_1,v_2\in V, ~ v_1<v_2, ~ (v_1,v_2)\cap V = \emptyset\right\} >0,
\end{equation}
$\theta(v):= \inf \{ d(0,\partial f(x)) : x\in X, f(x) = v \}$ for all $v \in \mathbb{R}$, and $V$ is the set of critical values of $f$ in $\overline{X}$ if it is non-empty, otherwise $V:=\{0\}$. Note that $c>0$ as it is the infimum of finitely many positive real numbers. Indeed, $\psi'(t)>0$ for all $t>0$ and $\theta(v)>0$ for all $v \not\in V$. To see why the latter statement holds, assume the contrary that $\theta(v) = 0$ for some $v \not\in V$. Then there exists $(x_k,s_k)_{k\in \mathbb{N}} \subset X \times \mathbb{R}^n$ such that $f(x_k) = v$, $s_k \in \partial f(x_k)$, and $s_k \rightarrow 0$. As $X$ is bounded, $(x_k)_{k\in \mathbb{N}}$ admits a limit point $\bar{x}$. We have that $f(\bar{x}) = v$ by continuity of $f$ and $0 \in \partial f(\bar{x})$ by \cite[2.1.5 Proposition (b)]{clarke1990}. Thus $v \in V$ and a contradiction occurs.

By \cite[Corollary 1, p. 39]{clarke1990} we have 
\begin{equation}
\label{eq:partial_L_lambda}
    \partial H_\lambda(x,y)  = \left(\partial f(x) + \{2\lambda(x-y)\}\right)\times \{2\lambda (y - x)\},    
\end{equation}
so that $0\in \partial H_\lambda(x,y)$ if and only if $0 \in \partial f(x)$ and $x=y$. Therefore, the set of critical values of $f$ in $\overline{X}$ and the set of critical values of $H_\lambda$ in $\overline{X\times X}$ coincide. Accordingly, let $\tilde{f}:=d(f,V)$ and $\widetilde{H}_\lambda:=d(H_\lambda,V)$. Now fix $\lambda \geqslant 1/4$. For all $x,y\in X$ such that $0 \notin \partial \widetilde{H}_\lambda(x,y)$, we have
\begin{subequations}
    \label{eq:L_lambda}
    \begin{align}
        d(0,\partial \widetilde{H}_\lambda (x,y)) & = d(0,\partial H_\lambda(x,y)) \label{eq:L_lambda_a}\\
        & = d\left(0,\left(\partial f(x) + \{2\lambda(x-y)\}\right)\times \{2\lambda (y - x)\}\right) \label{eq:L_lambda_b}\\
        & = \sqrt{d(0, \partial f(x) + 2\lambda(x-y))^{2} + \|2\lambda(x-y)\|^2} \label{eq:L_lambda_c}\\
        & \geqslant \sqrt{\eta_1 d(0,\partial f(x))^2 - \eta_2\|2\lambda(x-y)\|^{2} + \|2\lambda(x-y)\|^2}\label{eq:L_lambda_d}\\
        & = \sqrt{ \eta_1 d(0,\partial f(x))^2 + (1-\eta_2)4\lambda^2\|x-y\|^2} \label{eq:L_lambda_e}\\
        & \geqslant \sqrt{\eta_1 d(0,\partial f(x))^2 + (1-\eta_2)\lambda\|x-y\|^2} \label{eq:L_lambda_f}\\
        & \geqslant \sqrt{\frac{\eta_1}{\psi'(\tilde{f}(x))^2} + \frac{1-\eta_2}{\psi'(\lambda\|x-y\|^2)^2}} \label{eq:L_lambda_g}\\
        & \geqslant \sqrt{\frac{\min\{\eta_1,1-\eta_2\}}{\psi'(\max\{\tilde{f}(x),\lambda\|x-y\|^2\})^2}} \label{eq:L_lambda_h}\\
        & \geqslant \sqrt{ \frac{\min\{\eta_1,1-\eta_2\}}{\psi'\left(\frac{\tilde{f}(x)+\lambda\|x-y\|^2}{2}\right)^2}} \label{eq:L_lambda_i}\\
        & \geqslant \frac{\sqrt{\min\{\eta_1,1-\eta_2\}}}{\psi'\left(\frac{\widetilde{H}_\lambda(x,y)}{2}\right)}. \label{eq:L_lambda_j}
    \end{align}
\end{subequations}
Above, \eqref{eq:L_lambda_a} holds because $0 \notin \partial \widetilde{H}_\lambda(x,y)$ and thus $\widetilde{H}_\lambda(x',y') - \widetilde{H}_\lambda(x,y) = \pm (H_\lambda(x',y') - H_\lambda(x,y))$ for all $(x',y')$ in neighborhood of $(x,y)$ where the sign is constant. \eqref{eq:L_lambda_b} is due to \eqref{eq:partial_L_lambda}. \eqref{eq:L_lambda_c} holds because the distance function is defined using the Euclidean norm. The existence of the constants $\eta_1>0$, $\eta_2\in (0,1)$ in \eqref{eq:L_lambda_d} are guaranteed by \cite[Lemma 3.1]{li2018calculus}. \eqref{eq:L_lambda_e} comes from a factorization. \eqref{eq:L_lambda_f} is due to the fact that $\lambda \geqslant 1/4$. \eqref{eq:L_lambda_g} is due to the uniform Kurdyka-\L{}ojasiewicz inequality \eqref{eq:uniform_KL} and the fact that $\psi'(t) \geqslant 1/\sqrt{t}$ for all $t>0$. Indeed, if $0 \notin \partial \tilde{f}(x)$, then $d(0,\partial f(x)) = d(0,\partial \tilde{f}(x)) \geqslant 1/\psi'(\tilde{f}(x))$ by \cite[2.3.9 Theorem (Chain Rule I) (ii) p. 42]{clarke1990}. If $0 \in \partial \tilde{f}(x)$, then $\tilde{f}(x) = 0$ or $f(x) = (v_1+v_{2})/2$ for some $v_1,v_2 \in V$ such that $v_1< v_2$ and $(v_1,v_2)\cap V = \emptyset$. In the former case, $d(0,\partial f(x)) \geqslant 1/\psi'(\tilde{f}(x)) = 1/\psi'(0) = 1/\infty = 0$ where $\psi'(0):=\lim_{a \searrow 0}\psi'(a)$. In the latter case, we have $\tilde{f}(x) = (v_2-v_1)/2$ and thus $d(0,\partial f(x)) \geqslant \theta((v_1+v_2)/2)\geqslant 1/\psi'((v_2-v_1)/2) = 1/\psi'(\tilde{f}(x))$ by \eqref{eq:v1+v2}. \eqref{eq:L_lambda_h} and \eqref{eq:L_lambda_i} hold because $\psi$ is concave and thus $\psi'$ is decreasing. \eqref{eq:L_lambda_j} is due to the fact that $0<\widetilde{H}_\lambda(z) = d(f(x)+\lambda\|x-y\|^2,V) \leqslant d(f(x),V)+\lambda\|x-y\|^2 = \tilde{f}(x)+\lambda\|x-y\|^2$. We conclude that $t \in [0,\infty) \rightarrow 2\psi(t/2)/\sqrt{\min\{\eta_1,1-\eta_2\}}$ is a desingularizing function of $H_\lambda$ on $X \times X$ for all $\lambda \geqslant 1/4$, which is actually also a desingularizing function of $f$ on $X$.
\end{proof}

Thanks to Proposition \ref{prop:uniform_decrease} and Proposition \ref{prop:lyapunov_uniformKL}, we are now ready to prove the length formula.

\begin{proof}[Proof of Lemma \ref{lemma:length_formula}]
Let $m\in \mathbb{N}^*$ be an upper bound of the number of critical values of $f$ in $\overline{X}$. We apply Proposition \ref{prop:uniform_decrease} to the set $X$ and let $\bar{\alpha} \in (0,1]$, $c_3>1$, and $\zeta>0$ be given by the proposition. Let $\bar{\psi}$ be a common desingularizing function of $(H_\lambda)_{\lambda\geqslant 1/4}$ on $X \times X$ given by Proposition \ref{prop:lyapunov_uniformKL}. Let $\alpha \in (0,\bar{\alpha}]$ and let $\lambda := (\beta^2+1+M\beta^2\alpha)/(4\alpha)\geqslant 1/4$ as defined in \eqref{eq:c_3_lambda}.  Since $c_3>1$, $\psi(t):=2c_3m\bar{\psi}(t/(2m))$ is also a desingularizing function of $H_\lambda$ on $X\times X$. Let $\kappa := 2m \zeta$ and $\eta: = 2m\delta^2(\beta^2+1+M\beta^2)/4$ where $M>0$ is a Lipschitz constant of $\nabla f$ on $S+\max\{|\beta|,|\gamma|\}(S-S)$ and $S$ is the convex hull of $X$. It follows from \eqref{eq:length_discrete} that 
\begin{align*}
   \sum_{k=0}^{K}\|x_{k+1}-x_k\| & \leqslant  \psi\left(H_\lambda(x_0,x_{-1})-H_\lambda(x_{K},x_{K-1})\right) + \kappa\alpha  \\[-2mm]
   & \leqslant \psi\left(f(x_0)-f(x_{K}) + \lambda\|x_0-x_{-1}\|^2\right) + \kappa\alpha \\[1mm]
   & \leqslant \psi\left(f(x_0)-f(x_{K}) + \lambda \delta^2\alpha^2\right) + \kappa\alpha \\[1mm]
   & \leqslant \psi\left(f(x_0)-f(x_{K}) + \eta\alpha\right)+ \kappa\alpha. \qedhere
\end{align*}
\end{proof}

\section{Conclusion}
\label{sec:Conclusion}

This work departs from the commonly accepted assumptions in the literature, and hence guarantees convergence of the momentum method for a larger class of functions which includes important problems in data science. Future directions include extending the analysis to nonsmooth and stochastic settings. 

\section*{Acknowledgments}
We thank the reviewers and the associate editor for their valuable feedback.

\appendix

\section{Proof of Fact \ref{fact:gradupper}}
\label{sec:Proof of Fact fact:gradupper}
By definition of $y_k^\beta$ and $y_k^\gamma$ in (\ref{eq:sepalg}), for $k=0,\hdots,K$, we have
\begin{align*}
    \|\nabla f(x_k)\| &\leqslant \|\nabla f(y_k^\gamma)\| + \|\nabla f(x_k) - \nabla f(y_k^\gamma)\| \\
    &\leqslant \|x_{k+1}-y_k^\beta\|/\alpha + M|\gamma| \|x_k - x_{k-1}\| \\
    &\leqslant \|x_{k+1} - x_k\|/\alpha + (|\beta|/\alpha +M|\gamma|) \|x_k - x_{k-1}\| \\
    &\leqslant \sqrt{2}\max\{1/\alpha,|\beta|/\alpha+M|\gamma|\}\|z_{k+1} - z_k\|,
\end{align*}

\section{Proof of Lemma \ref{lemma:tracking}}
\label{sec:Proof of Lemma lemma:tracking} The matrix
\begin{equation*}
    A: = \begin{pmatrix}
        1+\beta & -\beta \\ 
        1 & \hphantom{-}0
\end{pmatrix} \otimes I_n
\end{equation*}
is diagonalizable as the Kronecker product of two such matrices \cite[Exercise 15 p. 265]{horn2012matrix}. Thus there exist an invertible matrix $P \in \mathbb{R}^{2n\times 2n}$ and a diagonal matrix $D\in \mathbb{R}^{2n\times 2n}$ such that $A = PDP^{-1}$. Let $\|x\|_P := \|P^{-1}x\|$. Then for any $X\in \mathbb{R}^{2n\times 2n}$, we have $\|X\|_P : = \sup \{ \|Xv\|_P : \|v\|_P \leqslant 1\}  = \sup \{ \|P^{-1}XPv\| : \|v\| \leqslant 1\} = \|P^{-1}XP\|$.
By equivalence of norms, there exist $c_1,c_2,c_3>0$ such that $c_1\|x\| \leqslant \|x\|_P \leqslant c_2\|x\|$ and $\|X\|_P \leqslant c_3\|X\|$. 
    
Similar to \cite[Theorem 2]{kovachki2021continuous}, let $\bar{\beta}:=1/(1-\beta)$. Also, let $L>0$ and $M>0$ respectively denote Lipschitz constants of $\bar{\beta}f$ and $\bar{\beta}\nabla f$ with respect to the Euclidean norm $\|\cdot\|$ on $S+\gamma(S-S)$ where $S$ denotes the convex hull of  $B(X_0,\sigma_T(X_0)+\delta+1):= X_0 + B(0,\sigma_T(X_0)+\delta+1)$ and
    \begin{subequations}
\label{eq:sup_trajectory_T}
\begin{align}
    \sigma_T(X_0) := & \sup\limits_{x \in C^1(\mathbb{R}_+,\mathbb{R}^n)} ~~ \int_0^T \|x'(t)\|dt \\
  & ~~ \mathrm{subject~to} ~~~ 
\left\{ 
\begin{array}{l}
x'(t) = -\bar{\beta}\nabla f(x(t)), ~\forall t > 0,\\[3mm] x(0) \in X_0.
\end{array}
\right.
\end{align}
\end{subequations}
    Let $c_4 := ML(1/2+|\beta|/2+|\gamma|-\beta|\gamma|)$ and $c_5 := c_2 M\sqrt{1+ 2\gamma + 2\gamma^2}$. Without loss of generality, we may assume that $X_0 \neq \emptyset$. The feasible set of \eqref{eq:sup_trajectory_T} is thus non-empty (i.e., $\sigma_T(X_0)>-\infty$) because $f$ is lower bounded and belongs to $C^{1,1}_{\mathrm{loc}}(\mathbb{R}^n)$ \cite[Theorem 17.1.1]{attouch2014variational}. Notice that we also have $\sigma_T(X_0)<\infty$. Indeed, by the Cauchy-Schwarz inequality any feasible point $x(\cdot)$ of \eqref{eq:sup_trajectory_T} satisfies

\begin{align*}
    \int_0^T \|x'(t)\|dt & \leqslant \sqrt{T} \sqrt{ \int_0^T \|x'(t)\|^2 dt} \\
    & = \sqrt{T} \sqrt{ \int_0^T \langle - \bar{\beta}\nabla f(x(t)),x'(t)\rangle dt} \\
    & = \sqrt{T} \sqrt{ \bar{\beta}f(x(0)) - \bar{\beta}f(x(T))} \\
    & \leqslant \sqrt{T\bar{\beta}\left(\sup_{X_0} f - \inf_{\mathbb{R}^n} f\right)}<\infty. \label{eq:right_below}
\end{align*}

It is easy to check that $L$ and $ML$ are respectively Lipschitz and gradient Lipschitz constants on $[0,T]$ of any feasible point $x(\cdot)$ of \eqref{eq:sup_trajectory_T}. Indeed, let $x(\cdot)$ be a feasible point of \eqref{eq:sup_trajectory_T}. Since $x(t) \in B(X_0,\sigma_T(X_0))$ for all $t\in [0,T]$, we have $\|x'(t)\| = \|\bar{\beta}\nabla f(x(t))\| \leqslant L$. By the mean value theorem, for all $s,t \in [0,T]$ we have $\|x'(t)-x'(s)\| = \|\bar{\beta}\nabla f(x(t)) - \bar{\beta}\nabla f(x(s))\| \leqslant M \|x(t)-x(s)\| \leqslant M L | t-s |$. As a byproduct, we get the Taylor bound
\begin{equation}
    \label{eq:local_error}
    \|x(t)-x(s) - x'(s)(t-s)\| \leqslant \frac{ML}{2}(t-s)^2.
\end{equation}

Let $\epsilon \in (0,\delta+1]$ and $\bar{\alpha} := \min\{1,\epsilon c_1c_2^{-1}[e^{c_5 T} (|\beta|\delta + 2L-L\beta +c_4c_5^{-1}) -c_4c_5^{-1})]^{-1}\}>0$. Let $x_{-1},x_0, x_1, \ldots \in \mathbb{R}^n$ be a sequence generated by the gradient method with momentum and step size $\alpha \in (0, \bar{\alpha}]$ for which $x_0\in X_0$ and $\|x_0-x_{-1}\|\leqslant \delta\alpha$. Let $x(\cdot)$ be a feasible point of \eqref{eq:sup_trajectory_T} such that $x(0) = x_0$. Similar to the proof of \cite[Theorem 2]{kovachki2021continuous}, let $\bar{x}_k := x(k\alpha)$ for all $k \in \mathbb{N}$. We next reason by induction. We have $\|x_0-\bar{x}_0\| = 0 \leqslant \epsilon$. Assume that $\|x_k-\bar{x}_k\| \leqslant \epsilon$ for $k=0,\hdots,K$ for some index $K\in \mathbb{N}$. For $k=1,\hdots,K$, we have
\begin{subequations}
\begin{align}
    \|\bar{x}_{k+1} - \bar{x}_k + \alpha \bar{\beta}\nabla f(\bar{x}_k) \| & \leqslant  ML\alpha^2/2, \label{eq:taylor1} \\
    \|\bar{x}_{k-1} - \bar{x}_k - \alpha \bar{\beta}\nabla f(\bar{x}_k) \| & \leqslant  ML\alpha^2/2. \label{eq:taylor2}
\end{align}
\end{subequations}
Multiplying \eqref{eq:taylor2} by $|\beta|$ and adding it to \eqref{eq:taylor1} yields 
\begin{equation*}
    \|\bar{x}_{k+1} - \bar{x}_{k} - \beta(\bar{x}_{k}-\bar{x}_{k-1}) + \alpha \nabla f(\bar{x}_{k}) \| \leqslant ML(1+|\beta|)\alpha^2/2,    
\end{equation*}
where we use the fact that $\bar{\beta} - \beta\bar{\beta} = 1$. We also have
\begin{align*}
    \|\nabla f(\bar{x}_{k}+\gamma(\bar{x}_{k}-\bar{x}_{k-1})) - \nabla f(\bar{x}_{k})\| & \leqslant M(1-\beta)|\gamma| \|\bar{x}_{k}-\bar{x}_{k-1}\| \\ & \leqslant ML(1-\beta)|\gamma| \alpha. 
\end{align*}
Hence by combining the above two inequalities, we have 
\begin{equation*}
    \| \bar{x}_{k+1} -\bar{x}_{k} - \beta(\bar{x}_{k}-\bar{x}_{k-1}) + \alpha \nabla f(\bar{x}_{k}+\gamma(\bar{x}_{k}-\bar{x}_{k-1}))\| \leqslant c_4\alpha^2
\end{equation*}
where $c_4 = ML(1/2+|\beta|/2+|\gamma|-\beta|\gamma|)$. Let $e_k = x_k - \bar{x}_k$. We have
\begin{equation*}
    \|e_{k+1} -e_{k} - \beta(e_{k}-e_{k-1}) + \alpha [\nabla f(x_{k}+\gamma(x_{k}-x_{k-1})) - \nabla f(\bar{x}_{k}+\gamma(\bar{x}_{k}-\bar{x}_{k-1})) ]\| \leqslant c_4\alpha^2
\end{equation*}
by using the update rule of momemtum method \eqref{eq:momentum}. Thus
\begin{equation*}
    \|e_{k+1} -e_{k} - \beta(e_{k}-e_{k-1}) + \alpha M_k (e_{k}+\gamma(e_{k}-e_{k-1}))\| \leqslant c_4\alpha^2
\end{equation*}
where $M_k$ is the linear application such that $M_k(a_k-b_k) := \nabla f(a_k) - \nabla f(b_k)$, $M_kx := 0$ for all $x \in \mathrm{span}(a_k-b_k)^\perp$, $a_k := x_{k}+\gamma(x_{k}-x_{k-1})$, $b_k := \bar{x}_{k}+\gamma(\bar{x}_{k}-\bar{x}_{k-1})$ if $a_k \neq b_k$, otherwise $M_k:=0$. Let $v_k = (e_k,e_{k-1})\in\mathbb{R}^{2n}$. We have $\| v_{k+1} - Av_k + \alpha B_kv_k \| \leqslant c_4\alpha^2$ where 
\begin{equation*}
    B_k := 
    \begin{pmatrix}
    1+\gamma & -\gamma \\ 
    0 & \hphantom{-}0
    \end{pmatrix} \otimes M_k.
\end{equation*}
We also have
\begin{equation*}
    \|B_k\| = \left\| \begin{pmatrix}
    1+\gamma & -\gamma \\ 
    0 & \hphantom{-}0
    \end{pmatrix} \right\| \left\| M_k \right\| \leqslant M\sqrt{1+ 2\gamma + 2\gamma^2}
\end{equation*}
since $\bar{x}_{k}+\gamma(\bar{x}_{k}-\bar{x}_{k-1})$ and $x_{k}+\gamma(x_{k}-x_{k-1})$ belong to $S+\gamma(S-S)$. The latter inclusion follows from the induction hypothesis and the fact that $\epsilon \leqslant \delta+1$. Hence $\| v_{k+1} - Av_k + \alpha B_kv_k \|_P \leqslant c_2 c_4\alpha^2$ and thus 
\begin{align*}
    \|v_{k+1}\|_P & \leqslant (\|A\|_P + \alpha \|B_k\|_P)\|v_k\|_P + c_2c_4\alpha^2 \\
    & \leqslant (\|A\|_P + \alpha c_3\|B_k\|)\|v_k\|_P + c_2c_4\alpha^2 \\ 
    & \leqslant (1+c_5 \alpha)\|v_k\|_P + c_2c_4\alpha^2
\end{align*}
where $c_5 = c_3 M\sqrt{1+ 2\gamma + 2\gamma^2}$. By induction, we find that
\begin{align*}
    \|x_{k+1}-\bar{x}_{k+1}\| =& \|e_{k+1}\| \leqslant \|v_{k+1}\| \leqslant c_1^{-1}\|v_{k+1}\|_P \\ 
    \leqslant& c_1^{-1}(1+c_5 \alpha)^k \|v_1\|_P + c_1^{-1}c_2c_4\alpha^2 \sum_{i=0}^{k-1} (1+c_5 \alpha)^i \\
    = & c_1^{-1}c_2(1+c_5 \alpha)^k \|v_1\| + c_1^{-1}c_2c_4\alpha^2 \frac{(1+c_5 \alpha)^k-1}{c_5\alpha} \\
    \leqslant & c_1^{-1}c_2e^{c_5 k\alpha} \|e_1\| + c_1^{-1}c_2c_4c_5^{-1}(e^{c_5 k\alpha}-1)\alpha \\
    \leqslant & c_1^{-1}c_2e^{c_5 T} (\|x_1-x_0\|+\|x_0-\bar{x}_0\| + \|\bar{x}_0-\bar{x}_1\|) \\
    & + c_1^{-1}c_2c_4c_5^{-1}(e^{c_5 T}-1)\alpha \\
    \leqslant & c_1^{-1}c_2e^{c_5 T} (\|\beta(x_0-x_{-1}) - \alpha
    \nabla f(x_0 + \gamma(x_0-x_{-1}))\| \\
    & +L\alpha) + c_1^{-1}c_2c_4c_5^{-1}(e^{c_5 T}-1)\alpha \\
    \leqslant & c_1^{-1}c_2e^{c_5 T} (|\beta| \delta\alpha + L(1-\beta) \alpha
    +L\alpha) \\
     & + c_1^{-1}c_2c_4c_5^{-1}(e^{c_5 T}-1)\alpha \\
    \leqslant & c_1^{-1}c_2[e^{c_5 T} (|\beta|\delta + 2L-L\beta +c_4c_5^{-1}) -c_4c_5^{-1}]\bar{\alpha} \\
    = & \epsilon
\end{align*}
since $\bar{\alpha} = \epsilon c_1c_2^{-1}[e^{c_5 T} (|\beta|\delta + 2L-L\beta +c_4c_5^{-1}) -c_4c_5^{-1})]^{-1}$. 

\section{Proof of Fact \ref{fact:O(alpha)}}
\label{sec:Proof of Fact fact:O(alpha)}

For $k = -1,\ldots,K-1$, we have 
\begin{align*}
x_{k+1} - x_k & = \beta (x_k - x_{k-1}) - \alpha\nabla f (y_k^\gamma) \\
    & = \beta^2 (x_{k-1} - x_{k-2}) - \alpha(\beta\nabla f (y_{k-1}^\gamma)+\nabla f (y_k^\gamma))\\
    &~\vdots\\
    & = \beta^{k+1}(x_0-x_{-1}) - \alpha\sum\limits_{i=0}^{k} \beta^i \nabla f(y_{k-i}^\gamma)
\end{align*}
where $y_k^\gamma := x_k + \gamma(x_k-x_{k-1})$. Let $L$ be a Lipschitz constant of $\bar{\beta}f$ on $S+\gamma(S-S)$ where $S$ is the convex hull of $X$. 
Since $\|x_0-x_{-1}\|\leqslant \delta_0\alpha$, we have
\begin{equation*}
    \|x_{k+1} - x_k\| \leqslant |\beta|^{k+1}\|x_0-x_{-1}\|+\alpha L\bar{\beta}^{-1}\sum_{i=0}^k|\beta|^i \leqslant (\delta_0|\beta|^{k+1}+L)\alpha.
\end{equation*}
Given that $\beta \in (-1,1)$, it suffices to take $\delta := \delta_0+L$. In addition, 
\begin{equation*}
    \|z_k-z_{k-1}\| = (\|x_k-x_{k-1}\|^2+\|x_{k-1}-x_{k-2}\|^2)^{1/2} \leqslant \sqrt{2}\delta\alpha
\end{equation*}
for $k=1,\ldots,K$. 

\section{Proof of Theorem \ref{thm:local_min}}
\label{sec:Proof of Theorem thm:local_min}
It is known that if $F\in C^1(\mathbb{R}^n,\mathbb{R}^n)$, $X$ is an open subset of $\mathbb{R}^n$ such that $\mathrm{rank} (F'(x)) = n$ for all $x\in X$, and $F(X)\subset X$, then for almost every $x \in X$, $F^k(x)$ does not converge as $k\to\infty$ to any fixed point of $F$ in $X$ whose spectral radius is greater than one \cite[Theorem 2]{lee2019first}. The sequence $(F^k)_{k\in \mathbb{N}}$ is defined by $F^{k+1} := F \circ F^k$ for all $k\in \mathbb{N}$ where $F^0:\mathbb{R}^n\rightarrow\mathbb{R}^n$ is the identity. In order to prove Proposition \ref{prop:escapesaddle} below, and ultimately Theorem \ref{thm:local_min}, we relax the assumption that $F(X)\subset X$ and instead only require that $x_k \in X$ for all $k\in \mathbb{N}$. Below, we let $\mu(\cdot)$ and $\rho(\cdot)$ denote the Lebesgue measure and the spectral radius respectively.
 
\begin{lemma}\label{lem:unstable}
If $F\in C^1(\mathbb{R}^n,\mathbb{R}^n)$ and $X$ is an open subset of $\mathbb{R}^n$
such that $\mathrm{rank}(F'(x)) = n$ for all $x\in X$, then
\begin{equation*}
    \mu\left(\left\{x\in \mathbb{R}^n: \forall k \in \mathbb{N},~ F^k(x) \in X,~ \lim_{k\to\infty}F^k(x)\in Y\right\}\right)=0,
\end{equation*}
where $Y := \{ x \in X: F(x) = x, ~ \rho(F'(x)) > 1\}$.
\end{lemma}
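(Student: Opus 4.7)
The plan is to combine the center-stable manifold theorem with a Lindelöf covering argument and a preimage-measure argument that exploits the rank hypothesis. Let $A$ denote the set whose measure the lemma claims to be zero.

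For each $y \in Y$, the condition $\rho(F'(y)) > 1$ forces the unstable eigenspace of $F'(y)$ to be nontrivial, so the center-stable eigenspace has dimension at most $n-1$. The center-stable manifold theorem (e.g., \cite[Theorem III.7]{shub2013global}) then yields an open ball $B(y, r_y) \subset X$ (shrinking $r_y$ so this inclusion holds is possible since $X$ is open) and a $C^1$ embedded submanifold $W^{cs}(y)$ through $y$ of dimension at most $n-1$, with the trapping property that any $z \in B(y, r_y)$ whose forward orbit $(F^k(z))_{k\geqslant 0}$ never leaves $B(y, r_y)$ must lie on $W^{cs}(y)$. In particular, $W^{cs}(y)$ is Lebesgue-null. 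The family $\{B(y, r_y/2)\}_{y \in Y}$ is an open cover of $Y$, and Lindelöf's theorem yields a countable subcover indexed by $(y_i)_{i \in \mathbb{N}} \subset Y$. For any $x \in A$, setting $y^* := \lim_{k \to \infty} F^k(x) \in Y$, pick $i$ with $y^* \in B(y_i, r_{y_i}/2)$ and $K \in \mathbb{N}$ large enough that $\|F^k(x) - y^*\| < r_{y_i}/2$ for every $k \geqslant K$. The triangle inequality then shows that the forward orbit of $F^K(x)$ is trapped in $B(y_i, r_{y_i})$, hence $F^K(x) \in W^{cs}(y_i)$, which gives the containment $A \subset \bigcup_{i,K \in \mathbb{N}} F^{-K}(W^{cs}(y_i))$.

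To finish, introduce $X_K := \bigcap_{j=0}^{K-1} F^{-j}(X)$; every point of $A$ belongs to $X_K$ for all $K$ by definition of $A$. The rank hypothesis combined with the inverse function theorem makes $F\vert_X$ a local $C^1$ diffeomorphism, so covering $X$ by countably many open sets on which $F$ is a diffeomorphism onto its image and using the fact that $C^1$ maps take Lebesgue-null sets to Lebesgue-null sets (via local Lipschitz bounds) shows that $F^{-1}(N) \cap X$ is null whenever $N \subset \mathbb{R}^n$ is null. Induction on $K$ upgrades this to the statement that $F^{-K}(N) \cap X_K$ is null for any null $N$. Applying this with $N = W^{cs}(y_i)$, each term in the countable union $\bigcup_{i,K}\bigl(F^{-K}(W^{cs}(y_i)) \cap X_K\bigr)$ is null, so $A$ has measure zero by countable subadditivity.

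The main obstacle is invoking the correct formulation of the center-stable manifold theorem — one that delivers the trapping characterization (every forward-invariant orbit in a small neighborhood lies on the local center-stable manifold), as opposed to merely asserting the existence of a $C^1$ invariant manifold tangent to the center-stable subspace. A secondary subtlety is that $F$ need not map $X$ into itself, so one must carry the sets $X_K$ through the induction rather than simply pulling back under a global diffeomorphism; however, this is handled by the straightforward induction above once the local-diffeomorphism property on $X$ is established.
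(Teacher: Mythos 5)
Your proof is correct and follows essentially the same route as the paper: the center-stable manifold theorem to obtain local trapping sets of measure zero at each unstable fixed point, a Lindel\"of covering, the containment of the bad set in a countable union of iterated preimages, and an induction that pulls nullity back through $F^{-1}$ on $X$ using the rank hypothesis. The only notable difference is cosmetic: where the paper invokes a result of Ponomarev on preimages under submersions, you establish null-set preservation directly via the inverse function theorem, a countable cover of $X$ by local diffeomorphism patches, and local Lipschitz bounds for $F^{-1}$; and you thread the sets $X_K = \bigcap_{j=0}^{K-1}F^{-j}(X)$ through the induction where the paper uses the set-valued iterates $V_k = (F|_X)^{-1}\circ V_{k-1}$, but these are structurally equivalent.
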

\begin{proof}
    Since $F\in C^1(\mathbb{R}^n,\mathbb{R}^n)$ and $\mathrm{rank}(F'(x)) = n$ for all $x\in X$, by the inverse function theorem \cite[Theorem 1 p. 498]{zorich2004mathematical} $F$ is a local diffeomorphism over $X$. By the center-stable manifold theorem \cite[Theorem III.7(2) p. 65]{shub2013global}, for all $y\in Y$, there exists an open neighborhood $B_y$ of $y$ such that its associated local center stable manifold $W^\mathrm{sc}_\mathrm{\rm loc}(y):= \{x\in \mathbb{R}^n: \forall k \in \mathbb{N}, F^k(x) \in B_y\}$ has Lebesgue measure zero. Since $\{B_y:y\in Y\}$ is an open cover of $Y$, by Lindel\"of’s lemma \cite[Theorem 30.3(a)]{munkrestopology} there exists $\{y_i\}_{i\in\mathbb{N}}\subset Y$ such that 
    $Y \subset \cup_{i=0}^\infty B_{y_i}$.
    
    We seek to show that the set
    \begin{equation*}
    W:=\left\{x\in \mathbb{R}^n: \forall k \in \mathbb{N},~ F^k(x) \in X,~ \lim_{k\to\infty}F^k(x)\in Y\right\}    
    \end{equation*}
    has Lebesgue measure zero. In order to do so, we consider the sequence $V_0,V_1,V_2,\hdots : Y \rightrightarrows X$ defined by  $V_0(\cdot):=W^\mathrm{sc}_\mathrm{\rm loc}(\cdot)\cap X$ and $V_{k+1}:=(F_{|X})^{-1}\circ V_k$ for all $k\in\mathbb{N}$ where $F_{|X}$ denotes the restriction of $F$ to $X$. We will show that 
    \begin{equation*}
        W \subset \bigcup_{i=0}^\infty\bigcup_{k=0}^\infty V_k(y_i).
    \end{equation*}
    It is then easy to show by induction that $\mu(V_k(y_i)) = 0$ for all $k,i\in \mathbb{N}$. Indeed, on the one hand $\mu(V_0(y_i))\leqslant \mu(W^\mathrm{sc}_\mathrm{\rm loc}(y_i))=0$. On the other hand, if $\mu(V_k(y_i))=0$, then by \cite[Theorem 1]{ponomarev1987submersions} $\mu(V_{k+1}(y_i))=\mu((F_{|X})^{-1}(V_k(y_i)))=0$ since $\mathrm{rank}(F'(x)) = n$ for all $x\in X$. We conclude that
    \begin{equation*}
        \mu(W)\leqslant \sum_{i=0}^\infty\sum_{k=0}^\infty\mu(V_k(y_i))=0.
    \end{equation*}
    
    Let $x\in W$ and $y:=\lim_{k\to\infty}F^k(x)$. Since $y \in Y \subset \cup_{i=0}^\infty B_{y_i}$, there exists $j\in\mathbb{N}$ such that $y\in B_{y_j}$. Since $B_{y_j}$ is open, there exists $K\in \mathbb{N}$ such that $F^k(x)\in B_{y_j}$ for all $k\geqslant K$, or equivalently, $F^k(F^K(x))\in B_{y_j}$ for all $k\in\mathbb{N}$. Thus $F^K(x)\in W^\mathrm{sc}_\mathrm{\rm loc}(y_j)$ and in fact $F^K(x)\in W^\mathrm{sc}_\mathrm{\rm loc}(y_j)\cap X = V_0(y_j)$. Since $x\in W$ and $F^K(x)\in V_0(y_j)$, we have $F^{K-1}(x)\in F^{-1}(F^K(x)) \cap X= (F_{|X})^{-1}(F^K(x))\subset  (F_{|X})^{-1}(V_0(y_j))=V_1(y_j)$. By induction, it follows that $x\in V_K(y_j)\subset \cup_{i=0}^\infty\cup_{k=0}^\infty V_k(y_i)$. 
\end{proof}

Given an objective function $f\in C^2(\mathbb{R}^n)$ with an $L$-Lipschitz continuous gradient, the momentum method \eqref{eq:momentum} does not converge to any critical point whose Hessian has a negative eigenvalue for almost every initial point if $\alpha\in(0,2(1-\beta)/L)$, $\beta\in(0,1)$ and $\gamma=0$ \cite[Lemma 2]{sun2019heavy}, or $\alpha\in(0,4/L)$, $\beta\in(\max\{0,-1+\alpha L/2\},1)$ and $\gamma=0$ \cite[Theorem 3]{o2019behavior}. In order to prove Theorem \ref{thm:local_min}, we enlarge the set of allowable momentum parameters. Below, we let $\lambda_\mathrm{min}(\cdot)$ denote the minimal real eigenvalue of a matrix.

\begin{proposition}\label{prop:escapesaddle}
    Let $f\in C^{2}(\mathbb{R}^n)$, $X \subset \mathbb{R}^n$ be bounded, $\beta\in(-1,1)\setminus\{0\}$, and $\gamma\in\mathbb{R}$. There exists $\bar{\alpha}>0$ such that for all $\alpha\in(0,\bar{\alpha}]$ and for almost every $(x_{-1},x_0)\in \mathbb{R}^{2n}$, the limit of any convergent sequence $x_{-1},x_0,x_1,\hdots \in X$ generated by the momentum method \eqref{eq:momentum} does not belong to 
    \begin{equation*}
        C^-:= \{ x \in \mathbb{R}^n: \nabla f(x) = 0, \lambda_\mathrm{min}(\nabla^2 f(x)) < 0 \}.
    \end{equation*}
\end{proposition}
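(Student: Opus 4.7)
The plan is to reformulate the momentum method as a fixed-point iteration on the enlarged state $z_k := (x_k, x_{k-1}) \in \mathbb{R}^{2n}$ and then invoke Lemma \ref{lem:unstable} to rule out convergence to strict saddles for almost every initial pair. Explicitly, I would define
\begin{equation*}
    F_\alpha(x,y) := \bigl(x + \beta(x-y) - \alpha\nabla f(x + \gamma(x-y)),\; x\bigr),
\end{equation*}
so that $z_{k+1} = F_\alpha(z_k)$. The fixed points of $F_\alpha$ are exactly the diagonal pairs $(x^*,x^*)$ with $\nabla f(x^*) = 0$, and whenever a sequence $x_{-1},x_0,x_1,\ldots \in X$ converges to some $x^* \in C^-$, the lifted orbit $z_k$ converges to $(x^*,x^*)$ in $\overline{X}\times\overline{X}$.

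To invoke Lemma \ref{lem:unstable} I would fix an open bounded set $\tilde X \supset \overline X$ (for instance an $\varepsilon$-neighborhood of $\overline X$) and verify its hypotheses on $\tilde X \times \tilde X$. Since $f \in C^2$, one has $F_\alpha \in C^1$. Exploiting the identity block in the bottom row of $F_\alpha'(x,y)$ via a block row swap yields
\begin{equation*}
    \det F_\alpha'(x,y) = \pm \det\bigl(-\beta I + \alpha\gamma\nabla^2 f(x+\gamma(x-y))\bigr),
\end{equation*}
and because $\beta\neq 0$ and $\nabla^2 f$ is bounded on the compact closure of $\tilde X + \gamma(\tilde X - \tilde X)$, taking $\bar\alpha>0$ small enough keeps this determinant away from zero on all of $\tilde X \times \tilde X$, so $F_\alpha$ is a local diffeomorphism there.

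The heart of the argument is to show $\rho(F_\alpha'(x^*,x^*))>1$ at every fixed point with $x^*\in C^-$. Writing $H := \nabla^2 f(x^*)$ and using that $H$ is symmetric, the matrix $F_\alpha'(x^*,x^*)$ decouples in an eigenbasis of $H$ into $n$ invariant two-dimensional blocks, one per eigenvalue $\mu$ of $H$, with characteristic polynomial
\begin{equation*}
    p_\mu(\lambda) := \lambda^2 - \bigl((1+\beta) - \alpha(1+\gamma)\mu\bigr)\lambda + (\beta - \alpha\gamma\mu).
\end{equation*}
A direct substitution gives the key identity $p_\mu(1) = \alpha\mu$. Hence, if $\mu<0$, then $p_\mu(1) < 0$ while $p_\mu(\lambda)\to +\infty$ as $\lambda\to+\infty$, and the intermediate value theorem produces a real root $\lambda_+>1$, establishing $\rho(F_\alpha'(x^*,x^*)) > 1$.

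With these ingredients in hand, I would apply Lemma \ref{lem:unstable} to $F_\alpha$ on the open set $\tilde X\times\tilde X$ with
\begin{equation*}
    Y := \{(x^*,x^*) : x^* \in C^- \cap \overline X\} \subset \tilde X\times\tilde X,
\end{equation*}
which consists entirely of fixed points whose spectral radius exceeds one. The lemma then asserts that the set of initial states $(x_0, x_{-1})\in \mathbb{R}^{2n}$ whose forward orbit remains in $\tilde X\times\tilde X$ and converges into $Y$ has Lebesgue measure zero; every $X$-valued orbit of \eqref{eq:momentum} converging to a point of $C^-$ lies in this set, and the coordinate swap $(x_0,x_{-1})\leftrightarrow(x_{-1},x_0)$ preserves Lebesgue measure, so the conclusion follows. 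The main obstacle I expect is the spectral bookkeeping: one must carefully argue that every eigenvalue of the $2n\times 2n$ block matrix $F_\alpha'(x^*,x^*)$ comes from one of the reduced quadratics $p_\mu$, a fact that ultimately rests on the orthogonal diagonalizability of the symmetric Hessian $H$.
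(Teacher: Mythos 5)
Your proposal is correct and follows essentially the same route as the paper: lift to $z_k=(x_k,x_{k-1})$ and $F_\alpha$, verify $\det F_\alpha' \ne 0$ on $\tilde X\times\tilde X$ (the paper does this via Silvester's block-determinant formula and the explicit threshold $\bar\alpha = |\beta|/(1+|\gamma|M)$, which matches your "small enough $\bar\alpha$"), reduce the spectral-radius check at a fixed point to $n$ scalar quadratics $p_\mu$ via the orthogonal eigenbasis of the symmetric Hessian, exploit the identity $p_\mu(1)=\alpha\mu<0$, and then invoke Lemma \ref{lem:unstable}. The spectral bookkeeping you flag as a potential obstacle is handled in the paper exactly as you suggest, by applying Silvester's theorem to the characteristic polynomial $\det(\lambda I_{2n}-F'_\alpha(x^*,x^*))$ and conjugating by $P\oplus P$, so no gap remains.
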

\begin{proof}
     Since $X$ is bounded, there exists an open bounded set $\widetilde{X}$ such that $\overline{X}\subset\widetilde{X}$. Let
     $M:=\sup \{\rho(\nabla^2f(x)): x\in \widetilde{X}+\gamma(\widetilde{X}-\widetilde{X})\}<\infty$, $\bar{\alpha}:= |\beta|/(1+|\gamma| M)$, and $\alpha \in (0,\bar{\alpha}]$. Any sequence $x_{-1},x_0,x_1,\hdots \in \mathbb{R}^n$ generated by the momentum method \eqref{eq:momentum} follows the update rule $z_{k+1}=F(z_k)$ for all $k\in\mathbb{N}$ where $z_k:=(x_k,x_{k-1})$ and $F:\mathbb{R}^{2n}\rightarrow\mathbb{R}^{2n}$ is defined by
    \begin{equation*}
      F(x,y) := \begin{pmatrix}
            x +\beta(x-y) - \alpha\nabla f(x+\gamma(x-y)) \\ x
        \end{pmatrix}
    \end{equation*}
    for all $(x,y)\in \mathbb{R}^n \times \mathbb{R}^n$. In order to prove the desired result, we claim that it suffices to check the two following facts:
    \begin{enumerate}
        \item $\mathrm{rank}(F'(z)) = 2n$ for all $z\in \widetilde{X}\times \widetilde{X}$, 
        \item $\{(x,x)\in\mathbb{R}^{2n}:x \in C^-\}\subset Z$,
    \end{enumerate}
    where 
    \begin{equation*}
        Z := \{z\in \mathbb{R}^{2n} : F(z) = z, ~ \rho(F'(z)) > 1\}.
    \end{equation*}
    Indeed, by applying Lemma \ref{lem:unstable} to $F\in C^1(\mathbb{R}^{2n},\mathbb{R}^{2n})$ and the open subset $\widetilde{X}\times \widetilde{X}$ of $\mathbb{R}^{2n}$, it follows that
    for almost every $(x_0,x_{-1}) \in \mathbb{R}^{2n}$, the limit of any convergent sequence $(x_0,x_{-1}), (x_1,x_{0}), \ldots \in X\times X$ such that $F(x_k,x_{k-1})=F(x_{k+1},x_k)$ for all $k \in \mathbb{N}$ does not belong to $Y$ where 
    \begin{align*}
        Y & := Z\cap (\widetilde{X}\times \widetilde{X}) \\
          & \supset \{(x,x)\in\mathbb{R}^{2n}:x \in C^-\} \cap (\widetilde{X}\times \widetilde{X}) \\
          & = \{(x,x)\in\mathbb{R}^{2n}:x \in C^-\cap \widetilde{X}\}.  
    \end{align*}
    In particular, for almost every $(x_0,x_{-1}) \in \mathbb{R}^{2n}$, the limit of any convergent sequence $x_{-1},x_0,x_1, \ldots \in X$ generated by the momentum method \eqref{eq:momentum} does not belong to $C^-\cap \widetilde{X}$. Since $\overline{X}\subset \widetilde{X}$, such a limit must belong to $\widetilde{X}$, and thus does not belong to $C^-$.
    
    We next prove the two facts above. First, for any $(x,y) \in \widetilde{X}\times \widetilde{X}$, we have
    {\small\begin{equation*}
        F'(x,y) = 
        \begingroup 
        \setlength\arraycolsep{5pt}
        \begin{pmatrix}
            (1+\beta)I_n-\alpha(1+\gamma)\nabla^2f(x+\gamma(x-y)) & -\beta I_n +\alpha\gamma\nabla^2f(x+\gamma(x-y)) \\
            I_n & 0
        \end{pmatrix}
        \endgroup
    \end{equation*}}%
where $I_n \in \mathbb{R}^{n\times n}$ is the identity matrix. Since $|\beta| \geqslant \alpha(1+|\gamma|M) > \alpha|\gamma| M$, by \cite[Theorem 3]{silvester2000determinants} for all $(x,y) \in \widetilde{X} \times \widetilde{X}$ we have
    \begin{equation*}
        \mathrm{det}(F'(x,y)) = \mathrm{det}(\beta I_n -\alpha\gamma\nabla^2f(x+\gamma(x-y))) \neq 0
    \end{equation*}
and thus $\mathrm{rank}(F'(x,y)) = 2n$. Second, let $x \in C^-$ and $z := (x,x)$. We seek to show that $z \in Z$. Since $\nabla f(x) = 0$, we have $F(z) = F(x,x) = (x,x) = z$.
    Since $f \in C^2(\mathbb{R}^n)$, $\nabla^2f(x)$ is symmetric and therefore admits an eigendecomposition $\nabla^2f(x)=PDP^{\top}$ where $D=\mathrm{diag}(d_1,\hdots,d_n)$ and $P$ is an orthogonal matrix. Again by \cite[Theorem 3]{silvester2000determinants}, we have  
    \begin{align*}
        \mathrm{det}(\lambda I_{2n}-F'(x,x)) &= \mathrm{det}([\lambda^2-(1+\beta)\lambda+\beta]I_n - [\alpha\gamma-\lambda\alpha(1+\gamma)]\nabla^2f(x)) \\
        &= \mathrm{det}([\lambda^2-(1+\beta)\lambda+\beta]I_n - [\alpha\gamma-\lambda\alpha(1+\gamma)]PDP^{\top}) \\
        &= \mathrm{det}([\lambda^2-(1+\beta)\lambda+\beta]I_n - [\alpha\gamma-\lambda\alpha(1+\gamma)]D)\\
        &= \prod_{i=1}^n ([\lambda^2-(1+\beta)\lambda+\beta] - [\alpha\gamma-\lambda\alpha(1+\gamma)]d_i)\\
        &= \prod_{i=1}^n (\underbrace{\lambda^2+[\alpha(1+\gamma)d_i-(1+\beta)]\lambda+\beta-\alpha\gamma d_i}_{\varphi_i(\lambda):=}).
    \end{align*}
    Since $x\in C^-$, there exists $j\in\{1,\hdots,n\}$ such that $d_j<0$. Since $\varphi_j$ is a quadratic function whose leading coefficient is positive and $\varphi_j(1) = \alpha d_j < 0$, $\varphi_j$ has a root that is greater than 1. Thus $\rho(F'(z))>1$.  
\end{proof}

We are now ready to prove Theorem \ref{thm:local_min}. Let $f\in C^2(\mathbb{R}^n)$ be a semialgebraic function with bounded continuous gradient trajectories. Let $\beta \in (-1,1) \setminus \{0\}$, $\gamma \in \mathbb{R}$, and $\delta \geqslant 0$. Assume that the Hessian of $f$ has a negative eigenvalue at all critical points of $f$ that are not local minimizers. Let $X_0$ be a bounded subset of $\mathbb{R}^n$. By Theorem \ref{thm:convergence}, there exist $\bar{\alpha},c>0$ such that for all $\alpha \in (0,\bar{\alpha}]$, there exists $c_\alpha>0$ such that any sequence $x_{-1},x_0,x_1,\hdots \in \mathbb{R}^n$ generated by the momentum method \eqref{eq:momentum} that satisfies $x_0 \in X_0$ and $\|x_0-x_{-1}\| \leqslant \delta \alpha$ obeys 
\begin{equation*}
        \sum_{i=0}^{\infty} \|x_{i+1}-x_i\| \leqslant c ~~~ \text{and} ~~~ \min_{i=0,\ldots,k}\|\nabla f(x_i)\| \leqslant \frac{c_\alpha}{k+1}, ~ \forall k\in \mathbb{N}.
\end{equation*}
It hence converges to a critical point of $f$ and belongs to the bounded set $B(X_0,c)$. By Proposition \ref{prop:escapesaddle}, after possibly reducing $\bar{\alpha}>0$, for all $\alpha\in(0,\bar{\alpha}]$ and for almost every $(x_{-1},x_0)\in \mathbb{R}^{2n}$, the limit of any convergent sequence $x_{-1},x_0,x_1,\hdots \in B(X_0,c)$ generated by the momentum method \eqref{eq:momentum} is not a critical point of $f$ where the Hessian admits a negative eigenvalue. We conclude for all $\alpha \in (0,\bar{\alpha}]$ and for almost every 
$(x_{-1},x_0) \in \mathbb{R}^n \times X_0$, any sequence $x_{-1},x_0,x_1,\hdots \in \mathbb{R}^n$ generated by the momentum method \eqref{eq:momentum} 
that satisfies $\|x_0-x_{-1}\| \leqslant \delta \alpha$ 
converges to a local minimizer of $f$.

\bibliographystyle{abbrv}    
\bibliography{mybib}

\begin{thebibliography}{10}

\bibitem{abadi2016tensorflow}
M.~Abadi.
\newblock Tensorflow: learning functions at scale.
\newblock In {\em Proceedings of the 21st ACM SIGPLAN International Conference
  on Functional Programming}, pages 1--1, 2016.

\bibitem{absil2005convergence}
P.-A. Absil, R.~Mahony, and B.~Andrews.
\newblock Convergence of the iterates of descent methods for analytic cost
  functions.
\newblock {\em SIAM Journal on Optimization}, 16(2):531--547, 2005.

\bibitem{attouch2013convergence}
H.~Attouch, J.~Bolte, and B.~F. Svaiter.
\newblock {Convergence of descent methods for semi-algebraic and tame problems:
  proximal algorithms, forward--backward splitting, and regularized
  Gauss--Seidel methods}.
\newblock {\em Mathematical Programming}, 137(1):91--129, 2013.

\bibitem{attouch2014variational}
H.~Attouch, G.~Buttazzo, and G.~Michaille.
\newblock {\em Variational analysis in Sobolev and BV spaces: applications to
  PDEs and optimization}.
\newblock SIAM, 2014.

\bibitem{bochnak2013real}
J.~Bochnak, M.~Coste, and M.-F. Roy.
\newblock {\em Real algebraic geometry}, volume~36.
\newblock Springer Science \& Business Media, 2013.

\bibitem{bolte2007clarke}
J.~Bolte, A.~Daniilidis, A.~Lewis, and M.~Shiota.
\newblock Clarke subgradients of stratifiable functions.
\newblock {\em SIAM Journal on Optimization}, 18(2):556--572, 2007.

\bibitem{bolte2014proximal}
J.~Bolte, S.~Sabach, and M.~Teboulle.
\newblock Proximal alternating linearized minimization for nonconvex and
  nonsmooth problems.
\newblock {\em Mathematical Programming}, 146(1-2):459--494, 2014.

\bibitem{candes2005decoding}
E.~J. Candes and T.~Tao.
\newblock Decoding by linear programming.
\newblock {\em IEEE transactions on information theory}, 51(12):4203--4215,
  2005.

\bibitem{clarke1990}
F.~H. Clarke.
\newblock {\em Optimization and Nonsmooth Analysis}.
\newblock SIAM Classics in Applied Mathematics, 1990.

\bibitem{coddington1955theory}
E.~A. Coddington and N.~Levinson.
\newblock {\em Theory of ordinary differential equations}.
\newblock Tata McGraw-Hill Education, 1955.

\bibitem{garrigos2015descent}
G.~Garrigos.
\newblock {\em Descent dynamical systems and algorithms for tame optimization,
  and multi-objective problems}.
\newblock PhD thesis, Universit{\'e} Montpellier; Universidad t{\'e}cnica
  Federico Santa Mar{\'\i}a (Valparaiso), 2015.

\bibitem{ghadimi2016accelerated}
S.~Ghadimi and G.~Lan.
\newblock Accelerated gradient methods for nonconvex nonlinear and stochastic
  programming.
\newblock {\em Mathematical Programming}, 156(1-2):59--99, 2016.

\bibitem{horn2012matrix}
R.~A. Horn and C.~R. Johnson.
\newblock {\em Matrix analysis}.
\newblock Cambridge university press, 2012.

\bibitem{jia2019inexact}
Z.~Jia, Z.~Wu, and X.~Dong.
\newblock An inexact proximal gradient algorithm with extrapolation for a class
  of nonconvex nonsmooth optimization problems.
\newblock {\em Journal of Inequalities and Applications}, 2019(1):1--16, 2019.

\bibitem{josz2023global}
C.~Josz.
\newblock Global convergence of the gradient method for functions definable in
  o-minimal structures.
\newblock {\em Mathematical Programming}, pages 1--29, 2023.

\bibitem{josz2023certifying}
C.~Josz and X.~Li.
\newblock Certifying the absence of spurious local minima at infinity.
\newblock {\em SIAM Journal on Optimization}, 33:1416--1439, 2023.

\bibitem{kovachki2021continuous}
N.~B. Kovachki and A.~M. Stuart.
\newblock Continuous time analysis of momentum methods.
\newblock {\em Journal of Machine Learning Research}, 22(17):1--40, 2021.

\bibitem{kurdyka1998gradients}
K.~Kurdyka.
\newblock On gradients of functions definable in o-minimal structures.
\newblock In {\em Annales de l'institut Fourier}, volume~48, pages 769--783,
  1998.

\bibitem{lee2019first}
J.~D. Lee, I.~Panageas, G.~Piliouras, M.~Simchowitz, M.~I. Jordan, and
  B.~Recht.
\newblock First-order methods almost always avoid strict saddle points.
\newblock {\em Mathematical programming}, 176(1):311--337, 2019.

\bibitem{lee2016}
J.~D. Lee, M.~Simchowitz, M.~I. Jordan, and B.~Recht.
\newblock {Gradient Descent Only Converges to Minimizers}.
\newblock {\em COLT}, 2016.

\bibitem{li2018calculus}
G.~Li and T.~K. Pong.
\newblock {Calculus of the exponent of Kurdyka--{\L}ojasiewicz inequality and
  its applications to linear convergence of first-order methods}.
\newblock {\em Foundations of computational mathematics}, 18(5):1199--1232,
  2018.

\bibitem{law1965ensembles}
S.~\L{}ojasiewicz.
\newblock Ensembles semi-analytiques.
\newblock {\em IHES notes}, 1965.

\bibitem{munkrestopology}
J.~R. Munkres.
\newblock Topology.
\newblock {\em Prenctice Hall, US}, 2000.

\bibitem{nesterov2018introductory}
Y.~Nesterov.
\newblock {\em Lectures on Convex Optimization}, volume 137.
\newblock Springer Science \& Business Media, 2018.

\bibitem{ochs2018local}
P.~Ochs.
\newblock Local convergence of the heavy-ball method and ipiano for non-convex
  optimization.
\newblock {\em Journal of Optimization Theory and Applications},
  177(1):153--180, 2018.

\bibitem{ochs2014ipiano}
P.~Ochs, Y.~Chen, T.~Brox, and T.~Pock.
\newblock ipiano: Inertial proximal algorithm for nonconvex optimization.
\newblock {\em SIAM Journal on Imaging Sciences}, 7(2):1388--1419, 2014.

\bibitem{o2019behavior}
M.~O’Neill and S.~J. Wright.
\newblock Behavior of accelerated gradient methods near critical points of
  nonconvex functions.
\newblock {\em Mathematical Programming}, 176(1):403--427, 2019.

\bibitem{paszke2017automatic}
A.~Paszke, S.~Gross, S.~Chintala, G.~Chanan, E.~Yang, Z.~DeVito, Z.~Lin,
  A.~Desmaison, L.~Antiga, and A.~Lerer.
\newblock Automatic differentiation in pytorch.
\newblock In {\em NIPS 2017 Workshop on Autodiff}, 2017.

\bibitem{paszke2019pytorch}
A.~Paszke, S.~Gross, F.~Massa, A.~Lerer, J.~Bradbury, G.~Chanan, T.~Killeen,
  Z.~Lin, N.~Gimelshein, L.~Antiga, et~al.
\newblock Pytorch: An imperative style, high-performance deep learning library.
\newblock {\em Advances in neural information processing systems}, 32, 2019.

\bibitem{pemantle1990nonconvergence}
R.~Pemantle.
\newblock Nonconvergence to unstable points in urn models and stochastic
  approximations.
\newblock {\em The Annals of Probability}, 18(2):698--712, 1990.

\bibitem{pham2016genericity}
T.~S. Pham and H.~H. Vui.
\newblock {\em Genericity in polynomial optimization}, volume~3.
\newblock World Scientific, 2016.

\bibitem{polyak1964some}
B.~T. Polyak.
\newblock Some methods of speeding up the convergence of iteration methods.
\newblock {\em USSR Computational Mathematics and Mathematical Physics},
  4(5):1--17, 1964.

\bibitem{ponomarev1987submersions}
S.~P. Ponomarev.
\newblock Submersions and preimages of sets of measure zero.
\newblock {\em Siberian Mathematical Journal}, 28(1):153--163, 1987.

\bibitem{shub2013global}
M.~Shub.
\newblock {\em Global stability of dynamical systems}.
\newblock Springer Science \& Business Media, 2013.

\bibitem{silvester2000determinants}
J.~R. Silvester.
\newblock Determinants of block matrices.
\newblock {\em The Mathematical Gazette}, 84(501):460--467, 2000.

\bibitem{sun2019heavy}
T.~Sun, D.~Li, Z.~Quan, H.~Jiang, S.~Li, and Y.~Dou.
\newblock Heavy-ball algorithms always escape saddle points.
\newblock {\em IJCAI}, 2019.

\bibitem{van1998tame}
L.~Van~den Dries.
\newblock {\em Tame topology and o-minimal structures}, volume 248.
\newblock Cambridge university press, 1998.

\bibitem{wen2017linear}
B.~Wen, X.~Chen, and T.~K. Pong.
\newblock Linear convergence of proximal gradient algorithm with extrapolation
  for a class of nonconvex nonsmooth minimization problems.
\newblock {\em SIAM Journal on Optimization}, 27(1):124--145, 2017.

\bibitem{zavriev1993heavy}
S.~Zavriev and F.~Kostyuk.
\newblock Heavy-ball method in nonconvex optimization problems.
\newblock {\em Computational Mathematics and Modeling}, 4(4):336--341, 1993.

\bibitem{zorich2004mathematical}
V.~Zorich and R.~Cooke.
\newblock {\em Mathematical Analysis I}.
\newblock Mathematical Analysis. Springer, 2004.

\end{thebibliography}

\end{document}